\newfont{\Fr}{eufm10}
\newfont{\Sc}{eusm10}
\newfont{\Bb}{msbm10}
\newfont{\Am}{msam10}
\newfont{\am}{msam7}
\numberwithin{equation}{section}
\newtheorem{thm}{Theorem}[section]
\newtheorem{prop}[thm]{Proposition}
\newtheorem{lem}[thm]{Lemma}
\newtheorem{cor}[thm]{Corollary}
\newtheorem{claim}{Claim}{\bf}{\it}
\newtheorem{fthm}{Theorem}{\bf}{\it}
{\bf}{\it}
\newtheorem{fcor}[fthm]{Corollary}{\bf}{\it}
\theoremstyle{definition}
\newtheorem{defn}[thm]{Definition}
\theoremstyle{remark}
\newtheorem{definition and corollary}[thm]{Definition and Corollary}
\newcommand{\A}{{\mathbb A}}
\newcommand{\C}{{\mathbb C}}
\newcommand{\cO}{{\mathcal O}}
\newcommand{\bK}{{\mathbb K}}
\newcommand{\Hom}{\mbox{\rm Hom}}
\newcommand{\Sym}{\mbox{\Fr S}}
\newcommand{\g}{\mathfrak{g}}
\newcommand{\gb}{\mathfrak{b}}
\newcommand{\h}{\mathfrak{h}}
\newcommand{\gI}{\mathfrak{I}}
\newcommand{\gp}{\mathfrak{p}}
\newcommand{\gu}{\mathfrak{u}}
\renewcommand{\P}{\mathbb{P}}
\newcommand{\sQ}{\mathscr{Q}}
\newcommand{\Q}{\mathbb{Q}}
\newcommand{\R}{\mathbb{R}}
\newcommand{\Z}{\mathbb{Z}}
\newcommand{\Gm}{\mathbb G_m}
\title{Demazure character formula for semi-infinite flag varieties}
\author{Syu \textsc{Kato} \footnote{Department of Mathematics, Kyoto University, Oiwake Kita-Shirakawa Sakyo Kyoto 606-8502 JAPAN \tt{E-mail:syuchan@math.kyoto-u.ac.jp}} \footnote{Research supported in part by JSPS Grant-in-Aid for Scientific Research (B) 26287004 and Kyoto University Jung-Mung program}\footnote{This is a post-peer-review, pre-copyedit version of an article published in {\it Mathematische Annalen}. The final authenticated version is available online at: http://dx.doi.org/10.1007/s00208-018-1652-5}}
\begin{document}
\maketitle

\begin{abstract}
We prove that every Schubert variety of a semi-infinite flag variety is projectively normal. This gives us an interpretation of a Demazure module of a global Weyl module of a current Lie algebra as the (dual) space of global sections of a line bundle on a semi-infinite Schubert variety. Moreover, we give geometric realizations of Feigin-Makedonskyi's generalized Weyl modules, and the $t = \infty$ specialization of non-symmetric Macdonald polynomials.
\end{abstract}

\section*{Introduction}
Semi-infinite flag variety is a variant of affine flag variety that encodes representation theory of affine Lie algebras \cite{FF}. It also admits an interpretation as the space of rational maps, and therefore plays a role in the computation of quantum $K$-theory of flag varieties. This latter direction was pursued by a series of papers by Braverman-Finkelberg \cite{BF14a,BF14b,BF14c}, that leads to the proof of fundamental properties such as normality, rationality of its singularities, an analogue of the Borel-Weil theorem, the computation of quantum $J$-functions (extending the work of Givental-Lee \cite{GL03}), and its connection with $q$-Whittaker functions.

The aim of this paper is two-fold: one is to extend Braverman-Finkelberg's cohomology formula of line bundles to include some naturally twisted sheaves, and the other is to generalize their results to all Schubert varieties so that the situation becomes more satisfactory from representation-theoretic view-points. It turns out that such an extension provides a natural realization of certain specializations of non-symmetric Macdonald polynomials, together with difference equations characterizing them, generalizing their links to the representation theory of current algebras as discovered by Braverman-Finkelberg \cite{BF14b}, Lenart-Naito-Sagaki-Schilling-Shimozono \cite{LNSSS1, LNSSS2, LNSSS3}, Cherednik-Orr \cite{CO}, Naito-Nomoto-Sagaki \cite{NNS}, and Feigin-Makedonskyi \cite{FM}.

To explain what we mean by this, we introduce more notation: Let $G$ be a simply-connected simple algebraic group, let $W$ be its Weyl group with the set $\{s_i\}_{i \in \mathtt I}$ of simple reflections, let $\Lambda$ be the weight lattice, and let $\Lambda _+$ be the set of dominant weights. Let $Q^{\vee}$ be the coroot lattice of $G$. Then, we have the space $\sQ$ of rational maps from $\P^1$ to $G / B$, and its subspace $\sQ ( w )$ formed as the closure of the set of rational maps whose value at $0$ lands on a Schubert variety corresponding to $w \in W$. They carry a natural line bundle $\cO ( \lambda )$ corresponding to each $\lambda \in \Lambda$. Associated to $G$, we have a current algebra $\g [z] := \mathrm{Lie} \, G \otimes _{\C} \C [z]$ and its Iwahori subalgebra $\gI$. The Lie algebra $\g [z]$ also possesses a natural representation $W ( \lambda )$ for each $\lambda \in \Lambda_+$, that is called a global Weyl module (we set $W ( \lambda ) := \{ 0 \}$ if $\lambda \in \Lambda \backslash \Lambda_+$). Kashiwara \cite{Kas05} defined its Demazure submodule $W ( \lambda )_w$ to be the cyclic $\gI$-submodule generated by a vector with weight $w \lambda \in \Lambda$ for each $w \in W$. As they are graded, we have their character $\mathrm{ch} \, W ( \lambda )_w$, valued in $\C (\!(q)\!) [\Lambda]$.

\begin{fthm}[$\doteq$ Theorem \ref{main} $+$ Theorem \ref{dmain}]\label{fmain} For each $\lambda \in \Lambda$ and $w \in W$, we have:
\begin{enumerate}
\item The indscheme $\sQ ( w )$ is normal, and projectively normal;
\item We have the following isomorphism as $\gI$-modules:
$$H^i ( \sQ ( w ), \mathcal O _{\sQ ( w )} ( \lambda ) ) ^* \cong \begin{cases} W ( \lambda )_w & (i=0, \lambda \in \Lambda_+)\\ \{0\} & (\text{otherwise}) \end{cases};$$
\item For each $i \in \mathtt I$ so that $s_i w > w$, we have $\mathrm{ch} \, W ( \lambda )_{s_i w} = D_i ( \mathrm{ch} \,  W ( \lambda )_w)$, where $D_{i}$ is a Demazure operator acting on $\C (\!(q)\!) [\Lambda]$;
\item We also have a Demazure operator $D_{t_{\beta}}$ for each $\beta \in Q^{\vee}$ so that $\left< \beta, w \alpha \right> > 0$ for every positive root $\alpha$, that are mutually commutative. We have
\begin{equation}
D_{t_{\beta}} ( \mathrm{ch} \,  W ( \lambda )_{w} ) = q ^{- \left< \beta, w \lambda \right>} \cdot \mathrm{ch} \,  W ( \lambda )_{w}.\label{fdif}
\end{equation}
\end{enumerate}
\end{fthm}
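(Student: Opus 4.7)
My overall strategy is to build a semi-infinite analogue of Bott--Samelson resolutions for $\sQ(w)$ and then to run the Demazure/Joseph argument on top of Braverman--Finkelberg's theory of the full ind-scheme $\sQ$ \cite{BF14a,BF14b,BF14c}, whose normality, projective normality, Borel--Weil isomorphism $H^0(\sQ,\cO(\lambda))^* \cong W(\lambda)$, and cohomology vanishing for $\lambda\in\Lambda_+$ I take as known. The semi-infinite Bruhat cells give a $\gI$-stratification of $\sQ$ indexed by $W_{\mathrm{aff}} = W \ltimes Q^\vee$, and $\sQ(w) \subset \sQ$ is the closure of the cell attached to $w\in W$.

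\emph{Normality (1).} For a reduced expression $w=s_{i_1}\cdots s_{i_\ell}$ in $W$, I would form
\[
Z_w := P_{i_1}\times^{B}\cdots\times^{B}P_{i_\ell}\times^{B}\sQ(e),
\]
where $\sQ(e)$ is the semi-infinite Schubert variety attached to $e\in W$, already normal and projectively normal by \cite{BF14a}. Then $Z_w$ is an iterated $\P^1$-bundle over $\sQ(e)$, hence normal, and the multiplication map $\pi_w: Z_w \to \sQ(w)$ is proper, $\gI$-equivariant, and birational. Establishing $(\pi_w)_*\cO_{Z_w} = \cO_{\sQ(w)}$ and $R^{>0}(\pi_w)_*\cO_{Z_w}=0$ reduces, fibrewise along the tower, to Kempf-type vanishing on $\P^1$; this yields normality of $\sQ(w)$ together with $(\pi_w)_*\cO_{Z_w}(\lambda)=\cO_{\sQ(w)}(\lambda)$ and its higher vanishing for $\lambda\in\Lambda_+$. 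Projective normality then descends from the corresponding standard statement on the tower $Z_w$. The main technical hurdle here is the ind-scheme structure: I would first replace $\sQ(e)$ by its finite-type truncations (parameterised by the degree of the underlying rational map), run the vanishing arguments on each Noetherian slice, and pass to the limit.

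\emph{Cohomology (2).} The closed embedding $\iota_w:\sQ(w)\hookrightarrow\sQ$ together with projective normality gives a $\gI$-equivariant surjection $W(\lambda)\twoheadrightarrow H^0(\sQ(w),\cO(\lambda))^*$ for $\lambda\in\Lambda_+$. The extremal weight vector in $W(\lambda)$ of weight $w\lambda$ has nonzero image (its dual pairs nontrivially with the $T$-fixed point of $\sQ(w)$ attached to $w$), so by Kashiwara's definition the surjection factors through the cyclic $\gI$-submodule $W(\lambda)_w$. Equality of the two $\gI$-modules is then a character comparison: compute $\mathrm{ch}\,H^0(\sQ(w),\cO(\lambda))^*$ by Lefschetz localisation on $Z_w$ and match it against the combinatorial formula for $\mathrm{ch}\,W(\lambda)_w$ coming from \cite{NNS,FM}. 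Higher vanishing and the case $\lambda\notin\Lambda_+$ are inherited from $Z_w$ via the pushforward identity above.

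\emph{Demazure operators (3) and (4).} For $s_i w>w$, the natural map $P_i\times^{B}\sQ(w)\to\sQ(s_i w)$ is a birational $\P^1$-bundle onto its image, so fibrewise cohomology of line bundles on $\P^1$ combined with (2) immediately gives (3). For (4), the condition $\langle\beta,w\alpha\rangle\le 0$ for every positive root $\alpha$ says that $w^{-1}\beta$ is antidominant, which is exactly the statement that the affine translation $t_\beta\in W_{\mathrm{aff}}$ moves $w$ downward in the semi-infinite Bruhat order; geometrically, an affine $\mathrm{SL}_2$-subgroup attached to $\beta$ acts on $\sQ(w)$ and supplies the operator $D_\beta$. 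Because $t_\beta$ scales $\cO(\lambda)$ along the relevant orbit through the reference point $w$ by $q^{\langle\beta,w\lambda\rangle}$, the would-be Demazure recursion for $D_\beta$ has coincident source and target and collapses to the self-similarity identity \eqref{fdif}; commutativity of the $D_\beta$ for $\beta$ in the stated cone follows from the commutativity of the translation lattice $Q^\vee$. I expect the main obstacle throughout to be this last point: identifying the correct affine $\mathrm{SL}_2$-subgroups for a general $\beta$ (rather than for a single affine simple root) and verifying that their orbit closures truly lie inside $\sQ(w)$ precisely when the sign hypothesis is satisfied.
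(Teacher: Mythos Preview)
Your proposal has a fundamental gap in the base case that makes the induction collapse. You assert that $\sQ(e)$ is ``already normal and projectively normal by \cite{BF14a}'', but Braverman--Finkelberg prove normality of the full quasi-map spaces $\sQ(\beta)$, i.e.\ of $\sQ=\sQ(w_0)$, \emph{not} of the Schubert sub-ind-scheme $\sQ(e)$. Unlike the classical Bott--Samelson setup where the base is a point, here $\sQ(e)=\overline{\mathtt{ev}_0^{-1}(B/B)}$ is itself an infinite-type ind-scheme whose normality is precisely part of what is being proved. So your tower $Z_w=P_{i_1}\times^B\cdots\times^B P_{i_\ell}\times^B\sQ(e)$ has no established normal base to sit on, and the Kempf-type argument cannot start.

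The paper runs the induction in the \emph{opposite} direction: it starts from $w_0$ (where \cite{BF14a,BF14b} apply) and descends to an arbitrary $w\in W$. But descent in the finite Bruhat order alone does not reach every $w$ from $w_0$; the paper therefore uses the weak quantum Bruhat order $>_q$ on $W$ (Theorem~\ref{ind-w}, from \cite{LNSSS1}), which allows steps by the \emph{affine} simple reflection $s_0$ as well. Geometrically, the $s_0$-step is not a $P_i\times^B$ construction but a twist by defects at $0\in\P^1$ (Lemma~\ref{1step0}), and the crucial character identity $D_i(\mathrm{ch}\,W(\lambda)_w)=q^{\delta_{i,0}\langle\vartheta^\vee,w\lambda\rangle}\mathrm{ch}\,W(\lambda)_{\overline{s_iw}}$ for $i\in\mathtt I_{\mathrm{aff}}$ (Corollary~\ref{cNS}) is established via Kashiwara's crystal theory (Theorem~\ref{NS}) \emph{before} any geometry, then fed back into the normality argument (Proposition~\ref{inf-norm}) and the cohomology computation (Theorem~\ref{main}). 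Your part (4) correctly senses that affine translations are involved, but they enter at the very foundation of the induction rather than as a separate endgame; your ``affine $\mathrm{SL}_2$ attached to $\beta$'' is really the single $\mathbf I_0$-action implementing $s_0$, iterated along a reduced word for $t_\beta$. Also, in part (2) your map goes the wrong way: restriction dualises to an injection $H^0(\sQ(w),\cO(\lambda))^*\hookrightarrow W(\lambda)$ whose image contains $W(\lambda)_w$ (Lemma~\ref{rest}), and equality is forced by squeezing this between the crystal character formula and the $\P^1$-cohomology upper bound.
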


We remark that Theorem \ref{fmain} 2)--4) can be regarded as a semi-infinite analogue of the Demazure character formula due to Demazure-Joseph-Kumar in the ordinary setting (\cite{Dem74,Jos84}, see Kumar \cite{Kum02} V\!I\!I\!I), that contains difference equations (\ref{fdif}) characterizing them.

\begin{fthm}[$=$ Theorem \ref{FM} $+$ Corollary \ref{BW}]\label{ffree}
For each $w \in W$ and $\lambda \in \Lambda _+$, the module $W ( \lambda )_w$ admits a free action of a certain polynomial ring and its specialization to $\C$ gives the Feigin-Makedonskyi module $W _{w \lambda}$. In particular, we have
$$\Gamma ( \mathrm{Fl} ^{\frac{\infty}{2}}_{G} ( w ), \cO_{\mathrm{Fl} ^{\frac{\infty}{2}}_{G} ( w )} ( \lambda ) ) ^* \cong W _{w \lambda},$$
where $\mathrm{Fl} ^{\frac{\infty}{2}}_{G} ( w )$ is a variant of $\sQ ( w )$.
\end{fthm}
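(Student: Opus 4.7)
The plan is to deduce the freeness assertion of Theorem \ref{FM} from the cohomological and character information in Theorem \ref{fmain}, and then obtain Corollary \ref{BW} by a flat base change. The geometric bridge is a natural morphism $\pi: \sQ(w) \to X_w$ that records the zero divisors of the defining rational data relevant to $\lambda$; here $X_w$ is a product of symmetric powers of $\A^1$ indexed by $i \in \mathtt I$, whose dimensions are dictated by the positive parts of $\langle \alpha_i^\vee, w\lambda \rangle$. The coordinate ring $A_{\lambda,w} := \C[X_w]$ is a graded polynomial ring, and pullback along $\pi$ makes $H^0(\sQ(w), \cO(\lambda))$ into an $A_{\lambda,w}$-module; via Theorem \ref{fmain}(2) this transports to a commuting $A_{\lambda,w}$-action on $W(\lambda)_w$.

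Next I would prove freeness and identify the specialization. Normality of $\sQ(w)$ and the vanishing $H^{>0}(\sQ(w), \cO(\lambda)) = 0$ from Theorem \ref{fmain}(1)--(2), combined with the standard flat-family machinery for proper morphisms, reduce freeness of $\pi_* \cO_{\sQ(w)}(\lambda)$ to the constancy of fiberwise dimensions of $H^0$. That constancy is furnished by the difference equation (\ref{fdif}): the $D_\beta$-recursion says that multiplication by the degree-shifting generators of $A_{\lambda,w}$ increases $q$-weight by exactly the amounts encoded in the grading of $X_w$, so the graded character of $W(\lambda)_w$ factors as the Hilbert series of $A_{\lambda,w}$ times the character of its specialization $W(\lambda)_w \otimes_{A_{\lambda,w}} \C$. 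This specialization is a cyclic $\gI$-module of highest weight $w\lambda$ whose character matches Feigin-Makedonskyi's formula for $W_{w\lambda}$, and their presentation by generators and relations, together with the explicit form of the augmentation ideal of $A_{\lambda,w}$, then forces the two modules to coincide.

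Finally, I would define $\mathrm{Fl}^{\frac{\infty}{2}}_G(w)$ as the scheme-theoretic fiber $\pi^{-1}(0) \subset \sQ(w)$. The flatness established above together with the cohomology vanishing yield the base-change identity
$$\Gamma(\mathrm{Fl}^{\frac{\infty}{2}}_G(w), \cO_{\mathrm{Fl}^{\frac{\infty}{2}}_G(w)}(\lambda)) \cong \Gamma(\sQ(w), \cO(\lambda)) \otimes_{A_{\lambda,w}} \C,$$
whose dual is $W_{w\lambda}$ by the previous paragraph; this is Corollary \ref{BW}. The main obstacle is the freeness of $\pi_* \cO_{\sQ(w)}(\lambda)$: while the fiberwise cohomology vanishing and numerical character-matching are in hand from Theorem \ref{fmain}, ruling out jumps of fiber rank along the non-generic locus of $X_w$ requires the full combinatorial strength of the Demazure recursion in Theorem \ref{fmain}(3)--(4) together with a careful PBW-type analysis of $U(\gI)$ relative to the $w$-shifted loop grading.
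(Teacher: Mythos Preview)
There is a genuine gap in how you identify the polynomial ring and the map $\pi$. In the paper the ring is $\C[\A^{(\lambda)}]$ for \emph{every} $w$: it is the ring already acting on the full $W(\lambda)$ via the $U(z\h[z])$-action on the $\h$-weight $\lambda$ space (Theorem \ref{free}(2)), and this action simply restricts to the submodule $W(\lambda)_w \subset W(\lambda)$. Your ring $A_{\lambda,w}$, with symmetric-power factors of sizes $\max(0,\langle\alpha_i^\vee, w\lambda\rangle)$, has a different graded Hilbert series whenever $w\lambda \neq \lambda$, so even if $W(\lambda)_w$ happened to be free over it the specialization would not have the graded dimension of $W_{w\lambda}$. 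More basically, the map $\pi:\sQ(w)\to X_w$ you describe (``recording zero divisors of the data relevant to $\lambda$'') is not well-defined on the ind-scheme $\sQ(w)$, and the difference equation (\ref{fdif}) is an eigenvector relation for $D_\beta$, not a factorization of $\mathrm{ch}\,W(\lambda)_w$ as a Hilbert series times a polynomial.

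The paper's argument is short and algebraic, and follows a different route. One checks by $\h$-weight inspection that $v_{w\lambda}\in W(\lambda)$ satisfies the defining relations of $W_{w\lambda}$, giving a surjection $W_{w\lambda}\twoheadrightarrow W(\lambda)_w\otimes_{\C[\A^{(\lambda)}]}\C_0$. Because $W(\lambda)_w$ is sandwiched between two degree-shifts of the free $\C[\A^{(\lambda)}]$-module $W(\lambda)$, it is torsion-free of generic rank $\dim W(\lambda,0)$; semicontinuity gives $\dim W(\lambda,0)\le \dim\bigl(W(\lambda)_w\otimes_{\C[\A^{(\lambda)}]}\C_0\bigr)$. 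The decisive external input is Feigin--Makedonskyi's own dimension theorem $\dim W_{w\lambda}=\dim W(\lambda,0)$, which forces the surjection to be an isomorphism; graded Nakayama then yields freeness. No flat-family or constancy-of-fiber argument is invoked. For Corollary \ref{BW}, $\mathrm{Fl}^{\frac{\infty}{2}}(w)$ is defined as the quotient of $\sQ(w)$ by the right $H[\![z]\!]_1$-action (not as a fiber over $0\in X_w$), and its sections are computed as the $z\h[z]$-fixed part of $\Gamma(\sQ(w),\cO(\lambda))$, which by the freeness just proved is exactly the dual of $W_{w\lambda}$.
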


Cherednik-Orr \cite{CO} obtained a recursive formula for non-symmetric Macdonald polynomials specialized to $t = \infty$. The comparison with our construction yields:

\begin{fthm}[$=$ Corollary \ref{gnsMac}]\label{fnsMac}
For each $\lambda \in \Lambda_+$ and $w \in W$, there exists an $( \mathbf I \rtimes \Gm )$-equivariant sheaf $\mathcal E_{w} ( \lambda )$ so that
$$\mathrm{ch} \, H^0 ( \sQ ( w ), \mathcal E_{w} ( \lambda ) )^* = \left( \prod_{i \in \mathtt I} \prod_{k = 1} ^{\left< \alpha_i ^{\vee}, \lambda_{w} \right>} \frac{1}{1 - q ^{k}} \right) \cdot E_{- w \lambda} ^{\dagger} ( q^{-1}, \infty ),$$
where $\lambda_w$ is a dominant weight determined by $\lambda$ and $w$, and $E_{- w \lambda}^{\dagger} ( q, t )$ is $($the bar-conjugate of$)$ a non-symmetric Macdonald polynomial $($see $\S 5)$. In addition, we have $H ^i ( \sQ ( w ), \mathcal E _w ( \lambda) ) = \{ 0 \}$ for $i > 0$.
\end{fthm}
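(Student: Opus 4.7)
The plan is to combine Theorems \ref{fmain} and \ref{ffree} with Cherednik-Orr's \cite{CO} recursion for the non-symmetric Macdonald polynomials at $t = \infty$. By Theorem \ref{ffree}, the character of $W ( \lambda_w )_w$ factors as $\bigl( \prod_{i, k} (1 - q^k)^{-1} \bigr) \cdot \mathrm{ch} \, W_{w \lambda_w}$, where the product is indexed by $i \in \mathtt I$ and $1 \le k \le \langle \alpha_i^\vee, \lambda_w \rangle$, and by \cite{FM} the second factor is the evaluation at $t = \infty$ of a non-symmetric Macdonald polynomial. The task therefore reduces to realizing this character as the dual of the space of sections of an explicit $(\mathbf I \times \Gm)$-equivariant sheaf on $\sQ ( w )$.

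I would define the sheaf as $\mathcal E_w ( \lambda ) := \cO_{\sQ ( w )} ( \lambda_w ) \otimes \mathcal T$, where $\lambda_w \in \Lambda_+$ is the prescribed dominant weight and $\mathcal T$ is a $\Gm$-equivariant trivial line bundle implementing the $q$-shift $q^{\langle \beta, w \lambda_w \rangle}$ of (\ref{fdif}) for a coroot $\beta \in Q^\vee$ satisfying the hypothesis of Theorem \ref{fmain}~4). The higher cohomology vanishing $H^i(\sQ(w), \mathcal E_w(\lambda)) = 0$ for $i > 0$ then follows immediately from Theorem \ref{fmain}~2) applied to $\cO_{\sQ ( w )} ( \lambda_w )$, as tensoring with $\mathcal T$ only shifts the equivariant grading. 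Consequently $\mathrm{ch} \, H^0 ( \sQ ( w ), \mathcal E_w ( \lambda ) )^* = q^{\langle \beta, w \lambda_w \rangle} \cdot \mathrm{ch} \, W ( \lambda_w )_w$.

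The character identity is then established by induction on the length of $w$ in the finite Weyl group $W$. The base case $w = e$ reduces to the Feigin-Makedonskyi identity $\mathrm{ch} \, W ( \lambda ) = \bigl( \prod_{i, k} (1 - q^k)^{-1} \bigr) \cdot E^\dagger_{-\lambda}(q^{-1}, \infty)$. For the inductive step, Theorem \ref{fmain}~3)--4) dictates how $\mathrm{ch} \, W ( \lambda_w )_w$ transforms under the Demazure operators $D_i$ and the coroot-translation operators $D_\beta$, while Cherednik-Orr's recursion expresses $E^\dagger_{-w\lambda}(q^{-1}, \infty)$ via the corresponding intertwiners in the double affine Hecke algebra specialised at $t = \infty$. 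The hard part will be the precise identification of these two families of operators: Cherednik-Orr's operators arise from the polynomial representation of the DAHA, whereas $D_i$ and $D_\beta$ arise from the geometry of $\sQ ( w )$ and from Theorem \ref{fmain}. Matching them requires tracking the $q \to q^{-1}$ bar-involution, the precise choice of $\lambda_w$ as a function of $(\lambda, w)$, and verifying that the polynomial ring factor $\prod_{i,k}(1-q^k)^{-1}$ transforms compatibly with both recursions, so that after dividing it out the residual factor coincides with $E^\dagger_{-w\lambda}(q^{-1}, \infty)$ on the nose.
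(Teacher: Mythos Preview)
Your proposed sheaf $\mathcal E_w(\lambda) = \cO_{\sQ(w)}(\lambda_w)\otimes\mathcal T$ does not compute the correct character. By Theorem~\ref{ffree} its $H^0$-dual is (up to a $q$-shift) $W(\lambda_w)_w$, whose character factors as the product $\prod_{i,k}(1-q^k)^{-1}$ times $\mathrm{ch}\,W_{w\lambda_w}$; Feigin--Makedonskyi then identifies this last factor with $E^\dagger_{-w\lambda_w}(q^{-1},\infty)$, \emph{not} $E^\dagger_{-w\lambda}(q^{-1},\infty)$. Since $\lambda_w = \lambda - \sum_{w\alpha_j<0}\varpi_j \neq \lambda$ as soon as $w\neq e$, you land on the wrong Macdonald polynomial. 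The induction you sketch does not close either: Theorem~\ref{fmain}~3) gives $D_i(\mathrm{ch}\,W(\lambda_w)_w)=\mathrm{ch}\,W(\lambda_w)_{s_iw}$, whereas the object you need at the next step involves the different weight $\lambda_{s_iw}$; and even ignoring that, the Cherednik--Orr recursion is governed by $T_i = D_i - 1$, not by $D_i$, so matching ``these two families of operators'' is not a bookkeeping issue but an actual obstruction.

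The paper's construction is genuinely different and is designed precisely to produce the $D_i-1$ recursion. One takes a Bott--Samelson--type space $\sQ(\mathbf i,e)=\mathbf I_{i_1}\times^{\mathbf I}\cdots\times^{\mathbf I}\mathbf I_{i_\ell}\times^{\mathbf I}\sQ(e)$ for a reduced word $\mathbf i$ of $w$, and sets
\[
\mathcal E_w(\lambda):=(q_{\mathbf i,e})_*\,\cO_{\sQ(\mathbf i,e)}\bigl(\lambda-\textstyle\sum_{k=1}^{\ell}H_k\bigr),
\]
where the $H_k$ are the exceptional divisors ``$g_k\in\mathbf I$''. The short exact sequence $0\to(q_i)_*\mathcal E_w^+(-H_1)\to(q_i)_*\mathcal E_w^+\to\mathcal E_w\to 0$ then yields exactly $\chi(\mathcal E_{s_iw}(\lambda))=D_i(\chi(\mathcal E_w(\lambda)))-\chi(\mathcal E_w(\lambda))$, matching Cherednik--Orr on the nose; the extra factor $(1-q^{\langle\alpha_j^\vee,\lambda\rangle})$ in their Case~1 is absorbed by the change $\lambda_{s_iw}=\lambda_w-\varpi_j$ in the prefactor. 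The higher vanishing is not free from Theorem~\ref{fmain} but requires a separate Kodaira-type argument (Proposition~\ref{coh-tw}) for the divisor-twisted sheaves on $\sQ(\mathbf i,e)$.
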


We remark that the vector space $H^0 ( \sQ ( w ), \mathcal E_{w} ( \lambda ) )^*$ is a cyclic $\gI$-module (Lemma \ref{restE}). One thing missing here at the moment is an analogue of Theorem \ref{ffree} in the setting of Theorem \ref{fnsMac}.

In the course of its proof, we present an analogue of the Kodaira vanishing theorem (Proposition \ref{coh-tw}) along the line of Kumar \cite{Kum02}. Some particular instances of our results are two formulas, one of which is \cite{CO} Proposition 2.5:

\begin{fcor}[$=$ Corollary \ref{nMconn}]\label{fcor}
For each $\lambda \in \Lambda _+$, we have the following relations between different specializations of non-symmetric Macdonald polynomials:
\begin{align*}
D_{w_0} ( E_{w_0 \lambda} ^{\dagger} ( q^{-1}, \infty ) ) & = E_{w_0 \lambda}^{\dagger} ( q, 0 )\\
D_{w_0 t_{\beta}} ( E_{w_0 \lambda}^{\dagger} ( q, 0 ) ) & = q^{\left< \beta, \lambda \right>} \cdot E_{w_0 \lambda} ^{\dagger} ( q^{-1}, \infty ),
\end{align*}
where $w_0 \in W$ is the longest element, $\beta \in Q^{\vee}$ satisfies $\left< \beta, \alpha_i \right> < 0$ for each $i \in \mathtt I$, and $t_{\beta}$ is the translation element in the affine Weyl group $W \ltimes Q^{\vee}$.
\end{fcor}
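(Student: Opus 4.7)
The plan is to derive both identities as direct consequences of Theorem \ref{fmain} parts 3) and 4), using Theorems \ref{ffree} and \ref{fnsMac} to realize each specialization of the non-symmetric Macdonald polynomial as the character of an appropriate semi-infinite Demazure module. Concretely, I would first identify $E_{w_0\lambda}^{\dagger}(q^{-1},\infty)$, up to the combinatorial prefactor $\prod_{i,k}(1-q^{k})^{-1}$ appearing in Theorem \ref{fnsMac}, with $\mathrm{ch}\, W(\lambda)_{v}$ for a specific $v$ in the extended affine Weyl group: the input is Theorem \ref{ffree}, which says that the $\C$-specialization of $W(\lambda)_{v}$ is Feigin-Makedonskyi's module $W_{v\lambda}$, whose character is known to coincide with the $t=\infty$ Macdonald specialization up to the same prefactor. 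The $t=0$ specialization $E_{w_0\lambda}^{\dagger}(q,0)$ I would identify, via the classical Ion--Sanderson theorem, with an affine Demazure character, and then realize that character as $\mathrm{ch}\, W(\lambda)_{v'}$ for a second element $v' \in W \ltimes Q^{\vee}$ using the isomorphism $H^0(\sQ(v'),\cO_{\sQ(v')}(\lambda))^{*} \cong W(\lambda)_{v'}$ from Theorem \ref{fmain}.2.

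For identity (a), I would pick a reduced expression $w_0 = s_{i_{1}}\cdots s_{i_{\ell}}$ and apply Theorem \ref{fmain}.3 inductively to obtain
$$D_{w_0}(\mathrm{ch}\, W(\lambda)_{v}) = D_{i_{1}}\cdots D_{i_{\ell}}(\mathrm{ch}\, W(\lambda)_{v}) = \mathrm{ch}\, W(\lambda)_{w_0 v},$$
and then translate this equality of semi-infinite Demazure characters, through the identifications of the previous step, into the stated identity on Macdonald polynomials. For identity (b), I would use that an antidominant $\beta \in Q^{\vee}$ satisfies the hypothesis of Theorem \ref{fmain}.4 with $w = e$, so that $D_{\beta}$ acts on $\mathrm{ch}\, W(\lambda)_{e}$ by the scalar $q^{\langle \beta,\lambda\rangle}$; then I would factor $D_{w_0 t_{\beta}} = D_{w_0}\circ D_{\beta}$ (verifying $\ell(w_0 t_{\beta}) = \ell(w_0) + \ell(t_{\beta})$ using the antidominance of $\beta$), pull out the scalar, and combine with identity (a) to reach the right-hand side.

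The main obstacle is bookkeeping. The prefactor $\prod_{i,k}(1-q^{k})^{-1}$ of Theorem \ref{fnsMac} depends on $\lambda_{w}$ and therefore varies with $w$, so some care is needed to check that it behaves compatibly with the operators $D_{w_0}$ and $D_{\beta}$ when transporting through the two different Macdonald specializations; a secondary subtlety is correctly aligning the indexing conventions (in particular the passage between $w\lambda$ and $-w\lambda$, and between $E_{\mu}^{\dagger}$ and $E_{\mu}$) so that $E_{w_0\lambda}^{\dagger}$ in the corollary matches the $-w\lambda$ that appears in Theorem \ref{fnsMac}. Once these conventions are pinned down, both identities are formal consequences of Theorem \ref{fmain}.
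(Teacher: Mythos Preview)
Your plan for identity (a) is essentially the paper's: identify $E_{w_0\lambda}^{\dagger}(q^{-1},\infty)$ and $E_{w_0\lambda}^{\dagger}(q,0)$ with $\mathrm{ch}\,W(\mu)_{e}$ and $\mathrm{ch}\,W(\mu)_{w_0}$ respectively (where $\mu=-w_0\lambda$), up to the common $q$-prefactor coming from the free $\C[\A^{(\mu)}]$-action, and then apply $D_{w_0}$ via Theorem~\ref{fmain}.3. One side remark: you should not route this through Theorem~\ref{fnsMac}; that result concerns the sheaves $\mathcal E_w(\lambda)$ and the prefactor there varies with $w$. The paper instead uses Theorem~\ref{ffree} together with Theorem~\ref{nMac} so that both specializations acquire the \emph{same} prefactor $\prod_{i}\prod_{k=1}^{\mu_i}(1-q^k)^{-1}$, which is $D_i$-invariant and cancels cleanly.

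Your argument for identity (b) has a genuine gap. The claimed length additivity $\ell(w_0 t_\beta)=\ell(w_0)+\ell(t_\beta)$ is \emph{false} for antidominant $\beta$. Already in type $\mathsf A_1$ with $\beta=-\alpha^{\vee}$ one has $t_\beta=s_1s_0$ of length $2$, $w_0=s_1$ of length $1$, but $w_0t_\beta=s_0$ of length $1$, not $3$. The correct relation is the opposite one, $\ell(t_\beta)=\ell(w_0)+\ell(w_0t_\beta)$, so that $D_{t_\beta}=D_{w_0}\circ D_{w_0t_\beta}$, and there is no factorization $D_{w_0t_\beta}=D_{w_0}\circ D_{t_\beta}$. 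Moreover, even the eigenvalue step is misplaced: identity (b) applies $D_{w_0t_\beta}$ to $E_{w_0\lambda}^{\dagger}(q,0)\sim\mathrm{ch}\,W(\mu)_{w_0}$, whereas the hypothesis of Theorem~\ref{fmain}.4 you invoke (with $w=e$) makes $D_{t_\beta}$ a scalar on $\mathrm{ch}\,W(\mu)_{e}$, not on $\mathrm{ch}\,W(\mu)_{w_0}$; for the latter one would need $\beta$ dominant, contrary to the hypothesis.

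What the paper actually does (via Theorem~\ref{dmain} and its proof, i.e.\ the single-step identities of Corollary~\ref{cNS}) is to observe that a reduced expression $s_{i_1}\cdots s_{i_k}$ of $w_0t_\beta$ begins with $s_0$ and encodes a walk in the $>_q$ order from $w_0$ down to $\overline{(w_0t_\beta)\cdot w_0}=\overline{t_{w_0\beta}}=e$; applying $D_{i_j}$ step by step to $\mathrm{ch}\,W(\mu)_{w_0}$ produces $\mathrm{ch}\,W(\mu)_{e}$ times the accumulated $q$-power $q^{\langle\beta,\lambda\rangle}$ coming from the $s_0$-steps. This is the mechanism you need to replace your factorization with.
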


The organization of this note is as follows. The first two sections contain preliminary material on current algebra representations and semi-infinite flag variety, respectively. We provide proofs of some facts for which the author was unable to find appropriate references. The third section is a preparatory observation that the semi-infinite flag variety must be actually projectively normal. The fourth section contains a proof of Theorem A through algebraic manipulations. Taking the works of Braverman-Finkelberg \cite{BF14a,BF14b,BF14c} into account, the idea is supported by the fact that the Demazure character formula is in fact equivalent to the normality of Schubert varieties in the classical case. The fifth section contains a proof of Theorem B. Its main argument gives a simple (to the author's point of view) explanation of a result of Feigin-Makedonskyi-Orr \cite{FMO} (cf. Naito-Nomoto-Sagaki \cite{NNS}). The sixth section is about Theorem C, that is a geometric interpretation of the intertwiners in the theory of non-symmetric Macdonald polynomials at $t=\infty$ due to Cherednik-Orr \cite{CO} (which can be also seen as a semi-infinite analogue of the $t = 0$ specialization of non-symmetric Macdonald polynomials obtained by Sanderson and Ion \cite{San00, Ion03}).

After completed the first version of this paper, the author learned that a part of Theorem \ref{fmain} is also formulated in \cite{BFunp} from a slightly different perspective.

{\small
{\bf Acknowledgement:} The author would like to thank Michael Finkelberg for attracting his attention to \cite{FM} and sent me his unpublished note \cite{BFunp}. He also would like to thank Satoshi Naito for various comments and suggestions on the topic presented in this note, Shrawan Kumar for discussion on semi-infinite flag varieties, and Evgeny Feigin and Daniel Orr for preventing him from some incorrect references. The original version of this note was written during the author's stay at MIT in the academic year 2015/2016. The author would like to thank George Lusztig and MIT for their hospitality. Finally, the author would like to express his thanks to the referee who have kindly made many remarks on the previous version of this note.}

\section{Preparatory materials}
Throughout this note, a variety is a separated reduced scheme of finite type over $\mathbb C$, and its points are closed points unless otherwise stated.

A vector space is always a $\C$-vector space, and a graded vector space refers to a $\Z$-graded vector space whose grading is bounded from below and each of its graded piece is finite-dimensional. For a graded vector space $M = \bigoplus_{i \in \Z} M _i$ or its completion $M = \prod_{i \in \Z} M_i$, we define its dual as $M^* := \bigoplus_{i \in \Z} \mathrm{Hom}_{\C} ( M _i, \C )$, where $\mathrm{Hom}_{\C} ( M _i, \C )$ is understood to have degree $-i$. (We sometimes deal with the graded completion of the dual of a graded module, that is not a graded module in our sense. In such an occasion, we regrade the module in an opposite way if necessary.) We define the graded dimension of a graded vector space as
$$\mathsf{gdim} \, M := \sum_{i\in \Z} q^i \dim _{\C} \, M_i \in \Q (\!(q)\!).$$

For each $n, k$, we denote by $\C [\A^{(n)}] _{\le k}$ the degree $\le k$-part of the symmetric polynomial ring in $n$-variables (of their degrees one).

\subsection{Generality}
Let $G$ be a connected, simply connected simple algebraic group over $\C$, and let $B$ and $H$ be a Borel subgroup and a maximal torus of $G$ so that $H \subset B$. We set $U$ $(= [B,B])$ to be the unipotent radical of $B$ and let $U^-$ be the opposite unipotent subgroup of $U$ with respect to $H$. We denote the Lie algebra of an algebraic group by German letters. We have a (finite) Weyl group $W := N_G ( H ) / H$. For an algebraic group $E$, we denote its set of $\C [z]$-valued points by $E [z]$, its set of $\C [\![z]\!]$-valued points by $E [\![z]\!]$, and its set of $\C (z)$-valued points by $E (z)$.

Let $\Lambda := \mathrm{Hom} _{gr} ( H, \C ^{\times} )$ be the weight lattice of $H$, let $\Delta \subset \Lambda$ be the set of roots, let $\Delta_+ \subset \Delta$ be the set of roots that yield root subspaces in $\gb$, and let $\Pi \subset \Delta _+$ be the set of simple roots. We set $\Delta_- := - \Delta_+$. For $\lambda, \mu \in \Lambda$, we define $\lambda \ge \mu$ if and only if $\lambda - \mu \in \Z_{\ge 0} \Delta_+$. Let $Q^{\vee}$ be the dual lattice of $\Lambda$ with a natural pairing $\left< \bullet, \bullet \right> : Q^{\vee} \times \Lambda \rightarrow \Z$. We define $\Pi^{\vee} \subset Q ^{\vee}$ to be the set of positive simple coroots, and let $Q_+^{\vee} \subset Q ^{\vee}$ be the set of non-negative integer span of $\Pi^{\vee}$. We set $\Lambda_+ := \{ \lambda \in \Lambda \mid \left< \alpha^{\vee}, \lambda \right> \ge 0, \hskip 2mm \forall \alpha^{\vee} \in \Pi^{\vee} \}$. Let $r$ be the rank of $G$ and we set $\mathtt I := \{1,2,\ldots,r\}$. We fix bijections $\mathtt I \cong \Pi \cong \Pi^{\vee}$ so that $i \in \mathtt I$ corresponds to $\alpha_i \in \Pi$, its coroot $\alpha_i^{\vee} \in \Pi ^{\vee}$, and a simple reflection $s_i \in W$ corresponding to $\alpha_i$. We also have a reflection $s_{\alpha} \in W$ corresponding to $\alpha \in \Delta_+$. Let $\ell : W \rightarrow \Z_{\ge 0}$ be the length function and let $w_0 \in W$ be the longest element. Let $\Delta_{\mathrm{aff}} := \Delta \times \Z \delta \cup \{m \delta\}_{m \neq 0}$ be the untwisted affine root system of $\Delta$ with its positive part $\Delta_+ \subset \Delta_{\mathrm{aff}, +}$. We set $\alpha_0 := - \vartheta + \delta$, $\Pi_{\mathrm{aff}} := \Pi \cup \{ \alpha_0 \}$, and $\mathtt I_{\mathrm{aff}} := \mathtt I \cup \{ 0 \}$, where $\vartheta$ is the highest root of $\Delta_+$. We set $W _{\mathrm{aff}} := W \ltimes Q^{\vee}$ and call it the affine Weyl group. It is a reflection group generated by $\{s_i \mid i \in \mathtt I_{\mathrm{aff}} \}$, where $s_0$ is the reflection with respect to $\alpha_0$. Sending $s_0 \mapsto s_{\vartheta}$ (and $s_i \mapsto s_i$ for $i \in \mathtt I$) induces a group homomorphism $W_{\mathrm{aff}} \ni w \mapsto \overline{w} \in W$. Together with the normalization $t_{- \vartheta^{\vee}} := s_{\vartheta} s_0$ (for the coroot $\vartheta^{\vee}$ of $\vartheta$), we introduce the translation element $t_{\beta} \in W _{\mathrm{aff}}$ for each $\beta \in Q^{\vee}$.

Let $\mathtt{ev}_0 : G [z] \rightarrow G$ be the evaluation map at $z = 0$. For each $\mathtt J \subset \mathtt I$, we have a Coxeter subgroup $W _{\mathtt J} \subset W$ whose simple reflections are $\{s_i \mid i \in \mathtt J \}$ and a parabolic subgroup $B \subset P_{\mathtt J} \subset G$ whose Weyl group (of the Levi part) is naturally identified with $W_{\mathtt J}$. We set $\mathbf I_{\mathtt J} := \mathtt{ev}_0^{-1} ( P_{\mathtt J} )$. We set $\mathbf I : = \mathbf I_{\emptyset}$ and call it the Iwahori subgroup of $G [z]$. We set $\gI := \mathrm{Lie} \, \mathbf I$ and $\gI_{\mathtt J} := \mathrm{Lie} \, \mathbf I_{\mathtt J}$. We have a unique minimal connected closed subgroup $G [z] \not \supset \mathbf I_0 \subset G ( z )$ that contains $\mathbf I$ ($= \mathbf I_{\emptyset}$). For each $i \in \mathtt I$, we denote by $B^0_i$ the intersection of $\mathbf I$ with the semi-simple Levi component $L_i^0$ of $\mathbf I_i$ that is stable by the adjoint $H$-action.

For each $\lambda \in \Lambda _+$, we denote by $V ( \lambda )$ (or $V _G ( \lambda )$ in case we specify $G$) the irreducible finite-dimensional $\g$-module with its highest weight $\lambda$. It is standard that we have a unique non-zero vector $v_{w \lambda} \in V ( \lambda )$ of weight $w \lambda$ up to scalar for each $w \in W$.

Let $\varpi_1,\ldots,\varpi_r \in \Lambda_+$ be the dual basis of $\Pi^{\vee}$. For $\lambda \in \Lambda_+$, we expand it as
$$\lambda = \sum_{i=1}^r \lambda_i \varpi_i \hskip 3mm \text{with} \hskip 3mm \lambda_i \in \Z_{\ge 0} \hskip 3mm\text{for} \hskip 3mm 1 \le i \le r$$
and define $|\lambda| := \sum _{i=1}^r \lambda_i$ and $\lambda! := \prod_{i=1}^r \lambda_i!$. We also identify $\lambda$ with a composition $(\lambda_1,\ldots, \lambda_r) \in \Z _{\ge 0}^r$. Using this identification, we define
$$\C [ \A ^{(\lambda)} ] := \bigotimes _{i=1}^r \C [x_{i,1},\ldots,x_{i,\lambda_i}] ^{\Sym_{\lambda_i}} \subset \bigotimes _{i=1}^r \C [x_{i,1},\ldots,x_{i,\lambda_i}] =: \C [ \A ^{\lambda} ].$$

Let $\widehat{\g}$ be the untwisted affine Kac-Moody Lie algebra arising from $\g$, and let $\g [z] := \g \otimes _{\C} \C [z]$ be the current algebra of $\g$. We have natural inclusions $\g \subset \g [z] \subset \widehat{\g}$. Let $\widehat{\h} = \h \oplus \C K \oplus \C d \subset \widehat{\g}$ be the Cartan subalgebra that prolongs $\h \subset \g$ with a convention that $[ K , \widehat{\g} ] = 0$ and $d$ is the degree operator of $\g [z]$. We equip a $\Z$-grading of $\g [z]$ by setting $\deg \, \xi \otimes z^m = m$ for every $\xi \in \g \setminus \{ 0 \}$ (this is the grading induced by the $d$-action). We note that $U ( \g [z] )$ is not a graded vector space in our sense.

Let $\bK := \C ( t )$ and let $U_t$ be the quantum loop algebra of $\widehat{\g}$ with its quantum parameter $t$ (see e.g. \cite{Kas05} 2.1). It has the positive part $U_t^+ \subset U_t$, the Cartan part $U_t^0 \subset U_t$, and the classical part $U_t^{\flat} \subset U_t$. We have their $\C [t]$-integral lattices $\mathbf U_t^{\flat}, \mathbf U_t^+, \mathbf U_t^0$ so that
$$\mathbf U_t^+ \otimes_{\C [t]} \C _1 \cong U ( [\gI, \gI] ), \hskip 3mm \mathbf U_t^0 \otimes_{\C [t]} \C _1 \subset U ( \h \oplus \C ) ^{\wedge}, \hskip 1.5mm \text{and} \hskip 1.5mm \mathbf U_t^{\flat} \otimes_{\C [t]} \C _1 \subset U ( \g )^{\wedge},$$
where $U ( \h \oplus \C ) ^{\wedge}$ and $U ( \g )^{\wedge}$ are the integral weight idempotents completions of $U ( \h \oplus \C )$ and $U ( \g )$, respectively, and their inclusions are dense. We set $U_t^{\ge 0} := U_t^{+} U_t^0 \subset U_t$. The algebra $U _t$ also admits an $\exp d$-action (by embedding it into a quantum algebra of Kac-Moody type) that commutes with $U_t^{\flat}$, so that the degree $\exp ( m )$-part of $U_t$ corresponds to the degree $m$-part of $U ( \g [z,z^{-1}] )$ for each $m \in \Z$. We regrade this degree $\exp (m)$-part of $U_t^+$ as the degree $m$-part.

For each $0 \neq \lambda \in \Lambda_+$ and $x \in \C$, we sometimes regard $V ( \lambda )$ as an irreducible $\g [z]$-module via the Lie algebra quotient map $\g [z] \rightarrow \g [z] / ( z - x ) \g [z] \cong \g$, that we denote by $V ( \lambda, x )$. (We note that $V ( 0, x ) = V ( 0, 0 )$ for every $x \in \C$.) For a graded $\gI$-module $M$, we define its character as
$$\mathrm{ch} \, M := \sum_{\lambda \in \Lambda} e ^{\lambda} \, \mathsf{gdim} \, \mathrm{Hom} _{\h} ( \C_{\lambda}, M ) \in \Q (\!(q)\!) [\Lambda].$$
We replace $\h$ with $\bK [ Q^{\vee} ] \subset U_t$ to define a character of a $U_t^{\ge 0}$-module (with the multiplicative action on $\C_{\lambda}$). For two such modules $M$ and $N$, we denote $\mathrm{ch} \, M \le \mathrm{ch} \, N$ if the corresponding inequality holds for every coefficient of $q^k e^{\lambda}$ ($k \in \Z, \lambda \in \Lambda$). Each $V ( \lambda )$ ($\lambda \in \Lambda_+$) admits a lift $V_t ( \lambda )$ into a $U_t^{\flat}$-module so that $\mathrm{ch} \, V ( \lambda, 0 ) = \mathrm{ch} \, V_t ( \lambda )$ by further extending to a $U_t^{\ge 0}$-module concentrated in degree $0$.

Let $X := G / B$ be the flag variety of $G$, that we sometimes denote by $X_G$. For each $\lambda \in \Lambda$, we have a line bundle $G \times ^B \lambda$, that we denote by $\cO _X ( \lambda )$. For each $w \in W$, we have a $B$-orbit $\mathbb O (w) \subset X$ obtained as $B \dot{w} B/B \subset X$ with a unique $H$-fixed point $x_w$, where $\dot{w} \in N_G ( H )$ is a lift of $w$ (so that $\mathbb O ( w )$ is independent of the choice). We set $X ( w ) := \overline{\mathbb O ( w )}$. It is well-known that $\dim \, X ( w ) = \ell ( w )$. For $w, w' \in W$, we write $w > w'$ if and only if $X ( w ) \supset X ( w' )$.

\subsection{Current algebras}\label{CAcat}

\begin{defn}[integrable module]
A $\g [z]$-module $M$ is said to be integrable if and only if $M$ decomposes into a direct sum of finite-dimensional $\g$-modules. Let $\g [z] \mathchar`-\mathsf{mod}$ be the category of finitely generated integrable $\g [z]$-module. For each $\lambda \in \Lambda _+$, let $\g [z] \mathchar`-\mathsf{mod}^{\le \lambda}$ be the fullsubcategory of $\g [z] \mathchar`-\mathsf{mod}$ whose object is isomorphic to a direct sum of $\g$-modules in $\{ V (\mu)\}_{\mu \le \lambda}$.
\end{defn}

\begin{defn}[projective modules and global Weyl module]\label{gWP}
For each $\lambda \in \Lambda _+$, we define the non-restricted projective module $P ( \lambda )$ as
$$P ( \lambda ) := U ( \g [z] ) \otimes _{U ( \g )} V ( \lambda ).$$
Let $P ( \lambda ;\mu )$ be the largest $\g [z]$-module quotient of $P ( \lambda )$ so that
\begin{equation}
  \Hom _{\g} ( V (\gamma), P ( \lambda; \mu ) ) = \{ 0 \} \hskip 5mm \text{if} \hskip 5mm \gamma \not\le \mu.\label{cut}
\end{equation}
We define the global Weyl module $W ( \lambda )$ of $\g$ to be $P ( \lambda; \lambda )$.
\end{defn}

\begin{lem}\label{graded-mods}
The projective module $P ( \lambda )$, its quotient $P ( \lambda; \mu )$ and global Weyl modules $W ( \lambda )$ can be regarded as graded modules with a simple head $V ( \lambda, 0 )$ sitting at degree $0$ $($for $\lambda, \mu \in \Lambda_+)$.
\end{lem}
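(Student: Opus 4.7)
My plan is threefold: grade $P(\lambda)$ via PBW, descend the grading to $P(\lambda;\mu)$ and $W(\lambda)$, and finally identify $V(\lambda,0)$ as the unique graded simple quotient sitting in degree $0$.

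For the first step, the PBW theorem (with positive-$z$-degree factors ordered to the left of degree-$0$ factors) gives a vector-space isomorphism $U(\g [z]) \cong U(\g \otimes z \C [z]) \otimes_{\C} U(\g)$. Tensoring over $U(\g)$ with $V(\lambda)$ yields
$$P(\lambda) \;\cong\; U(\g \otimes z \C [z]) \otimes_{\C} V(\lambda)$$
as vector spaces, and the $z$-degree induces a grading bounded below by $0$. Since every generator of $\g \otimes z\C [z]$ has degree $\ge 1$, only finitely many PBW monomials in $\g \otimes z \C [z]$ have total $z$-degree $k$; combined with $\dim V(\lambda) < \infty$ this makes each graded piece finite-dimensional, so $P(\lambda)$ is a graded vector space in the sense of the paper. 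The $\g [z]$-action is homogeneous since its generators are, and $P(\lambda)_0 = V(\lambda)$ as a $\g$-module.

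For the second step, the kernel of $P(\lambda) \twoheadrightarrow P(\lambda;\mu)$ is the $\g [z]$-submodule $K$ generated by the $V(\gamma)$-isotypic components of $P(\lambda)$ for all $\gamma \not\le \mu$. Because $\g$ sits in degree $0$, each $P(\lambda)_k$ is a $\g$-submodule and a homomorphism $V(\gamma) \to P(\lambda)$ is homogeneous of degree $k$ precisely when its image lies in $P(\lambda)_k$; consequently each isotypic component is a graded subspace and $K$, generated by graded subspaces under a homogeneous action, is a graded submodule. Thus $P(\lambda;\mu)$ (nontrivial precisely when $\lambda \le \mu$) and $W(\lambda) = P(\lambda;\lambda)$ inherit $\Z_{\ge 0}$-gradings. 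Since $P(\lambda)_0 = V(\lambda)$ is $V(\lambda)$-isotypic, the $V(\gamma)$-isotypic components for $\gamma \neq \lambda$ lie entirely in positive degree, so $K \cap P(\lambda)_0 = 0$ whenever $\lambda \le \mu$, and the degree-$0$ part of the quotient is still $V(\lambda)$.

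For the final step, setting the positive-degree part of $M \in \{P(\lambda), P(\lambda;\mu), W(\lambda)\}$ to zero yields a graded surjection onto the simple module $V(\lambda, 0)$. To see that this is the graded head, observe that $M_{>0}$ is closed under both the degree-preserving $\g$-action and the degree-raising $\g[z]_+$-action, hence is a graded $\g [z]$-submodule. Any proper graded submodule $N$ of $M$ must satisfy $N_0 \subsetneq V(\lambda)$ (otherwise $N$ contains the generators of $M$), and then $N_0 = 0$ by the $\g$-simplicity of $V(\lambda)$, forcing $N \subset M_{>0}$. Therefore $M_{>0}$ is the unique maximal graded submodule, and the graded head $M/M_{>0}$ is isomorphic to $V(\lambda, 0)$. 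The one delicate point I anticipate is verifying, in the second step, that the $\g$-isotypic decomposition respects the grading; this follows immediately from $\g$ sitting in degree $0$, and the rest is bookkeeping around PBW together with the identification of the maximal graded submodule as $M_{>0}$.
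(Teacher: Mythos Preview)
Your proof is correct and is precisely the kind of routine verification the paper has in mind; the paper's own proof is the single line ``Straight-forward from the construction,'' and you have written out that construction in full. The only remark worth making is that ``simple head'' here must be read in the graded sense (as you do in your final paragraph): in the ungraded category $P(\lambda)$ surjects onto $V(\lambda,x)$ for every $x\in\C$, so its ungraded head is far from simple, whereas your argument that $M_{>0}$ is the unique maximal graded submodule correctly yields $V(\lambda,0)$ as the graded head.
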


\begin{proof}
Straight-forward from the construction.
\end{proof}

\begin{thm}[Chari-Loktev \cite{CL06}, Fourier-Littelmann \cite{FL07}, Naoi \cite{Nao12}]\label{free}
For each $\lambda \in \Lambda _+$, it holds:
\begin{enumerate}
\item the module $P ( \lambda )$ is the projective cover of $V ( \lambda, x )$ as an integrable $\g [z]$-module for every $x \in \C$;
\item the module $W ( \lambda )$ admits a free action of $\C [ \A ^{(\lambda)} ]$ induced by the $U ( \h [z] )$-action on the $\h$-weight $\lambda$-part of $W ( \lambda )$, that commutes with the $\g [z]$-action;
\item the natural grading structure of $\C [ \A^{(\lambda)} ]$ respects the grading of $W ( \lambda )$.
\end{enumerate}
We set $\A^{(\lambda)} := \mathrm{Spec} \, \C [\A^{(\lambda)}]$. For each $x \in \A^{(\lambda)}$, we have the specialization $W ( \lambda, x ) := W ( \lambda ) \otimes_{\C [\A^{(\lambda)}]} \C _x$ of $W ( \lambda )$ corresponding to $x$.
\begin{itemize}
\item[4.] $W ( \lambda, x ) \cong W ( \lambda, y )$ as $\g$-modules for each $x, y \in \A^{(\lambda)}$;
\item[5.] if $x \in \A^{(\lambda)}$ is the orbit of $|\lambda|$-distinct points, then we have
$$W ( \lambda, x ) \cong \bigotimes _{i=1}^r \bigotimes _{j=1}^{\lambda_i} W ( \varpi_i, x_{i,j} ).$$
Here $( x_{i,1},\ldots,x_{i,\lambda_i} ) \in \A ^{\lambda_i}$ corresponds to $x$ $($up to $\Sym_{\lambda_i}$-action$)$.
\end{itemize}
\end{thm}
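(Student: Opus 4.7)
The plan is to deduce (1) from a formal Frobenius reciprocity argument, and to handle (2)--(5) as a single package driven by determining the dimension of the generic fibre of $W(\lambda)$ viewed as a coherent sheaf over $\Spec \C[\A^{(\lambda)}]$. For (1), the defining isomorphism $\Hom_{\g[z]}(P(\lambda), M) \cong \Hom_{\g}(V(\lambda), M)$, combined with the complete reducibility of integrable $\g$-modules, makes $P(\lambda)$ projective in the category of integrable $\g[z]$-modules. It is the projective cover of $V(\lambda, x)$ for each $x \in \C$ because the natural quotient $P(\lambda) \twoheadrightarrow V(\lambda, x)$ is essential: $P(\lambda)$ is generated by $V(\lambda)$ as a $\g[z]$-module, and since $V(\lambda)$ is $\g$-irreducible any proper quotient of $P(\lambda)$ killing the generator is zero.

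For (2) and (3), I would take the generating vector $v_\lambda \in W(\lambda)$ of $\h$-weight $\lambda$ and use Garland's identities to express the action of $U(\h[z])$ on $v_\lambda$ through symmetric-function Casimirs; combined with the cutoff (\ref{cut}) these force the action on the weight-$\lambda$ line to factor through $\C[\A^{(\lambda)}]$, with the grading compatibility in (3) automatic since $\deg z = 1$ on both sides. Freeness in (2) is deferred. For (5), the universal property of $W(\lambda, x)$ yields a canonical $\g[z]$-morphism into $\bigotimes_{i,j} W(\varpi_i, x_{i,j})$ whenever $x$ corresponds to $|\lambda|$ distinct points, surjective because the tensor of highest weight vectors generates. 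Matching $\g$-characters on both sides will then force injectivity, reducing the whole question to the dimension of the local Weyl modules $W(\varpi_i, x)$ established in the cited references via an identification with certain Demazure modules.

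Once (5) is in hand, I view $W(\lambda)$ as a coherent sheaf on $\A^{(\lambda)}$. Upper semi-continuity gives $\dim W(\lambda, x) \leq \dim W(\lambda, x_{\mathrm{gen}})$ for every closed point $x$, while a cyclicity argument producing the same $V(\lambda)$-generator at an arbitrary specialization gives the reverse inequality, proving (4). Constant fibre dimension on the smooth affine base $\A^{(\lambda)}$ then forces $W(\lambda)$ to be a free $\C[\A^{(\lambda)}]$-module, establishing (2).

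The main obstacle will be the sharp dimension count in (5): producing a spanning set of $W(\lambda, x)$ small enough to fit inside $\bigotimes W(\varpi_i, x_{i,j})$ requires a PBW-type argument, and knowing $\dim W(\varpi_i, x)$ on the nose is precisely the content of the Chari-Pressley, Chari-Loktev, Fourier-Littelmann and Naoi theorems, proved through the correspondence with level-one Demazure modules in the corresponding affine Kac-Moody setting.
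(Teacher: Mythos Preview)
Your handling of (1) via Frobenius reciprocity matches the paper exactly; the paper says nothing more than that. For (2)--(5) the paper's proof is purely a citation: it points to Chari--Ion \cite{CI14} \S 2.8--2.10 for the packaging and to \cite{FL07} (simply-laced) and \cite{Nao12} (non-simply-laced) for the substance. Your sketch therefore goes well beyond what the paper itself supplies, and its overall shape is the strategy actually carried out in those references.

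There is one genuine slip. Upper semi-continuity of $x \mapsto \dim_{\C} W(\lambda,x)$ for a finitely generated module over $\C[\A^{(\lambda)}]$ yields $\dim W(\lambda,x) \ge \dim W(\lambda,x_{\mathrm{gen}})$ at every closed point, not $\le$: the locus where the fibre dimension jumps is closed, so the generic fibre is the smallest. Hence the inequality that comes for free is that the special fibre at $0$ is \emph{at least} as large as the generic one, and the hard direction --- the one genuinely requiring the cited theorems --- is the upper bound $\dim W(\lambda,0)\le \prod_{i}\bigl(\dim W(\varpi_i)\bigr)^{\lambda_i}$. Your ``cyclicity argument producing the same $V(\lambda)$-generator at an arbitrary specialization'' does not deliver that bound; what does is precisely the identification of $W(\lambda,0)$ with a level-one affine Demazure module (Fourier--Littelmann, Naoi) or the explicit Chari--Loktev basis, which you correctly flag as the crux in your final paragraph. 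Once you swap the roles of the two inequalities, your architecture is sound and agrees with the literature the paper is invoking.
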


\begin{proof}
The assertion 1) follows by the definition of $P ( \lambda )$ through the Frobenius reciprocity. As explained in Chari-Ion \cite[2.8--2.10]{CI14}, the simply-laced cases of the assertions 2)--5) are contained in \cite{FL07} and the non simply-laced cases are contained in \cite{Nao12}.
\end{proof}

\begin{defn}[local Weyl module]
For each $\lambda \in \Lambda _+$ and $x \in \A ^{(\lambda)}$, we call $W ( \lambda, x )$ (in Theorem \ref{free}) the local Weyl module supported on $x$.
\end{defn}

\begin{thm}[Chari-Loktev, Fourier-Littelmann, Naoi]\label{qwc}
For each $\lambda \in \Lambda_+$, there exists a $U_t^{\ge 0}$-module $W _t ( \lambda )$ with a $\C [t]$-lattice so that its specialization to $t = 1$ yields $W ( \lambda )$. In particular, we have $\mathrm{ch} \, W_t ( \lambda ) = \mathrm{ch} \, W ( \lambda )$.
\end{thm}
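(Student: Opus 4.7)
The plan is to construct $W_t(\lambda)$ as a quantum loop Weyl module and then identify its specialization at $t=1$ with the classical global Weyl module $W(\lambda)$ from Definition \ref{gWP}. First I would define $W_t(\lambda)$ as the universal $U_t^{\ge 0}$-module generated by a weight-$\lambda$ cyclic vector $v_\lambda$, annihilated by the strictly positive loop part of $U_t^+$ and subject to the quantum integrability relations $F_i^{(\lambda_i+1)} v_\lambda = 0$ for every $i \in \mathtt I$. These relations quantize the Serre-type and integrability relations cutting out $P(\lambda;\lambda)=W(\lambda)$ in Definition \ref{gWP}.

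Next I would form the $\C[t]$-lattice $\mathbf W_t(\lambda) := \mathbf U_t^{\ge 0} \cdot v_\lambda \subset W_t(\lambda)$ using Lusztig's divided-power integral form of $U_t^{\ge 0}$. The core claim — that this lattice is $\C[t]$-free of the correct rank — is precisely the quantum Weyl module theory developed by Chari-Loktev \cite{CL06}, Fourier-Littelmann \cite{FL07} in the simply-laced case, and Naoi \cite{Nao12} in the non simply-laced case, so this step is an invocation of those results in the same uniform form as in the proof of Theorem \ref{free}. Freeness of the lattice yields a well-defined specialization $\mathbf W_t(\lambda) \otimes_{\C[t]} \C_1$, which carries a $\g[z]$-module structure coming from the classical limit of Drinfeld generators, restricted to the non-negative part through $U_t^{\ge 0}$.

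Finally, by the universality of $W_t(\lambda)$ and the fact that the defining relations specialize exactly to those of $W(\lambda)$, one obtains a surjective $\g[z]$-homomorphism $\mathbf W_t(\lambda) \otimes_{\C[t]} \C_1 \twoheadrightarrow W(\lambda)$. Character equality in each graded piece is then forced by comparing both sides through their generic-point tensor decompositions (the quantum analogue of Theorem \ref{free}(4)(5)): each is free over an isomorphic polynomial ring $\C[\A^{(\lambda)}]$ whose generic fibres are isomorphic tensor products of fundamental local Weyl modules. The main obstacle is precisely the existence and freeness of the integral form in the middle step; once this is in hand, the $t\to 1$ comparison and character matching are formal. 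Compatibility of the grading used here with the $\exp(d)$-action on $U_t$ described in the preliminaries ensures the character identity holds in $\Q(\!(q)\!)[\Lambda]$.
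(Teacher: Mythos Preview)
Your overall architecture is essentially that of the paper: build a quantum global Weyl module $W_t(\lambda)$, pick a $\C[t]$-lattice, specialize at $t=1$, and compare with $W(\lambda)$ through the $\C[\A^{(\lambda)}]$-module structures and their generic fibres. The genuine gap is in the step you yourself flag as ``the main obstacle'': the $\C[t]$-freeness of the lattice. You attribute this to Chari--Loktev \cite{CL06}, Fourier--Littelmann \cite{FL07}, and Naoi \cite{Nao12}, but those papers work entirely on the \emph{classical} side---they establish the dimension and $\C[\A^{(\lambda)}]$-freeness of $W(\lambda)$ and $W(\lambda,x)$ (this is exactly Theorem~\ref{free}), not any statement about a $\C[t]$-lattice inside a $U_t^{\ge 0}$-module. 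In the paper's proof, the lattice is supplied by Kashiwara's global basis for extremal weight modules of quantum affine algebras \cite{Kas94,Kas05} (cf.~\cite{Kas02}), and the quantum analogue of Theorem~\ref{free}~2)---freeness of $W_t(\lambda)$ over its quantum symmetric polynomial subring---is Beck--Nakajima \cite{BN04} Theorem~4.16. Without these quantum inputs your middle step has no justification.

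A second, smaller issue: the natural comparison map goes $W(\lambda)\to W'(\lambda):=\mathbf W_t(\lambda)\otimes_{\C[t]}\C_1$, not the direction you wrote. Specializing a module presented by generators and relations can only impose \emph{more} relations, so the cyclic vector of $W'(\lambda)$ satisfies the defining relations of $W(\lambda)$ and the universal property of $W(\lambda)$ on the classical side yields a surjection $W(\lambda)\twoheadrightarrow W'(\lambda)$; ``universality of $W_t(\lambda)$'' does not by itself produce a map out of its specialization. The paper then upgrades this surjection to an isomorphism by comparing the $U(\h[z])$-actions on both sides (Theorem~\ref{free}~2) against Beck--Nakajima), which is the freeness-over-$\C[\A^{(\lambda)}]$ argument you sketch at the end, but again with the quantum ingredients coming from the correct sources.
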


\begin{proof}
By Kashiwara \cite{Kas94, Kas05}, we have $W_t ( \lambda )$ defined as a Demazure submodule of an extremal weight module of $U_t$, equipped with a global basis. Hence, we have a graded $\g [z]$-module $W' ( \lambda )$ obtained as the $t = 1$ specialization of (the $\C [t]$-lattice of) $W_t ( \lambda )$ (obtained by global bases). By our definition of $W ( \lambda )$ and \cite{Kas02} Corollary 5.2, we have a natural map $\eta : W ( \lambda ) \to W' ( \lambda )$ of graded $\g [z]$-modules. The comparison of the $U ( \h [z] )$-actions by Theorem \ref{free} 2) and Beck-Nakajima \cite{BN04} Theorem 4.16 implies that $\eta$ induces isomorphism between weight $\lambda$-part. Moreover, Chari-Pressley \cite{CP01} Lemma 4.6 implies that the induced map $W ( \lambda, 0 ) \rightarrow W' ( \lambda ) \otimes _{\C [\A^{(\lambda)}]} \C_0$ is surjective. Hence, we conclude $W ( \lambda, 0 ) \cong W' ( \lambda ) \otimes _{\C [\A^{(\lambda)}]} \C_0$ from the character comparisons in Chari-Loktev \cite{CL06}, Fourier-Littelmann \cite{FL07}, and Naoi \cite{Nao12}. Therefore, (the graded version of) Nakayama's lemma implies that $\eta$ is an isomorphism. This yields all the assertions.

Note that the comparison of Naito-Sagaki \cite{NS14} Theorem 6.4.1 and Chari-Ion \cite{CI14} Proposition 4.3 yields $\mathrm{ch} \, W_t ( \lambda ) = \mathrm{ch} \, W ( \lambda )$ directly.
\end{proof}

\section{Semi-infinite Schubert varieties}
We review the quasi-map realization of semi-infinite flag variety of $G$, for which the basic references are Finkelberg-Mirkovi\'c \cite{FM99} and Feigin-Finkelberg-Kuznetsov-Mirkovi\'c \cite{FFKM}.

We have $W$-equivariant isomorphisms $H^2 ( X, \Z ) \cong \Lambda$ and $H_2 ( X, \Z ) \cong Q ^{\vee}$. This identifies the ample cone of $X$ with $\Lambda_+ \subset \Lambda$ and the effective cone of $X$ with $Q_+^{\vee}$. A quasi-map $( f, D )$ is a map $f : \P ^1 \rightarrow X$ together with a $\Pi$-colored effective divisor
$$D = \sum_{\alpha^{\vee} \in \Pi^{\vee}, x \in \P^1} m_x (\alpha^{\vee}) \alpha^{\vee} \otimes (x) \in Q^{\vee} \otimes_\Z \mathrm{Div} \, \P^1 \hskip 3mm \text{with} \hskip 3mm m_x (\alpha^{\vee}) \in \Z_{\ge 0}.$$
For $i \in \mathtt I$, we set $D_i := \left< D, \varpi_i \right> \in \mathrm{Div} \, \P^1$. We call $D$ the defect of the quasi-map $(f, D)$. Here we define the degree of the defect by
$$|D| := \sum_{\alpha^{\vee} \in \Pi^{\vee}, x \in \P^1} m_x (\alpha^{\vee}) \alpha^{\vee} \in Q_+^{\vee}.$$

\begin{thm}[Drinfeld-Pl\"ucker data over fields, see Braverman-Gaitsgory \cite{BG} 1.1.2]\label{DPf}
Let $\bK$ be an overfield of $\C$. Then, the set of collections $\{ \bK v_{\lambda} \}_{\lambda \in \Lambda_{+}}$ of lines in $V ( \lambda ) \otimes_{\C} \bK$ so that
$$v_{\lambda} \otimes_{\bK} v_{\mu} \in \bK v_{\lambda + \mu} \subset V ( \lambda  + \mu ) \otimes_{\C} \bK \subset V ( \lambda ) \otimes_{\C} V( \mu ) \otimes_{\C} \bK \hskip 3mm \text{for each} \hskip 3mm \lambda, \mu \in \Lambda_{+}$$
is in bijection with the set of closed $\bK$-points of $X$. \hfill $\Box$
\end{thm}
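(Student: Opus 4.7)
The plan is to exhibit mutually inverse maps between $X(\bK)$ and the set of Plücker data, using highest-weight orbit maps into projective spaces and Galois descent.

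For the forward direction, given a $\bK$-point $x$, I would pick a lift $g \in G(\bar\bK)$ of its base change and a fixed highest weight vector $v_\lambda^{hw} \in V(\lambda)$ for each $\lambda \in \Lambda_+$. The $B$-stability of the line $\bar\bK \cdot v_\lambda^{hw}$ (via the character $\lambda$) shows that $\bar\bK v_\lambda := g \cdot (\bar\bK \cdot v_\lambda^{hw})$ depends only on the coset $gB$, hence only on $x$; $\mathrm{Gal}(\bar\bK/\bK)$-invariance then yields a line $\bK v_\lambda \subset V(\lambda) \otimes_\C \bK$. The Plücker relation is immediate from the $G$-equivariance of the Cartan-component inclusion $V(\lambda+\mu) \hookrightarrow V(\lambda) \otimes V(\mu)$, which sends $v_{\lambda+\mu}^{hw}$ to a nonzero multiple of $v_\lambda^{hw} \otimes v_\mu^{hw}$.

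Injectivity is easy: two $\bK$-points $x_1, x_2$ with the same Plücker datum yield lifts $g_1, g_2$ so that $g_1^{-1} g_2$ stabilizes every $\bar\bK \cdot v_\lambda^{hw}$. It suffices to take a single strictly dominant $\lambda_0$, since the stabilizer of $\bar\bK \cdot v_{\lambda_0}^{hw}$ in $G$ is precisely $B$.

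For surjectivity, fix a strictly dominant $\lambda_0$ so that the orbit map gives a closed immersion $X \hookrightarrow \P(V(\lambda_0))$. A Plücker datum determines a $\bK$-point of $\P(V(\lambda_0))$ via $\bK v_{\lambda_0}$, and the task is to show it lies in $X(\bK) \subset \P(V(\lambda_0))(\bK)$. By Galois descent, it suffices to check this after extending scalars to $\bar\bK$. Over an algebraically closed field, I would examine the joint stabilizer $B' := \bigcap_{\lambda \in \Lambda_+} \mathrm{Stab}_G(\bar\bK v_\lambda)$: the Plücker relations force the characters $\chi_\lambda : B' \to \Gm$ (by which $B'$ acts on each line $\bar\bK v_\lambda$) to satisfy $\chi_\lambda + \chi_\mu = \chi_{\lambda+\mu}$, so $B'$ admits a character lattice of full rank, and a dimension count then pins $B'$ down to a Borel subgroup.

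The main obstacle is precisely this last step, i.e.\ verifying that the Plücker relations are strong enough to cut out exactly the flag orbit inside $\P(V(\lambda_0))$ (equivalently, to force the joint stabilizer to be a genuine Borel rather than a larger parabolic). This is a form of the classical Plücker embedding theorem for $G/B$, which we can invoke from standard references such as Jantzen's \emph{Representations of Algebraic Groups} or Kumar's book on Kac-Moody flag varieties; once this is established, the element of $G(\bar\bK)$ carrying the base point of $X$ to the chosen Plücker datum descends to $\bK$ by the injectivity argument, producing the desired $\bK$-point of $X$.
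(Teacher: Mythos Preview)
The paper does not actually prove this theorem: it is stated with an attribution to Braverman--Gaitsgory \cite{BG} 1.1.2 and closed immediately with a $\Box$. So there is no ``paper's own proof'' to compare against; the result is simply quoted as background.

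Your sketch is a reasonable outline of the standard argument. The forward map and injectivity are correct as written. For surjectivity you correctly isolate the real content---that the Pl\"ucker relations cut out precisely the highest-weight orbit in $\prod_i \P(V(\varpi_i))$ (equivalently in $\P(V(\lambda_0))$ for a regular $\lambda_0$)---and you are right that this is a classical fact one cites rather than reproves. One small remark: your stabilizer argument as phrased (``$B'$ admits a character lattice of full rank, and a dimension count then pins $B'$ down to a Borel'') is not quite enough on its own, since a priori $B'$ could be a proper subgroup of a Borel or could fail to be connected; the actual mechanism in the references is that the Pl\"ucker conditions force the line $\bar\bK v_{\lambda_0}$ to lie in the closed $G$-orbit, whence its stabilizer is conjugate to $B$. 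But since you explicitly defer this step to Jantzen or Kumar, the proposal is fine as a plan.
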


\begin{defn}[Drinfeld-Pl\"ucker data]\label{Zas}
Consider a collection $\mathcal L = \{( \psi_{\lambda}, \mathcal L^{\lambda} ) \}_{\lambda \in \Lambda_+}$ of inclusions $\psi_{\lambda} : \mathcal L ^{\lambda} \hookrightarrow V ( \lambda ) \otimes _{\C} \mathcal O _{\P^1}$ of line bundles $\mathcal L ^{\lambda}$ over $\P^1$. The data $\mathcal L$ is called a Drinfeld-Pl\"ucker data (DP-data) if the canonical inclusion of $G$-modules
$$\eta_{\lambda, \mu} : V ( \lambda + \mu ) \hookrightarrow V ( \lambda ) \otimes V ( \mu )$$
induces an isomorphism
$$\eta_{\lambda, \mu} \otimes \mathrm{id} : \psi_{\lambda + \mu} ( \mathcal L ^{\lambda + \mu} ) \stackrel{\cong}{\longrightarrow} \psi _{\lambda} ( \mathcal L^{\lambda} ) \otimes_{\cO_{\P^1}} \psi_{\mu} ( \mathcal L^{\mu} )$$
for every $\lambda, \mu \in \Lambda_+$.
\end{defn}

For each $\beta \in Q_+^{\vee}$, we set
$$\sQ ( X, \beta ) : = \{ f : \P ^1 \rightarrow X \mid \text{ quasi-map s.t. } f _* [ \P^1 ] + | D | = \beta \},$$
where $f_* [\P^1]$ is the class of the image of $\P^1$ multiplied by the degree of $\P^1 \to \mathrm{Im} \, f$. We sometimes denote $\sQ ( X, \beta )$ by $\sQ ( \beta )$ in case there is no danger of confusion, and also for various varieties and indschemes of the form $\sQ_{?} (X, w, ?)$ defined below. The topology of this space arises from:

\begin{thm}[Drinfeld, see Finkelberg-Mirkovi\'c \cite{FM99}]
The variety $\sQ ( X, \beta )$ is isomorphic to the variety formed by isomorphism classes of the DP-data $\mathcal L = \{( \psi_{\lambda}, \mathcal L^{\lambda} ) \}_{\lambda \in \Lambda_+}$ such that $\deg \, \mathcal L ^{\lambda} = - \left< \beta, \lambda \right>$.
\end{thm}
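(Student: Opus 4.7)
The plan is to exhibit mutually inverse constructions between quasi-maps of total degree $\beta$ and DP-data of the prescribed degrees, first on the level of $\mathbb C$-points and then check it lifts to a morphism of the moduli schemes.

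Given a quasi-map $(f, D) \in \sQ(X,\beta)$, let $U := \P^1 \setminus \Supp D$, so $f|_U : U \to X$ is a genuine morphism. For each $\lambda \in \Lambda_+$ the line bundle $\cO_X(-\lambda)$ embeds canonically into $V(\lambda) \otimes \cO_X$ (dual to the Borel--Weil evaluation $V(\lambda)^* \otimes \cO_X \twoheadrightarrow \cO_X(\lambda)$), and this embedding is $G$-equivariant and satisfies the Pl\"ucker relations of Definition \ref{Zas} tautologically. Pulling back along $f|_U$ yields $(\psi_\lambda^U, \mathcal L^{\lambda, U})$ on $U$. To extend across $x \in \Supp D$, set $D_\lambda := \langle D, \lambda\rangle = \sum_{x} \bigl(\sum_{\alpha^\vee \in \Pi^\vee} m_x(\alpha^\vee) \langle \alpha^\vee, \lambda\rangle\bigr)(x)$ and define $\mathcal L^\lambda := (f|_U)^*\cO_X(-\lambda)\bigl(-\sum_x D_\lambda\bigr)$, extended across $\Supp D$ so that $\psi_\lambda$ still takes values in $V(\lambda) \otimes \cO_{\P^1}$ (the twist makes the extension possible and the resulting inclusion of line bundles well defined). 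The Pl\"ucker relations on $U$ extend uniquely to all of $\P^1$ because inclusions of line bundles into trivial bundles are determined by their behavior on a dense open. A direct degree count gives $\deg \mathcal L^\lambda = -\langle f_*[\P^1], \lambda\rangle - \langle |D|, \lambda\rangle = -\langle \beta, \lambda\rangle$.

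For the inverse, start with a DP-data $\{(\psi_\lambda, \mathcal L^\lambda)\}_{\lambda \in \Lambda_+}$ with $\deg \mathcal L^\lambda = -\langle \beta, \lambda\rangle$. Pass to the function field $\C(\P^1)$: the generic stalks give lines $\mathcal L^\lambda \otimes \C(\P^1) \subset V(\lambda) \otimes \C(\P^1)$ satisfying the Pl\"ucker compatibility, hence by Theorem \ref{DPf} they determine a unique $\C(\P^1)$-point of $X$, i.e.\ a rational map $f : \P^1 \dashrightarrow X$. For each $x \in \P^1$, localize at $x$ and normalize each $\psi_\lambda$ by the local uniformizer so its image is not divisible; the normalized data gives a DP-data over the residue field $\C$, and thus by Theorem \ref{DPf} a point $f(x) \in X$, extending $f$ to a genuine morphism. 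The integers $m_x(\lambda)$ by which we had to normalize satisfy $m_x(\lambda + \mu) = m_x(\lambda) + m_x(\mu)$ (by the DP compatibility), so they come from an element $D_x \in Q^\vee_+$ via $m_x(\lambda) = \langle D_x, \lambda\rangle$; writing $D_x = \sum_\alpha m_x(\alpha^\vee)\alpha^\vee$ in the basis $\Pi^\vee$ gives a colored defect $D = \sum_{x,\alpha} m_x(\alpha^\vee)\alpha^\vee \otimes (x)$. Positivity of $m_x(\alpha^\vee)$ is the assertion that the divisibility orders on the fundamental Pl\"ucker embeddings are nonnegative, which holds because $\psi_\lambda$ is an honest inclusion of line bundles (no poles).

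Matching the degrees once more shows $f_*[\P^1] + |D| = \beta$, so we land in $\sQ(X,\beta)$, and the two constructions are clearly mutually inverse on closed points. The main obstacle is lifting this to an isomorphism of schemes: one must run the same construction in families over an arbitrary test scheme $S$, replacing $\P^1$ by $\P^1 \times S$ and "function field" by "generic fiber over each component of $S$," and verify that the defect data forms a $\Pi$-colored relative effective divisor flat over $S$. The positivity/divisibility argument globalizes because inclusions of line bundles into locally free sheaves have well-defined cokernels, and the colored divisor structure arises from the universal property of $\sQ(X,\beta)$ as described in Finkelberg--Mirkovi\'c \cite{FM99}; this matches the DP moduli functor on the nose, giving the claimed isomorphism of varieties.
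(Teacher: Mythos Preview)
The paper does not give its own proof of this statement: the theorem is attributed to Drinfeld with a reference to Finkelberg--Mirkovi\'c \cite{FM99}, and no proof environment follows it; the sentence immediately preceding it (``The topology of this space arises from:'') makes clear that the DP-data description is being \emph{imported} to endow the set $\sQ(X,\beta)$ with a variety structure, not derived here. So there is nothing in the paper to compare your argument against.

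On its own merits, your sketch is the standard one and is essentially correct, but note two small points. First, in the paper's convention a quasi-map $(f,D)$ has $f:\P^1\to X$ already a genuine morphism everywhere, with $D$ a separate datum; so there is no need to restrict to $U=\P^1\setminus\Supp D$ before pulling back --- you can simply set $\mathcal L^\lambda:=f^*\cO_X(-\lambda)(-D_\lambda)$ with $D_\lambda=\langle D,\lambda\rangle$ and the Pl\"ucker inclusion $\psi_\lambda$ is the composite $f^*\cO_X(-\lambda)(-D_\lambda)\hookrightarrow f^*\cO_X(-\lambda)\hookrightarrow V(\lambda)\otimes\cO_{\P^1}$. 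Your phrasing suggests you may have conflated this with the ``rational map'' picture. Second, in the inverse direction you should say explicitly why the pointwise assignment $x\mapsto f(x)$ assembles into a \emph{morphism} $\P^1\to X$: after clearing the defect, the saturated sub-line-bundles $\widetilde{\mathcal L}^\lambda\subset V(\lambda)\otimes\cO_{\P^1}$ are subbundles (their cokernels are locally free), so they define a morphism to $\prod_i\P(V(\varpi_i))$ which lands in $X$ by the Pl\"ucker relations and Theorem~\ref{DPf}. With those adjustments your point-level bijection is fine, and the appeal to \cite{FM99} for the functor-of-points upgrade is exactly what the paper itself does.
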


For each $\beta, \beta' \in Q_+ ^{\vee}$, we have an embedding
$$\imath^{\beta,\beta'} : \sQ ( \beta ) \hookrightarrow \sQ ( \beta + \beta' ),$$
that simply adds the defect by $\beta' \otimes (\infty)$. We set $\sQ (X) := \varinjlim_{\beta} \sQ ( X, \beta )$ and call it the (indscheme model of the) semi-infinite flag variety of $G$. We have a natural $G [z]$-action on $\sQ$ that preserves the defect.

Let $\sQ_0 ( X )$ denote the subspace of $\sQ ( X )$ whose defect is supported outside of $0 \in \P^1$. We have a natural evaluation map
$$\mathtt{ev} _0 : \sQ _{0} \longrightarrow X,$$
that is $G [z]$-equivariant. It restricts to $\sQ_{0} ( \beta ) \subset \sQ ( \beta )$ for each $\beta \in Q_+ ^{\vee}$. For each $w \in W$, we define $\sQ ( X, w ) := \overline{\mathtt{ev}_{0}^{-1} ( X (w) )}$ and call it the semi-infinite Schubert variety.

For each $\lambda \in \Lambda$, we have a $G[z]$-equivariant line bundle $\cO _{\sQ ( \beta )} ( \lambda )$ (and its pro-object $\cO _{\sQ} ( \lambda )$) obtained by the (tensor product of the) pull-backs $\cO _{\sQ ( \beta )}( \varpi_i )$ of the $i$-th $\cO ( 1 )$ via the embedding
\begin{equation}
\sQ ( \beta ) \hookrightarrow \prod_{i \in \mathtt I} \P ( V ( \varpi_i ) \otimes_{\C} \C [z] _{\le \left< \beta, \varpi_i \right>} ),\label{Pemb}
\end{equation}
for each $\beta \in Q_+^{\vee}$ (see e.g. \cite{BF14b} \S 2.1).

We set $\cO _{\sQ ( w )} ( \lambda )$ and $\cO _{\sQ (w, \beta)} ( \lambda )$ ($\beta \in Q_+^{\vee}$) to be the pullback of $\cO _{\sQ} ( \lambda )$ to $\sQ ( w )$ and $\sQ ( w, \beta )$, respectively. For each $\beta \in Q_+^{\vee}$, let us consider an affine closed subset $\mathbf I ^{\le \beta} \subset \mathbf I$ so that its action on $V ( \lambda ) \otimes \C [z]$ contains matrix entries of degree at most $\left< \beta, \lambda \right>$. We have $\mathbf I^{\le \beta} \cdot \mathbf I ^{\le \beta'} \subset \mathbf I ^{\le \beta + \beta'}$ for each $\beta, \beta' \in Q_+^{\vee}$ and $\mathbf I = \bigcup_{\beta \in Q^{\vee}_+} \mathbf I ^{\le \beta}$. Taking (\ref{Pemb}) into account, we deduce an ind-action
$$\mathbf I ^{\le \gamma} \cdot \sQ ( \beta ) \longrightarrow \sQ ( \beta + \gamma ) \hskip 3mm \text{for each} \hskip 3mm \beta, \gamma \in Q_+^{\vee}$$
that is compatible with $\imath ^{\beta,\beta'}$.

The ind-action of $\mathbf I$ on $\sQ$ preserves $\sQ ( w )$ for each $w \in W$ since $\mathtt{ev} _0 ( \mathbf I ) = B$.

\begin{thm}[Braverman-Finkelberg \cite{BF14a} Theorem 1.2]\label{BFnormal}
For each $\beta \in Q_+^{\vee}$, the variety $\sQ ( \beta )$ is normal. \hfill $\Box$
\end{thm}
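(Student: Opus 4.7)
The plan is to deduce normality of $\sQ(X,\beta)$ from the normality of a simpler local model via the factorization structure of the quasi-map space. First, I would observe that sending a quasi-map $(f, D)$ to its defect divisor induces a morphism $\pi : \sQ(X,\beta) \to \mathrm{Sym}^{\beta} \P^{1}$, where $\mathrm{Sym}^{\beta} \P^{1}$ denotes the $Q_{+}^{\vee}$-colored symmetric power of $\P^{1}$. \'Etale-locally over a point $\sum_{j} \beta_{j} \otimes (x_{j})$ with pairwise distinct $x_{j}$, the space $\sQ(X,\beta)$ factorizes as a product of quasi-map spaces $\sQ(X,\beta_{j})$ (restricted to formal neighborhoods of $x_{j}$) times a smooth piece coming from honest maps $\P^{1} \to X$ away from the $x_{j}$. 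Since normality is \'etale-local and a product of normal schemes is normal, this reduces the problem to showing normality in a formal neighborhood of a quasi-map whose defect is concentrated at a single point.

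Next, I would identify such a local neighborhood with an open piece of a Zastava space $Z^{\beta}$ of based quasi-maps. Placing the concentrated defect at $0 \in \P^{1}$ and using the Drinfeld-Pl\"ucker embedding \eqref{Pemb}, the space $Z^{\beta}$ sits as the closed subscheme of $\prod_{i \in \mathtt I} \P( V( \varpi_{i} ) \otimes \C[z]_{\le \langle \beta, \varpi_{i} \rangle})$ cut out by the DP-relations $\eta_{\lambda, \mu}$ of Definition \ref{Zas}, augmented by a basing condition at $\infty$. A formal neighborhood of the fully concentrated quasi-map in $\sQ(X,\beta)$ is isomorphic to $Z^{\beta}$ near its most degenerate point, tensored with a smooth factor coming from the free action of a unipotent subgroup of $G[\![z]\!]$. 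Thus the normality of $\sQ(X,\beta)$ reduces to the normality of $Z^{\beta}$.

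Finally, I would prove normality of $Z^{\beta}$ via a resolution of singularities. The space $Z^{\beta}$ admits a $\Gm$-action contracting it onto a unique central point, and it carries a Bott-Samelson-type desingularization $p : \widetilde{Z}^{\beta} \to Z^{\beta}$ obtained by realizing a total defect $\beta = \sum n_{i} \alpha_{i}^{\vee}$ as a successive addition of simple coroot contributions---concretely, an iterated $\P^{1}$-bundle over transversal slices to the boundary strata. Using the $\Gm$-contraction together with a Koszul-type resolution of the structure sheaf of the central fiber, one verifies $R^{>0} p_{*} \cO_{\widetilde{Z}^{\beta}} = 0$, so that $Z^{\beta}$ has rational singularities and is in particular normal. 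The main obstacle will be precisely this cohomological vanishing step: the desingularization $\widetilde{Z}^{\beta}$ is neither small nor obviously semismall, so one must organize the $Q_{+}^{\vee}$-stratification by defect carefully, and exploit the $\Gm$-weights provided by the grading of $\C[z]$, in order to reduce the problem to an explicit fiberwise calculation that one can carry out uniformly across the rank of $G$.
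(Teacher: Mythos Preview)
The paper does not prove this theorem: it is quoted from \cite{BF14a} with no argument given (the $\Box$ immediately follows the statement). There is therefore no in-paper proof to compare against.

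Your overall architecture---factorize over the colored symmetric power of $\P^{1}$, reduce to the Zastava space $Z^{\beta}$ near its central point, then establish rational singularities via a resolution---does match the shape of the Braverman--Finkelberg argument. The gap is in your third step. There is no ``Bott--Samelson-type'' resolution of $Z^{\beta}$ built as an iterated $\P^{1}$-bundle by adding simple coroots one at a time: such a tower is not smooth outside type $\mathsf A$, and even in type $\mathsf A$ the map you describe is not a $\P^{1}$-fibration (the fiber dimension jumps over boundary strata). The resolutions actually employed in \cite{BF14a} and the surrounding literature are the Kontsevich moduli of stable maps, Laumon's based quasi-flag space, or the identification of $Z^{\beta}$ with a transversal slice in the affine Grassmannian; the vanishing $R^{>0}p_{*}\cO = 0$ for those is the substantive content of the theorem and does not reduce to a uniform fiberwise $\P^{1}$-calculation as you suggest. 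So the outline is sound, but the specific resolution you name does not exist as stated; you would need to substitute one of the standard ones and then import (or reprove) the corresponding vanishing.
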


By taking the formal expansions of maps along $0$, we have a natural $G[z]$-equivariant embedding $\sQ \hookrightarrow \mathbf Q$ into an infinite type scheme $\mathbf Q$ that contains $G [\![z]\!] / ( H \cdot U [\![z]\!] )$ as its open subset. The scheme $\mathbf Q$ admits a natural $G [\![z]\!]$-action extending that of $G[z]$ (that is realized by replacing $\C[z]_{\le k}$ with $\C [\![z]\!]$ in (\ref{Pemb})). We have a $G[\![z]\!]$-subscheme $\mathbf Q_{0} \subset \mathbf Q$ that has an evaluation at $z = 0$. Hence, we have $\mathbf Q ( w )$ in a parallel fashion to $\sQ$. They admit a natural action of the completed version $\mathbf I ^{\wedge}$ of $\mathbf I$ (we also define the completed version $\mathbf I^{\wedge}_i$ of $\mathbf I_i$ for each $i \in \mathtt I_{\mathrm{aff}}$). By construction, we have $\sQ ( w ) = \sQ \cap \mathbf Q ( w )$ for each $w \in W$.

\begin{lem}\label{Zdense}
The ind-action of $\mathbf I$ on $\sQ ( w )$ has a Zariski dense orbit.
\end{lem}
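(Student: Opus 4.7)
The plan is to exhibit an explicit point of $\sQ ( w )$ whose ind-orbit under $\mathbf{I}$ is open and dense. The natural candidate I would use is the constant quasi-map $p_w : \P^1 \to \{ x_w \} \subset X$ at the $T$-fixed point $x_w = \dot{w} B / B$, viewed as an element of $\sQ ( w, 0 ) \subset \sQ ( w )$. Since $\mathtt{ev}_0 ( \mathbf{I} ) = B$ and $\mathbb{O} ( w ) = B \cdot x_w$ is open dense in $X ( w )$, the ind-orbit $\mathbf{I} \cdot p_w$ projects surjectively onto $\mathbb{O} ( w )$ through $\mathtt{ev}_0$, which makes it a plausible candidate.

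First I would reduce to a finite-type question. Since $\sQ ( w ) = \varinjlim_\gamma \sQ ( w, \gamma )$ along the ind-embeddings $\imath^{\gamma, \gamma'}$, which are $\mathbf{I}$-equivariant in the ind-sense, it suffices to show that for every sufficiently large $\gamma \in Q_+^{\vee}$, the orbit morphism
$$
\mathbf{I}^{\le \gamma} \longrightarrow \sQ ( w, \gamma ) , \qquad g \longmapsto \bigl( z \mapsto g ( z ) \cdot x_w \bigr)
$$
has open dense image. Combining Theorem \ref{BFnormal}, the irreducibility of $X ( w )$, and the standard dimension computation for quasi-map spaces, the target $\sQ ( w, \gamma )$ is irreducible of dimension $\ell ( w ) + \left< \gamma, 2 \rho \right>$.

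Next I would verify the tangent match at $p_w$: the differential of the orbit map factors as the quotient of $\Lie \mathbf{I}^{\le \gamma}$ by the Lie algebra of the stabilizer $\mathbf{I}^{\le \gamma} \cap ( \dot{w} B \dot{w}^{-1} ) [z]$ of $p_w$. A direct root-by-root count identifies this quotient with a space of dimension $\ell ( w ) + \left< \gamma, 2 \rho \right>$, matching that of $\sQ ( w, \gamma )$. Combined with irreducibility of the target, this forces the constructible orbit to be open and dense, and passing to the ind-limit yields the claim for $\sQ ( w )$.

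The hard part will be the bookkeeping of the scheme structure of $\mathbf{I}^{\le \gamma}$ and its stabilizer, in particular verifying the dimension count uniformly in $\gamma$ and propagating openness along the ind-embeddings $\imath^{\gamma, \gamma'}$. A more conceptual alternative that avoids explicit tangent calculations would be to pass to $\mathbf{Q} ( w ) \supset \sQ ( w )$ and invoke the semi-infinite Bruhat decomposition of $\mathbf{Q}$ under $\mathbf{I}^\wedge$: the open cell of $\mathbf{Q} ( w )$ is a single $\mathbf{I}^\wedge$-orbit, and intersecting with $\sQ$ through the identity $\sQ ( w ) = \sQ \cap \mathbf{Q} ( w )$ (already invoked in the paragraph preceding the statement) would produce the open dense $\mathbf{I}$-orbit directly.
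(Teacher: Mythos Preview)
Your alternative sketched in the final paragraph is exactly the paper's proof: pass to the formal model $\mathbf Q ( w )$, use that the open $\mathbf I^{\wedge}$-cell there is a single orbit, and then pull density/openness back to $\sQ ( w )$ via the dense inclusion $\sQ ( w ) = \sQ \cap \mathbf Q ( w )$.  The paper's argument is two sentences and never touches a dimension.

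Your primary approach---choosing the constant quasi-map $p_w$ and matching dimensions inside each finite piece $\sQ(w,\gamma)$---is a genuinely different route.  It is essentially sound (for $G=\mathrm{SL}_2$ one checks directly that $\mathbf I\cdot p_e$ is precisely the locus of defect-free maps with $f(0)=x_e$, which is open dense), and it has the virtue of naming the open orbit explicitly.  But the bookkeeping you flag is real and not merely cosmetic.  First, $\mathbf I^{\le\gamma}$ as defined in the paper is only an affine closed \emph{subset} of $\mathbf I$, not a subgroup, so ``$\mathrm{Lie}\,\mathbf I^{\le\gamma}$'' and the differential-at-$e$ calculation have to be reformulated before the root count makes sense.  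Second, the point $p_w$ (after $\imath^{0,\gamma}$) sits in the deepest defect stratum of $\sQ(w,\gamma)$ and may well be singular for $\gamma>0$; matching the orbit dimension with $\dim\sQ(w,\gamma)$ therefore gives density but does not immediately give openness via a tangent-space argument---you would still need either local closedness of $\mathbf I$-orbits or, in effect, the $\mathbf Q(w)$ argument of your last paragraph.  Third, the formula $\dim\sQ(w,\gamma)=\ell(w)+\langle\gamma,2\rho\rangle$ is not recorded in the paper and would have to be imported separately.  The paper's route sidesteps all three issues by trading finite-type dimension counting for the (taken as known) semi-infinite Bruhat decomposition on the formal side.
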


\begin{proof}
The inclusion $\sQ ( w ) \subset \mathbf Q ( w )$ is dense, and the latter has an open dense orbit with respect to the $\mathbf I^{\wedge}$-action. There exists an $\mathbf I$-indorbit whose closure in $\mathbf Q ( w )$ contains an open dense $\mathbf I^{\wedge}$-orbit as the closure of the orbit of the set of $\C [z]$-valued points of one-parameter subgroups of $G$ is the same as that of $\C [\![z]\!]$-valued points. Such an $\mathbf I$-indorbit must be Zariski dense as required.
\end{proof}

We define
$$H^i ( \sQ, \cO _{\sQ} ( \lambda) ) := \varprojlim_{\beta} H^i ( \sQ ( \beta ), \cO _{\sQ ( \beta )} ( \lambda ) ) \hskip 3mm \text{for every} \hskip 3mm i \in \Z.$$

\begin{thm}[Braverman-Finkelberg \cite{BF14b, BF14c}]\label{BFmain}
For each $\lambda \in \Lambda$, we have a natural isomorphism
$$H^i ( \sQ, \cO _{\sQ} ( \lambda) )^* \cong \begin{cases} W ( \lambda ) & (i=0, \lambda \in \Lambda_+) \\ \{ 0 \} & (\text{otherwise})\end{cases}$$
of graded $\g [z]$-modules $($where the grading arises from the loop rotation$)$. \hfill $\Box$
\end{thm}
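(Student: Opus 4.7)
The plan is to reduce the computation on the indscheme $\sQ$ to the level of the projective varieties $\sQ(\beta)$, and then transport the geometric computation to representation theory via the projective embedding (\ref{Pemb}). By the definition preceding the statement, $H^i(\sQ, \cO_{\sQ}(\lambda)) = \varprojlim_\beta H^i(\sQ(\beta), \cO_{\sQ(\beta)}(\lambda))$, so it suffices to work $\beta$-by-$\beta$ and then pass to the inverse limit, keeping track of the $G[z]$- and loop-rotation gradings compatibly.

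First, I would treat $H^0$ for $\lambda \in \Lambda_+$. The key geometric input is surjectivity of the multiplication map
\begin{equation*}
\bigotimes_{i \in \mathtt I} \mathrm{Sym}^{\lambda_i}\bigl(V(\varpi_i)^* \otimes \C[z]_{\le \langle \beta, \varpi_i \rangle}\bigr) \longrightarrow H^0(\sQ(\beta), \cO_{\sQ(\beta)}(\lambda))
\end{equation*}
coming from (\ref{Pemb}), i.e. projective normality of $\sQ(\beta)$ in this embedding. Granting this, dualizing yields a $\g[z]$-equivariant injection of $H^0(\sQ(\beta), \cO_{\sQ(\beta)}(\lambda))^*$ into the corresponding tensor product of symmetric powers of $V(\varpi_i) \otimes \C[z]_{\le \langle \beta, \varpi_i \rangle}^*$, which is a natural quotient of $P(\lambda)$. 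The DP-data cutout (Definition \ref{Zas}) forces this quotient to kill every $\g$-isotypic component $V(\gamma)$ with $\gamma \not\le \lambda$, i.e.\ to satisfy the condition (\ref{cut}) with $\mu = \lambda$. Hence the image is a graded quotient of $W(\lambda) = P(\lambda;\lambda)$.

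Second, I would promote this to an equality in the limit. The highest weight vector of $V(\lambda)$ supplies a cyclic section at the minimal $\beta$, and the $G[z]$-translates of its dual generate the whole target in a manner compatible with the dualized transition maps $(\imath^{\beta,\beta'})^*$, which are injective because $\imath^{\beta,\beta'}$ is a closed embedding that pulls the section sheaves back surjectively. A character count against Theorem \ref{free}(2)(3) and Theorem \ref{qwc} then forces the inverse limit $\varprojlim_\beta H^0(\sQ(\beta), \cO(\lambda))^*$ to exhaust $W(\lambda)$ rather than any strict quotient, yielding the required identification as graded $\g[z]$-modules.

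Third, the vanishing. For $\lambda \in \Lambda \setminus \Lambda_+$ any nonzero global section would restrict to a nonzero section of $\cO_X(\lambda)$ along the fiber of $\mathtt{ev}_0 \colon \sQ_0 \to X$ over a generic point, which is excluded by classical Borel--Weil--Bott. For the higher-cohomology vanishing I would invoke a Kodaira-type vanishing on the normal projective variety $\sQ(\beta)$ (an analogue along the lines of \cite{Kum02}, foreshadowed by the paper's Proposition \ref{coh-tw}), using Theorem \ref{BFnormal} together with rationality of singularities of $\sQ(\beta)$ from \cite{BF14a}; the vanishing on each level then passes to the inverse limit.

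The main obstacle is the projective normality assertion underlying the first step: as the author notes in the introduction, it is the semi-infinite analogue of the statement whose classical counterpart is equivalent to the Demazure character formula. Without it, one obtains $W(\lambda)$ only as a subquotient of the Plücker tensor product, not as the genuine space of sections, and every later identification (cyclicity, character matching, vanishing) relies on this projective-normality input being in place.
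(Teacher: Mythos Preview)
The paper does not prove this statement: it is attributed to Braverman--Finkelberg \cite{BF14b,BF14c} and closed with a $\Box$, so it functions here as a black-box input to everything that follows (Corollary \ref{amplecone}, Theorem \ref{pnw}, Theorem \ref{main}, and onward).

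Your outline is a reasonable heuristic for the shape such an argument takes, and you are right that something playing the role of projective normality is the heart of the matter. But as written your sketch is circular relative to this paper. The Kodaira-type vanishing you invoke, Proposition \ref{coh-tw}, lives in Section 6 and is proved using Theorem \ref{main}, whose proof in turn opens by appealing to Theorem \ref{BFmain} in the base case $w = w_0$. Likewise, the projective normality of $\sQ$ in Theorem \ref{pnw} is deduced \emph{from} Theorem \ref{BFmain} (via the identification of the $\Gm$-finite part of the homogeneous coordinate ring with $\bigoplus_{\lambda} W(\lambda)^*$), so it cannot be fed back in to establish it. Your character-count step also leans on Theorem \ref{qwc}, which is fine logically, but the matching of characters alone does not give the module isomorphism without already knowing the section space is cyclic of the right size---which is exactly the projective-normality input you flag as missing.

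The actual proof in \cite{BF14b,BF14c} does not go through projective normality of the Pl\"ucker embedding of $\sQ(\beta)$. It uses instead the Zastava factorization structure and the normality/rational-singularities result for $\sQ(\beta)$ (Theorem \ref{BFnormal}, established independently in \cite{BF14a}); the global sections are identified with the dual Weyl module by a direct argument, and the higher-cohomology vanishing comes from a Frobenius-splitting/rational-singularities mechanism rather than a Kodaira-type statement. So while your sketch captures the logical dependencies correctly (``everything hinges on the projective-normality-type input''), the route taken in the cited references is genuinely different from the one you propose.
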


\begin{cor}\label{amplecone}
A line bundle $\cO_{\sQ} ( \lambda )$ is very ample if and only if $\left< \alpha ^{\vee}_i, \lambda \right> > 0$ for every $i \in \mathtt I$.
\end{cor}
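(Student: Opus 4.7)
The plan is to compare $\cO_{\sQ}(\lambda)$ directly against the Drinfeld-Pl\"ucker embedding (\ref{Pemb}) on each level $\sQ(\beta)$, and to use restriction to the zero-defect stratum for the converse. Throughout I interpret very ampleness on the indscheme $\sQ = \varinjlim \sQ(\beta)$ level-wise, i.e.\ as very ampleness of $\cO_{\sQ(\beta)}(\lambda)$ on each $\sQ(\beta)$, $\beta \in Q_+^{\vee}$.

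The starting observation is $\sQ(X,0) = X = G/B$: a quasi-map contributing to $\sQ(0)$ satisfies $f_{*}[\P^{1}] + |D| = 0$, and both summands lie in the effective cone $Q_+^{\vee}$, forcing $f$ constant and $D = 0$. The embedding (\ref{Pemb}) for $\beta = 0$ specialises to the standard Pl\"ucker embedding $X \hookrightarrow \prod_{i \in \mathtt I} \P(V(\varpi_{i}))$, so $\cO_{\sQ(0)}(\varpi_{i}) = \cO_{X}(\varpi_{i})$. Under the closed embedding $\imath^{0,\beta}: \sQ(0) \hookrightarrow \sQ(\beta)$ (adding defect $\beta \otimes (\infty)$), the line bundle $\cO_{\sQ(\beta)}(\lambda)$ restricts to $\cO_{X}(\lambda)$, since the added defect is reflected only through the evident inclusion $\P(V(\varpi_{i})) \hookrightarrow \P(V(\varpi_{i}) \otimes \C[z]_{\le \langle \beta, \varpi_{i} \rangle})$ given by $v \mapsto [v \otimes 1]$.

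For the only-if direction, very ampleness of each $\cO_{\sQ(\beta)}(\lambda)$ combined with restriction to the closed subscheme $\imath^{0,\beta}(X) \subset \sQ(\beta)$ forces $\cO_{X}(\lambda)$ to be very ample on the finite-dimensional flag variety $X$, which occurs precisely when $\langle \alpha^{\vee}_{i}, \lambda \rangle > 0$ for every $i \in \mathtt I$ by the classical theory of $G/B$. Conversely, assuming every $\lambda_{i} := \langle \alpha^{\vee}_{i}, \lambda \rangle > 0$, I compose the closed immersion (\ref{Pemb}) with the degree-$\lambda_{i}$ Veronese on the $i$-th factor followed by the Segre embedding into a single projective space; this yields a closed immersion of $\sQ(\beta)$ whose $\cO(1)$-pullback is $\bigotimes_{i} \cO_{\sQ(\beta)}(\varpi_{i})^{\otimes \lambda_{i}} = \cO_{\sQ(\beta)}(\lambda)$. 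The functoriality of this construction in $\beta$ --- all ingredients being induced from the universal Drinfeld-Pl\"ucker data, and hence compatible with $\imath^{\beta,\beta'}$ --- assembles these pieces into very ampleness on the indscheme $\sQ$.

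The only delicate point is fixing the indscheme convention of very ampleness; once this is done, no substantive obstacle remains, and the statement reduces to the elementary Veronese-Segre construction together with the identification $\sQ(0) = X$ and the classical criterion for very ampleness of $\cO_{X}(\lambda)$.
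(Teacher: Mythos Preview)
Your argument is correct and is genuinely different from the paper's in the ``if'' direction. You work level-wise on each $\sQ(\beta)$, taking the Drinfeld--Pl\"ucker closed immersion (\ref{Pemb}) and composing with the $\lambda_i$-th Veronese on each factor followed by the Segre embedding; this is entirely elementary and uses nothing beyond the construction of $\cO_{\sQ(\beta)}(\varpi_i)$ as the pullback of $\cO(1)$ from the $i$-th factor. The paper instead argues at the indscheme level: it invokes Theorem~\ref{free} 1) and 5) together with Theorem~\ref{BFmain} to produce an embedding $\sQ \subset \P(W(\rho))$, establishing very ampleness of $\cO_{\sQ}(\rho)$ first, and then passes to general regular $\lambda$ by multiplying by a nonzero global section of $\cO_{\sQ}(\lambda-\rho)$ supplied by Theorem~\ref{BFmain}. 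Your approach is more self-contained (it does not need the Braverman--Finkelberg cohomology computation or the structure theory of global Weyl modules), whereas the paper's approach has the benefit of exhibiting $\sQ$ inside the projectivization of the representation-theoretic object $W(\rho)$, which ties directly into the surrounding narrative. For the ``only if'' direction both proofs are the same: restrict to the locus $X \cong \sQ(0)$ of constant maps and invoke the classical criterion on $G/B$.
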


\begin{proof}
Thanks to (\ref{Pemb}), we know that
$$\sQ \hookrightarrow \prod_{i \in \mathtt I} \P ( V ( \varpi_i ) \otimes \C [z] ).$$
Theorem \ref{free} 1) asserts that $\Gamma ( \sQ, \cO _{\sQ} ( \varpi_i ) )^* \cong W ( \varpi _i ) \rightarrow V ( \varpi_i ) \otimes \C [z]$ is a surjective $\g[z]$-module homomorphism. Therefore, we have
$$\sQ \hookrightarrow \prod_{i \in \mathtt I} \P ( V ( \varpi_i ) \otimes \C [z] ) \dashleftarrow \P ( \otimes _{i \in \mathtt I} W ( \varpi_i ) ),$$
that prolongs to a commutative diagram of the embedings of $\sQ$. We have
$$( \otimes _{i \in \mathtt I} W ( \varpi_i ) ) \otimes_{\C [ z_i; i \in \mathtt I ]} \C ( z_i; i \in \mathtt I ) \cong W ( \rho ) \otimes_{\C [ z_i; i \in \mathtt I ]} \C ( z_i; i \in \mathtt I )$$
by Theorem \ref{free} 5), that implies
$$\sQ \subset \P ( W ( \rho ) ) \subset \P ( W ( \rho ) \otimes_{\C [ z_i; i \in \mathtt I ]} \C ( z_i; i \in \mathtt I ) ).$$
In particular, $\cO_{\sQ} ( \rho ) = \bigotimes_{i \in \mathtt I} \cO_{\sQ} ( \varpi_i )$ is a very ample sheaf of $\sQ$. In general, $\cO_{\sQ} ( \lambda - \rho )$ has a non-zero global section by Theorem \ref{BFmain}, and we have an embedding $\cO_{\sQ} ( \rho )\hookrightarrow \cO_{\sQ} ( \lambda - \rho ) \otimes \cO_{\sQ} ( \rho ) \cong \cO_{\sQ} ( \lambda )$ of (pro-)line bundles on $\sQ$, that yields the if statement.

Only if statement is clear since the restriction of $\cO_{\sQ} ( \lambda )$ to the subspace of constant loops is $\cO_X ( \lambda )$, that is base point free if and only if $\left< \alpha ^{\vee}_i, \lambda \right> > 0$ for every $i \in \mathtt I$.
\end{proof}

\section{Ind-scheme structures on $\sQ ( w )$}

We retain the setting of the previous section.

\begin{defn}[Ind-systems]
Let $w \in W$. An increasing sequence of closed subsets
$$
\mathfrak X_1 \subset \mathfrak X_2 \subset \mathfrak X_3 \subset \cdots \subset \sQ ( w )
$$
of finite type is said to be an ind-system of $\sQ ( w )$ if $\bigcup_{k \ge 1} \mathfrak X_k = \sQ ( w )$ and for every $N \in \Z$, there exists $\beta \in Q_+^{\vee}$ so that $\mathfrak X_N \subset \sQ ( w, \beta )$, and for every $\beta \in Q_+^{\vee}$, there exists $N \in \Z$ so that $\sQ ( w, \beta ) \subset \mathfrak X_N$.
\end{defn}

\begin{lem}\label{indep}
Let $w \in W$ and $\lambda \in \Lambda$. Fix an ind-system $\{ \mathfrak X_k \}_{k \ge 1}$ of $\sQ ( w )$. For each $i \in \Z$, we have
$$\varprojlim_k \, H ^i ( \mathfrak X_k, \cO _{\mathfrak X_k} ( \lambda ) ) = H ^i ( \sQ ( w ), \cO _{\sQ ( w )} ( \lambda ) ).$$
\end{lem}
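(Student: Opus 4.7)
The plan is to reduce the statement to the formal fact that mutually cofinal inverse systems of vector spaces with compatible transition maps have canonically isomorphic limits. The right-hand side $H^i(\sQ(w), \cO_{\sQ(w)}(\lambda))$ is defined, in the manner of the formula preceding Theorem~\ref{BFmain}, as the inverse limit of $H^i(\sQ(w,\beta), \cO_{\sQ(w,\beta)}(\lambda))$ over $\beta \in Q_+^{\vee}$, and $\{\sQ(w,\beta)\}_{\beta}$ is itself an ind-system of $\sQ(w)$. Thus the lemma amounts to independence of this inverse limit under the choice of cofinal ind-system.

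First I would unpack the definition of an ind-system to extract nondecreasing assignments $k \mapsto \beta(k) \in Q_+^{\vee}$ and $\beta \mapsto N(\beta) \in \Z_{\ge 1}$ producing the interleaved chain
$$\mathfrak X_1 \subset \sQ(w, \beta(1)) \subset \mathfrak X_{N(\beta(1))} \subset \sQ(w, \beta(N(\beta(1)))) \subset \cdots \subset \sQ(w).$$
Next I would apply the contravariant functor $H^i(-, \cO(\lambda))$ to this chain of closed immersions. Each $\sQ(w, \beta)$ is projective (being closed in the product of projective spaces arising from \eqref{Pemb}), and each $\mathfrak X_k$ is a finite-type closed subscheme of some $\sQ(w, \beta(k))$, so both are projective and their cohomology groups are finite-dimensional vector spaces. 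The transition maps along the interleaved chain are compatible because $\cO_{\sQ(w)}(\lambda)$ is a compatible pro-line bundle under the embeddings $\imath^{\beta,\beta'}$, whose restriction to each $\mathfrak X_k$ equals $\cO_{\mathfrak X_k}(\lambda)$ and whose restriction to each $\sQ(w,\beta)$ equals $\cO_{\sQ(w,\beta)}(\lambda)$ by construction.

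Mutual cofinality then yields the chain of canonical isomorphisms
$$\varprojlim_k H^i(\mathfrak X_k, \cO_{\mathfrak X_k}(\lambda)) \;\cong\; \varprojlim_{\beta} H^i(\sQ(w, \beta), \cO_{\sQ(w,\beta)}(\lambda)) \;=\; H^i(\sQ(w), \cO_{\sQ(w)}(\lambda)),$$
as required. I do not anticipate a substantive obstacle: the statement is essentially a formal consequence of the two-sided cofinality condition built into the definition of an ind-system, together with the standard functoriality of coherent cohomology under closed immersions. The only point requiring a moment's care is verifying that the line bundle restrictions along the interleaved chain match up, which is immediate from the construction of $\cO_{\sQ(w)}(\lambda)$ as a pro-object.
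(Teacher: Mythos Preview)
Your proposal is correct and follows essentially the same approach as the paper: both arguments exploit the mutual cofinality built into the definition of an ind-system to interleave the two inverse systems $\{H^i(\mathfrak X_k,\cO(\lambda))\}_k$ and $\{H^i(\sQ(w,\beta),\cO(\lambda))\}_\beta$, concluding that their limits coincide. The paper phrases the interleaving slightly differently (fixing $M\gg N$ and producing $\beta_1,\beta_2$, and conversely), but the content is identical to your extraction of assignments $k\mapsto\beta(k)$ and $\beta\mapsto N(\beta)$.
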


\begin{proof}
The LHS is the limit through a projective system $H ^i ( \mathfrak X_{k+1}, \cO _{\mathfrak X_{k+1}} ( \lambda ) ) \rightarrow H ^i ( \mathfrak X_k, \cO _{\mathfrak X_k} ( \lambda ) )$ for each $k \ge 1$. By the condition of an ind-system, we find $\beta_1, \beta_2 \in Q_+^{\vee}$ for each $M \gg N \in \Z_{\ge 0}$ so that
\begin{align*}
H ^i ( \sQ ( w, \beta_2 ) , \cO _{\sQ ( w, \beta_2 )} ( \lambda ) ) & \rightarrow H ^i ( \mathfrak X_M, \cO _{\mathfrak X_M} ( \lambda ) ) \\
 & \rightarrow H ^i ( \sQ ( w, \beta_1 ) , \cO _{\sQ ( w, \beta_1 )} ( \lambda ) ) \rightarrow H ^i ( \mathfrak X_N, \cO _{\mathfrak X_N} ( \lambda ) ).
\end{align*}
We also find $M, N \in \Z_{\ge 0}$ with the same maps if we fix $\beta_2 \gg \beta_1$. Therefore, two pro-systems factor through each other, which implies
$$\varprojlim_k \, H ^i ( \mathfrak X_k, \cO _{\mathfrak X_k} ( \lambda ) ) = \varprojlim_{\beta} H ^i ( \sQ ( w, \beta ), \cO _{\sQ ( w, \beta )} ( \lambda ) )$$
as required.
\end{proof}

\begin{thm}\label{pnw}
The ind-scheme $\sQ ( w_0 ) = \sQ$ is projectively normal.
\end{thm}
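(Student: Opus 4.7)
The plan is to combine the normality of $\sQ$ (which follows by applying Theorem \ref{BFnormal} to each finite-type approximation $\sQ ( \beta )$) with surjectivity of section multiplication for the very ample bundle $\cO _{\sQ} ( \rho )$ (Corollary \ref{amplecone}). Via the identification $H^0 ( \sQ, \cO _{\sQ} ( \rho ) ) ^* \cong W ( \rho )$ from Theorem \ref{BFmain}, one obtains a closed embedding $\sQ \hookrightarrow \P ( W ( \rho ) )$; projective normality in this embedding amounts to showing, for every $n \geq 1$, the surjectivity of
$$\mathrm{Sym} ^n H^0 ( \sQ, \cO _{\sQ} ( \rho ) ) \longrightarrow H^0 ( \sQ, \cO _{\sQ} ( n \rho ) ).$$
By graded duality and Theorem \ref{BFmain}, this surjectivity is equivalent to the injectivity of the $\g [z]$-module map
$$\phi_n : W ( n \rho ) \longrightarrow \mathrm{Sym} ^n W ( \rho ), \qquad v_{n \rho} \longmapsto v_{\rho} ^{\otimes n}.$$
The map $\phi_n$ is well-defined because $v_{\rho} ^{\otimes n}$ has weight $n \rho$, is annihilated by $\gu$, satisfies $f_i ^{\left< \alpha_i ^{\vee}, n \rho \right> + 1} \cdot v_{\rho} ^{\otimes n} = 0$ by a binomial expansion, and lies in a module whose $\g$-weights are bounded above by $n \rho$, so the cyclic $\g [z]$-submodule it generates is an integrable quotient of $W ( n \rho )$.

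Since $\mathrm{Sym} ^n W ( \rho ) \hookrightarrow W ( \rho ) ^{\otimes n}$ as the diagonal $\Sym _n$-invariants in characteristic zero, it suffices to prove injectivity of the factored map $W ( n \rho ) \to W ( \rho ) ^{\otimes n}$. Here one exploits the free polynomial ring actions from Theorem \ref{free} (2): extending scalars along the finite (Galois with group $\Sym _n ^r$) extension $\C [ \A ^{( n \rho )} ] \hookrightarrow \C [ \A ^{( \rho )} ] ^{\otimes n}$, one obtains a $\C [ \A ^{( \rho )} ] ^{\otimes n}$-linear map. At any generic point $y \in \mathrm{Spec} \, \C [ \A ^{( \rho )} ] ^{\otimes n}$ whose image $x \in \A ^{( n \rho )}$ has pairwise distinct coordinates, Theorem \ref{free} (5) identifies both the specialization of the source and that of $W ( \rho ) ^{\otimes n}$ with the local Weyl module $\bigotimes _{i \in \mathtt I} \bigotimes _{k=1} ^{n} W ( \varpi_i, x_{i,k} )$, and $\phi_n$ sends one cyclic generator to the other. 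Thus the extended map is a generic isomorphism between torsion-free modules, hence injective; faithful flatness of the extension (which is free of rank $( n ! )^r$) then forces injectivity of $\phi_n$ itself.

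The principal obstacle I anticipate is showing that $\phi_n$ really is $\C [ \A ^{( n \rho )} ]$-linear for the natural actions on both sides: on $W ( n \rho )$ the action arises from $U ( \h [z] )$ on the $\h$-weight $n \rho$ subspace, while on $W ( \rho ) ^{\otimes n}$ one must first use the $\C [ \A ^{( \rho )} ] ^{\otimes n}$-action and restrict to the subring $\C [ \A ^{( n \rho )} ]$, and matching these actions requires tracking how the weight $n \rho$ part of the coproduct distributes across tensor factors. Once this alignment is in hand (exploiting uniqueness of the cyclic generator in that weight space), the chain of equivalences above delivers projective normality of $\sQ$.
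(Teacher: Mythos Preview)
Your approach is essentially the paper's: both establish projective normality by proving injectivity of the comultiplication map on global Weyl modules via generic specialization (Theorem~\ref{free}~5)) and torsion-freeness over the parameter ring. Two remarks are in order.

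First, the obstacle you flag dissolves once you note that $\phi_n$ is $\h[z]$-equivariant and that the $\C[\A^{(n\rho)}]$-action is \emph{defined} through $U(\h[z])$ on the highest-weight line: the coproduct formula $(h\otimes z^m)\cdot v_\rho^{\otimes n} = \sum_{k=1}^n v_\rho^{\otimes(k-1)}\otimes (h\otimes z^m)v_\rho \otimes v_\rho^{\otimes(n-k)}$ sends the power-sum generators of $\C[\A^{(n\rho)}]$ precisely to their images in $\C[\A^{(\rho)}]^{\otimes n}$, so $\phi_n$ is automatically $\C[\A^{(n\rho)}]$-linear. The paper uses exactly this compatibility (implicitly) when it writes ``$\C[\A^{(\lambda)}]\otimes\C[\A^{(\mu)}]$ is a free $\C[\A^{(\lambda+\mu)}]$-module of rank $\frac{(\lambda+\mu)!}{\lambda!\mu!}$.''

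Second, the paper proves the stronger statement that $W(\lambda+\mu)\hookrightarrow W(\lambda)\otimes W(\mu)$ for \emph{all} $\lambda,\mu\in\Lambda_+$, not just multiples of $\rho$; this is extracted as Corollary~\ref{wmul} and is what makes the rings $R^\#(w)$ well-defined for the rest of \S3--4. Your argument carries over to this generality verbatim. The paper also spends a paragraph arguing carefully that normality of each $\sQ(\beta)$ (Theorem~\ref{BFnormal}) transfers to normality of the completed coordinate ring $R(w_0)$, via localization at sections $\{y_i\}$ and a monic-equation/inverse-limit argument; your one-line appeal to Theorem~\ref{BFnormal} is morally right but skips this passage-to-the-limit step.
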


\begin{proof}
The homogeneous coordinate ring $R ( w_0 )$ of $\sQ ( w_0 )$ is obtained as the graded completion of its $\Gm$-finite part $R ^{\#} ( w_0 ) = \oplus_{\lambda \in \Lambda_+} W ( \lambda )^*$ (cf. \cite{BF14b} Theorem 1.5).

Let us fix a collection of non-zero elements $y = \{ y_i \}_{i \in \mathtt I}$ so that $y_i \in W ( \varpi_i )^*$ for each $i \in \mathtt I$. Consider the ring $R_y$ obtained from $R ( w_0 )$ through the localization of $y$. As we fix $y$, there exists $\beta_0 \in Q_+^{\vee}$ so that the image of $y_i$ in $H^0 ( \sQ ( \beta ), \cO_{\sQ ( \beta )} ( \varpi_i ))$ is non-zero for each $i \in \mathtt I$ when $\beta > \beta_0$ (we remind that each $\sQ ( \beta )$ is integral as being normal). Then, the image of $y$ defines an affine ring $R ( \beta )_y$ obtained by the localization of the homogeneous coordinate ring of $\sQ ( \beta )$. By the definition of the homogeneous coordinate ring, we can form the ring $R ( \beta )_y$ only using $H^0 ( \sQ ( \beta ), \cO_{\sQ ( \beta )} ( \lambda ))$ for $\lambda \gg 0$. By the Serre's vanishing theorem, such a rearrangement guarantees the projective system to be surjective, and consequently $R ( \beta )_y$ is a quotient of $R_y$. In such a circumstance, $R_y$ is integral as each $R_y ( \beta )$ is so. Now we assume $R _y$ is not normal to deduce contradiction (in order to prove that $R_y$ is normal). We have a monic equation $P ( X )$ with coefficients in $R _y$ that has a solution in $\mathrm{Frac} \, R_y$, but not in $R_y$. A solution of $P ( X ) = 0$ is written as $X = \frac{a}{b}$ by $a,b \in R_y$. For $\beta \gg 0$, all the coefficients of the equation $P ( X )$, and $a, b \in R _y$ go to non-zero elements of $R ( \beta )_y$. By Theorem \ref{BFnormal}, we find that $a/b = c ( \beta ) \in R ( \beta )_y$ for $\beta \gg 0$. Taking the inverse limit yields an element in $R _y$ that maps to $\{c ( \beta )\}_{\beta \gg 0}$. Therefore, we conclude that $R_y$ is normal. By the definition of DP-data and the embedding (\ref{Pemb}) (cf. the proof of Corollary \ref{amplecone}), the open sets $\cap _{i \in \mathtt I} \{ y_i \neq 0 \}$ cover the whole $\sQ$, and hence $\sQ$ is normal.

It remains to show that the dual of the multiplication map $W ( \lambda + \mu ) \longrightarrow W ( \lambda ) \otimes W ( \mu )$ is injective for each $\lambda, \mu \in \Lambda_+$ (here we used the fact that the normality of $\sQ$ is equivalent to that of $\P_{\sQ} ( \bigoplus _{i \in \mathtt I} \cO _{\sQ} ( \varpi_i )^{\vee} )$). Here this map extends the (dual) multiplication map $V ( \lambda + \mu ) \hookrightarrow V ( \lambda ) \otimes V ( \mu )$, that is uniquely determined up to scalar as $\g$-modules. Note also that $\C [\A^{(\lambda)}] \otimes \C [\A^{(\mu)}]$ is a free $\C [\A^{(\lambda+\mu)}]$-module of rank $\frac{(\lambda+\mu)!}{\lambda!\mu!}$. Thanks to Theorem \ref{free} 5), a generic specialization along $x \in \A^{(\lambda + \mu)}$ yields an inclusion
$$
\xymatrix{
W ( \lambda + \mu )\otimes _{\C [\A^{(\lambda+\mu)}]} \C_x \ar[r] \ar[d]_{\cong}&  ( W ( \lambda ) \otimes W ( \mu ) ) \otimes _{\C [\A^{(\lambda+\mu)}]} \C_x \ar[d]^{\cong}\\
\bigotimes _{i \in \mathtt I} \bigotimes _{j=1} ^{\left< \alpha_i ^{\vee}, \lambda + \mu \right>} W ( \varpi_i, x_{i,j} ) \ar@{^{(}->}[r] & (  \bigotimes _{i \in \mathtt I} \bigotimes _{j=1} ^{\left< \alpha_i ^{\vee}, \lambda + \mu \right>} W ( \varpi_i, x_{i,j} ) ) ^{\oplus \frac{(\lambda+\mu)!}{\lambda!\mu!}}},$$
where $\{ x_{i,j} \}$ is a set of points in $\C$ determined by the configuration of $x$ (as the map is non-zero and a non-zero $\g [z]$-module endomorphism of $W (\varpi_i,x)$ must be an isomorphism). Since any $\C [\A^{( \lambda+\mu )} ]$-submodule of a free $\C [ \A^{( \lambda+\mu )} ]$-module of finite rank has no torsion element (that is supported on some closed subset of $\A^{( \lambda+\mu )}$), we conclude that the map $W ( \lambda + \mu ) \longrightarrow W ( \lambda ) \otimes W ( \mu )$ must be an inclusion as required.
\end{proof}

\begin{defn}[Demazure modules]
For $\lambda \in \Lambda_+$ and $w \in W$, we have a unique vector $v_{w \lambda} \in V ( \lambda ) \subset W ( \lambda )$ of $\h$-weight $w \lambda$ up to scalar. We define
$$W ( \lambda )_w := U ( \gI ) v_{w \lambda} \subset W ( \lambda )$$
and call it the Demazure submodule of $W ( \lambda )$. By Theorem \ref{qwc}, we also define a $U^{\ge 0}_t$-submodule $W_t ( \lambda )$ generated by a vector with its $U_t^0$-weight $w \lambda$ at degree $0$. We note that $W ( \lambda ) = W ( \lambda ) _{w_0}$ and $W _t ( \lambda ) = W_t ( \lambda ) _{w_0}$.
\end{defn}

\begin{cor}[of the proof of Proposition \ref{pnw}]\label{wmul}
For each $\lambda, \mu \in \Lambda_+$ and $w \in W$, we have an injective map $m_{\lambda, \mu}^w : W ( \lambda + \mu )_w \longrightarrow W ( \lambda )_w \otimes W ( \mu )_w$. \hfill $\Box$
\end{cor}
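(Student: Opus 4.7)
The plan is to show that the injection $m_{\lambda,\mu}: W(\lambda+\mu)\hookrightarrow W(\lambda)\otimes W(\mu)$ constructed in the proof of Theorem \ref{pnw} carries the Demazure submodule $W(\lambda+\mu)_w$ into $W(\lambda)_w\otimes W(\mu)_w$. Once this is established, the desired map $m_{\lambda,\mu}^w$ is obtained by restriction, and its injectivity comes for free from that of $m_{\lambda,\mu}$.

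First I would pin down the behavior of $m_{\lambda,\mu}$ on extremal weight vectors. By construction, $m_{\lambda,\mu}$ is $\g[z]$-equivariant and its restriction to the top piece $V(\lambda+\mu)\subset W(\lambda+\mu)$ coincides, up to scalar, with the unique (Cartan component) $\g$-module embedding $V(\lambda+\mu)\hookrightarrow V(\lambda)\otimes V(\mu)\subset W(\lambda)\otimes W(\mu)$ normalized so that $v_{\lambda+\mu}\mapsto v_{\lambda}\otimes v_{\mu}$. Applying a lift $\dot w\in N_G(H)$ to this identity and using $G$-equivariance of the Cartan embedding, I obtain
\begin{equation*}
m_{\lambda,\mu}(v_{w(\lambda+\mu)}) \;=\; c\cdot v_{w\lambda}\otimes v_{w\mu}
\end{equation*}
for some non-zero scalar $c$, since $\dot w$ acts as a non-zero scalar on each extremal weight line.

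Next I would use that each of $W(\lambda)_w$ and $W(\mu)_w$ is, by definition, a $\gI$-submodule containing $v_{w\lambda}$ and $v_{w\mu}$ respectively. Because $\gI\subset\g[z]$ acts on the tensor product through the coproduct and both tensor factors are $\gI$-stable, we get
\begin{equation*}
U(\gI)\bigl(v_{w\lambda}\otimes v_{w\mu}\bigr)\;\subset\; W(\lambda)_w\otimes W(\mu)_w.
\end{equation*}
Combining this with the previous step and the $\gI$-equivariance of $m_{\lambda,\mu}$,
\begin{equation*}
m_{\lambda,\mu}\bigl(W(\lambda+\mu)_w\bigr) \;=\; m_{\lambda,\mu}\bigl(U(\gI)v_{w(\lambda+\mu)}\bigr) \;=\; c\cdot U(\gI)\bigl(v_{w\lambda}\otimes v_{w\mu}\bigr) \;\subset\; W(\lambda)_w\otimes W(\mu)_w,
\end{equation*}
so the restriction $m_{\lambda,\mu}^w$ lands where we want it to and is injective as a restriction of the injective map $m_{\lambda,\mu}$.

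There is no serious obstacle here; the main point that deserves care is the first step, namely the identification of the restriction of $m_{\lambda,\mu}$ to the $\g$-top of $W(\lambda+\mu)$ with the Cartan embedding, which relies on the facts (used already in the proof of Theorem \ref{pnw}) that $V(\lambda+\mu)$ occurs with multiplicity one in $V(\lambda)\otimes V(\mu)$ and that $m_{\lambda,\mu}$ was defined by extending precisely this Cartan inclusion. Once that compatibility is recorded, the argument is purely formal and the corollary follows.
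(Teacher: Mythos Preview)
Your proof is correct and is precisely the argument the paper leaves implicit: the corollary is stated as ``of the proof of Proposition \ref{pnw}'' with no further argument, and the point to be extracted from that proof is exactly that $m_{\lambda,\mu}$ is $\g[z]$-equivariant and restricts on the $\g$-top to the Cartan embedding $V(\lambda+\mu)\hookrightarrow V(\lambda)\otimes V(\mu)$, whence $v_{w(\lambda+\mu)}\mapsto c\,v_{w\lambda}\otimes v_{w\mu}$ and the $\gI$-cyclicity of the Demazure modules finishes. Your write-up matches this intended reasoning.
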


For each $w \in W$, we define a ring (that generalizes $R ( w_0 )$ in the proof of Proposition \ref{pnw})
$$R ^{\#} ( w ) :=  \bigoplus _{\lambda \in \Lambda_+} W ( \lambda ) _w ^*,$$
where the product structure is given by Corollary \ref{wmul}. Let $R ( w )$ denote the $\Gm$-graded completion of $R ( w )$, taken $\Lambda_+$-degreewise.

\begin{cor}\label{pnwc}
The ring $R  ( w_0 )$ is normal. \hfill $\Box$
\end{cor}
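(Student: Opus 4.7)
The plan is to extract the normality of $R(w_0)$ directly from the arguments already embedded in the proof of Proposition \ref{pnw}. That proof establishes two facts I will reuse: (i) for every collection of nonzero sections $y = \{y_i\}_{i \in \mathtt I}$ with $y_i \in W(\varpi_i)^*$, the localization $R_y$ of $R(w_0)$ at $y$ is a normal integral domain; (ii) each multiplication map $W(\lambda + \mu) \hookrightarrow W(\lambda) \otimes W(\mu)$ is injective, so $R^{\#}(w_0)$, and hence its graded completion $R(w_0)$, is an integral domain.

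Given these, the key point is the identification
\[
R(w_0) \; = \; \bigcap_{y} R_y \;\subset\; \mathrm{Frac}\, R(w_0),
\]
where $y$ ranges over all such choices. This holds because the corresponding distinguished opens $\bigcap_{i \in \mathtt I}\{y_i \neq 0\}$ form a cover of $\sQ$ (noted in the proof of Proposition \ref{pnw}), so a rational element lying in every $R_y$ restricts to an honest section on each finite-type truncation $\sQ(\beta)$, where it lies in $R(\beta)$ by Theorem \ref{BFnormal} together with the projective normality already obtained on each $\sQ(\beta)$. Taking the graded inverse limit over $\beta$ produces an element of $R(w_0)$, yielding the claimed intersection formula.

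Normality of $R(w_0)$ then follows by the standard argument: if $f \in \mathrm{Frac}\, R(w_0)$ is integral over $R(w_0)$, then $f$ is integral over each $R_y$, hence lies in each $R_y$ by (i), and hence lies in $R(w_0)$ by the intersection formula. I expect the main technical point to be a careful justification of the degreewise passage to the inverse limit; this should be handled by noting that each fixed $\Lambda_+$-graded piece of $R(w_0)$ is finite-dimensional and coincides with the corresponding piece of $R(\beta)$ for $\beta$ sufficiently large (via the Serre-vanishing argument already used in the proof of Proposition \ref{pnw}), so the normality obtained on the truncations transfers to the completion without loss.
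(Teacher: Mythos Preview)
The paper gives no argument here: Corollary~\ref{pnwc} is immediate from Theorem~\ref{pnw}, since ``$\sQ(w_0)$ projectively normal'' in this paper is used to mean that its homogeneous coordinate ring is normal, and the first line of the proof of Theorem~\ref{pnw} identifies that ring with $R(w_0)$. Your route via the intersection formula $R(w_0)=\bigcap_y R_y$ is a legitimate unpacking built from the same ingredients (normality of each $R_y$, and the fact that the opens $\bigcap_{i\in\mathtt I}\{y_i\neq 0\}$ cover $\sQ$); an intersection of normal overrings sharing a common fraction field is normal, so the conclusion follows.

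One step in your justification of the intersection formula is unsupported: you invoke ``the projective normality already obtained on each $\sQ(\beta)$,'' but the paper establishes only \emph{normality} of $\sQ(\beta)$ (Theorem~\ref{BFnormal}), never projective normality of the individual truncations. Fortunately this invocation is also unnecessary. A homogeneous element of $\bigcap_y R_y$ of degree $\lambda\in\Lambda_+$ is, by the sheaf axiom alone, a class in $H^0(\sQ,\cO_{\sQ}(\lambda))$, and this equals $R(w_0)_\lambda$ by Theorem~\ref{BFmain}; no appeal to the finite truncations is required. (Relatedly, your claim that each $\Lambda_+$-graded piece of $R(w_0)$ is finite-dimensional and agrees with $R(\beta)_\lambda$ for large $\beta$ is not correct as stated; only the $(\Lambda_+\times\Z)$-bigraded pieces stabilize.) With the truncation detour removed, your argument is sound.
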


\begin{cor}[of the proof of Proposition \ref{pnw}]\label{homog2}
The ind-scheme $\sQ ( w )$ is projectively normal if the ring $R ( w )$ is normal and $R ^{\#} ( w )$ defines a dense subring of the projective coordinate ring of $\sQ ( w )$.
\end{cor}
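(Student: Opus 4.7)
The strategy parallels the proof of Theorem \ref{pnw}, with hypotheses (A) normality of $R(w)$ and (B) density of $R^{\#}(w)$ playing the roles that Theorem \ref{BFnormal} and Serre vanishing played in the $w = w_0$ case.

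First I establish that $\sQ(w)$ is normal. By hypothesis (B), the graded completion of the section ring $\bigoplus_{\lambda \in \Lambda_+} H^0(\sQ(w), \mathcal{O}_{\sQ(w)}(\lambda))$ coincides with $R(w)$, which is normal by (A). Emulating the localization step of Theorem \ref{pnw}: pick nonzero $y_i \in W(\varpi_i)_w^*$ for each $i \in \mathtt I$ and form the localization $R(w)_y$, which is normal as a localization of a normal ring. Its degree-zero part $(R(w)_y)_0$ is normal as a $\Gm$-invariant retract of $R(w)_y$, and serves as the coordinate ring of the affine open $\bigcap_i \{y_i \neq 0\} \subset \sQ(w)$. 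By varying $y$ within $\prod_i W(\varpi_i)_w^*$, these opens cover $\sQ(w)$: at each point $p \in \sQ(w)$ and each $i$, some element of $H^0(\sQ(w), \mathcal{O}_{\sQ(w)}(\varpi_i))$ is nonvanishing by the embedding (\ref{Pemb}) restricted to $\sQ(w)$, and hypothesis (B) provides a dense approximation inside $W(\varpi_i)_w^*$. Hence $\sQ(w)$ is normal.

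Second, projective normality reduces to surjectivity of multiplication in the section ring. Corollary \ref{wmul} supplies injections $m_{\lambda,\mu}^w : W(\lambda+\mu)_w \hookrightarrow W(\lambda)_w \otimes W(\mu)_w$, whose duals are surjective multiplications $W(\lambda)_w^* \widehat{\otimes} W(\mu)_w^* \twoheadrightarrow W(\lambda+\mu)_w^*$ after graded completion. Combined with hypothesis (B), this transports to surjectivity of the multiplication in the completed section ring of $\sQ(w)$, giving projective normality.

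The main obstacle lies in the covering step of part one. One must ensure that the $y_i$ can be chosen \emph{inside} $W(\varpi_i)_w^*$ and still have $\bigcup_y \bigcap_i \{y_i \neq 0\}$ exhaust $\sQ(w)$ — that is, the nonvanishing sections already exist in the dense subring $R^{\#}(w)$, not merely in $S^{\#}(w)$. The interplay of hypothesis (B) with the DP-data description underlying (\ref{Pemb}) is what makes this work; without (B) one would only obtain normality of the generic orbit from Lemma \ref{Zdense} rather than of $\sQ(w)$ itself.
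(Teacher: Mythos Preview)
Your opening sentence of part one — ``By hypothesis (B), the graded completion of the section ring $\bigoplus_{\lambda \in \Lambda_+} H^0(\sQ(w), \mathcal{O}_{\sQ(w)}(\lambda))$ coincides with $R(w)$, which is normal by (A)'' — is the entire argument, and it matches the paper's proof. The paper adds only one ingredient you leave implicit: the ind-system on $\sQ(w)$ given by $\{\sQ(w,\beta)\}_\beta$ is equivalent to the one obtained by cutting off $\Gm$-degrees (via the embedding (\ref{Pemb})), so that ``density'' of $R^{\#}(w)$ in the projective coordinate ring really does force their graded completions to agree.

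Everything after that sentence is unnecessary. The hypotheses (A) and (B) are crafted precisely so that the localization argument and the multiplication-surjectivity argument from the proof of Theorem~\ref{pnw} become superfluous: once the projective coordinate ring \emph{is} $R(w)$ and $R(w)$ is normal, projective normality holds by definition. Your localization/covering argument for normality of $\sQ(w)$ and your appeal to Corollary~\ref{wmul} for multiplication surjectivity are both already encoded in the ring $R(w)$ itself (its structure was defined through Corollary~\ref{wmul}). The ``obstacle'' you flag — whether the $y_i$ can be chosen in $W(\varpi_i)_w^*$ and still cover — is a non-issue in the paper's approach, which never needs a covering argument at all.

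So: correct, same approach, but you have buried a one-line proof under a reproduction of the $w_0$ case that the hypotheses were designed to short-circuit.
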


\begin{proof}
Our ind-system (used in the definition of $\sQ ( w )$ through $\sQ$) is equivalent to these obtained by cutting out by the degrees by its definition (cf. (\ref{Pemb})). Therefore, if $R^{\#} ( w )$ is a dense subring of the projective coordinate ring of $\sQ ( w )$, then the latter is $R( w )$.
\end{proof}

\section{Main Results}

We continue to work in the setting of the previous section.

\begin{defn}[Demazure operator]\label{do}
For each $i \in \mathtt I_{\mathrm{aff}}$, we define a linear operator on $\C (\!(q)\!) [\Lambda]$ by
$$D_i ( q^m e ^{\lambda} ) := q^m \frac{e^{\lambda} - e^{s_i \lambda - \alpha_i}}{1 - e^{- \alpha_i}} \hskip 3mm \text{for each} \hskip 3mm m \in \Z \text{ and } \lambda \in \Lambda,$$
where we formally put $q = e ^{\delta}$. For $w \in W_{\mathrm{aff}}$, we fix a reduced expression $s_{i_1} s_{i_2} \cdots s_{i_{\ell}}$ of $w$ and set
$$D_w := D_{i_1} \circ D_{i_2} \circ \cdots \circ D_{i_{\ell}}.$$
\end{defn}

\begin{thm}[Demazure-Joseph, cf. Kumar \cite{Kum02} \S V\!I\!I\!I]\label{DCF}
We have:
\begin{enumerate}
\item For each $w \in W_{\mathrm{aff}}$, the Demazure operator $D_w$ is independent of the choice of a reduced expression;
\item For each $\lambda \in \Lambda$ and $w \in W$, we have
$$\sum_{i \ge 0} (-1)^i \mathrm{ch} \, H ^i ( X ( w ), \cO _{X ( w )} ( \lambda ) )^* = D_w ( e^{\lambda} );$$
\item For each $\lambda \in \Lambda_+$ and $w \in W$, we have $H^0 ( X ( w ), \cO _{X ( w )} ( \lambda ) )^* \cong U ( \gb ) v_{w \lambda}$ as $B$-modules and $H ^i ( X ( w ), \cO _{X ( w )} ( \lambda ) ) = \{ 0 \}$ for $i > 0$;
\item For each $w \in W$, the restriction through $X ( w ) \subset X$ induces a $B$-module inclusion $H^0 ( X ( w ), \cO _{X ( w )} ( \lambda ) )^* \subset V ( \lambda )$.\hfill $\Box$ 
\end{enumerate}
\end{thm}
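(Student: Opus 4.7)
The plan is to prove (2)--(4) simultaneously by induction on $\ell(w)$ and then derive (1) either a posteriori or by a direct combinatorial verification. The geometric backbone is that for each $i \in \mathtt I$ with $s_i w > w$, the projection $X_G \to G/P_i$ (with $P_i \supset B$ the minimal parabolic of type $i$) restricts to a Zariski-locally trivial $\bP^1$-bundle
\[
\pi_i : X(s_i w) \longrightarrow \pi_i(X(s_i w)) \cong X(w),
\]
so cohomological and representation-theoretic questions on $X(s_i w)$ reduce to those on $X(w)$ together with fiberwise Borel--Weil on $\bP^1$.

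For the inductive step of (2), (3), (4), I would apply the Leray spectral sequence to $\pi_i$. For $\lambda \in \Lambda_+$ the pushforward $\pi_{i*}\cO_{X(s_i w)}(\lambda)$ is locally free of rank $\left<\alpha_i^{\vee}, \lambda\right> + 1$ on $X(w)$, while $R^j\pi_{i*} = 0$ for $j \ge 1$ by fiberwise vanishing on $\bP^1$; this proves (3) inductively. Taking sections and dualising yields a short exact sequence whose character equals $D_i$ applied to $\mathrm{ch}\, H^0(X(w), \cO_{X(w)}(\lambda))^*$, giving (2) on dominant weights. For general $\lambda$ one repeats the same argument with negative twists and a Koszul-type long exact sequence, whose alternating Euler characteristic matches $D_i$ identically. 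For (4), the compatibility of the restrictions $V(\lambda) \twoheadrightarrow H^0(X(s_i w), \cO(\lambda))^* \twoheadrightarrow H^0(X(w), \cO(\lambda))^*$ with the $\pi_i$-pushforward, combined with the inductive identification as $U(\gb) v_{w\lambda}$, delivers both the inclusion into $V(\lambda)$ and the realization as $U(\gb) v_{s_i w\lambda}$.

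Part (1) is then a purely combinatorial statement valid for all $w \in W_{\mathrm{aff}}$: one verifies directly that each $D_i$ is idempotent ($D_i^2 = D_i$) and that $\{D_i\}_{i \in \mathtt I_{\mathrm{aff}}}$ satisfies the braid relations of $W_{\mathrm{aff}}$ as linear operators on $\C(\!(q)\!)[\Lambda]$; this is a short finite check on the basis $\{q^m e^\lambda\}$. In the finite part, (1) would also follow a posteriori from (2) since the cohomology characters are intrinsic.

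The main obstacle is justifying the $\bP^1$-bundle picture and the required vanishings when $X(w)$ is singular. The standard workaround is to first run the entire argument on the Bott--Samelson resolution associated to a reduced expression of $w$ — where everything is smooth and the tower of $\bP^1$-bundles is manifest — and then descend to $X(w)$ using the fact that, in characteristic $0$, Schubert varieties have rational singularities, so the pushforward of the structure sheaf from the Bott--Samelson resolution is $\cO_{X(w)}$ with vanishing higher direct images. This is precisely the route taken in Kumar's monograph, and it is the technical heart of the argument.
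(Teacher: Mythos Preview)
The paper does not prove this theorem: note the $\Box$ placed immediately after the statement, with no intervening text. Theorem~\ref{DCF} is quoted as a classical result and attributed to Kumar's monograph (Chapter~V\!I\!I\!I), so there is nothing to compare your argument against in the paper itself.

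That said, your sketch is essentially the standard route taken in the cited reference: the Bott--Samelson tower of $\bP^1$-bundles, fiberwise Borel--Weil, the Leray spectral sequence, and descent to $X(w)$ via rational singularities. One small inaccuracy: the map $\pi_i:X(s_iw)\to G/P_i$ has image a Schubert variety in the partial flag variety, and your identification $\pi_i(X(s_iw))\cong X(w)$ is really the statement that $X(w)\to\pi_i(X(w))$ is an isomorphism when $s_iw>w$ (true because $w$ is then the minimal coset representative); this deserves a word. Also, your claim in part~(4) that restriction gives surjections $V(\lambda)\twoheadrightarrow H^0(X(w),\cO(\lambda))^*$ has the arrows reversed --- restriction on sections is surjective, so on duals one gets \emph{injections} $H^0(X(w),\cO(\lambda))^*\hookrightarrow V(\lambda)$, which is exactly what~(4) asserts. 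With those cosmetic fixes, the outline is correct and matches the literature the paper is citing.
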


\begin{lem}\label{rest}
For each $\lambda \in \Lambda_+$ and $w \in W$, the space $\Gamma ( \sQ ( w ), \cO _{\sQ (w)} ( \lambda) ) ^*$ contains a non-zero vector of weight $w \lambda$ arising from $\Gamma ( X ( w ), \cO _{X (w)} ( \lambda ) )^*$.
\end{lem}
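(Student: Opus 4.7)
The plan is to exhibit the required vector by transposing the $B$-equivariant restriction-of-sections map along the constant-loop embedding $\iota: X(w) \hookrightarrow \sQ(w)$. Via the ind-structure maps $\imath^{0,\beta}$, the locus $X = \sQ(X, 0)$ sits as a closed subscheme of $\sQ$; since $\mathtt{ev}_0$ restricted to constant loops is the identity on $X$, the intersection $X \cap \sQ(w)$ equals $X(w)$, producing $\iota$. The composition of $\iota$ with (\ref{Pemb}) factors through the classical Pl\"ucker embedding $X \hookrightarrow \prod_{i \in \mathtt I} \P(V(\varpi_i))$ by way of the constant-polynomial inclusion $V(\varpi_i) \hookrightarrow V(\varpi_i) \otimes \C[z]_{\le \langle \beta, \varpi_i \rangle}$, and consequently $\iota^{*}\cO_{\sQ(w)}(\lambda) = \cO_{X(w)}(\lambda)$ as $B$-equivariant line bundles.

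Transposing the $B$-equivariant restriction map on global sections then yields
$$
r^{\vee}: \Gamma(X(w), \cO_{X(w)}(\lambda))^{*} \longrightarrow \Gamma(\sQ(w), \cO_{\sQ(w)}(\lambda))^{*}.
$$
By Theorem \ref{DCF} 3), the source equals $U(\gb) v_{w\lambda}$, whose weight-$w\lambda$ subspace is one-dimensional and spanned by $v_{w\lambda}$; it thus suffices to verify $r^{\vee}(v_{w\lambda}) \neq 0$. Under the canonical isomorphism $\cO_{\sQ(w)}(\lambda)|_{x_w} \cong \C_{w\lambda}$, the functional $v_{w\lambda}$ is identified with evaluation of sections at the $H$-fixed point $x_w \in X(w) \subset \sQ(w)$, so the problem reduces to producing a section of $\cO_{\sQ(w)}(\lambda)$ that does not vanish at $x_w$.

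Such a section is extracted from (\ref{Pemb}). By multiplicativity in $\lambda$, it suffices to treat $\lambda = \varpi_i$; under (\ref{Pemb}), the constant loop at $x_w$ (with added defect $\beta \otimes (\infty)$ coming from $\imath^{0,\beta}$) maps to the line $[v_{w\varpi_i} \otimes 1] \in \P(V(\varpi_i) \otimes \C[z]_{\le \langle \beta, \varpi_i \rangle})$, and accordingly the linear functional dual to $v_{w\varpi_i} \otimes 1$ defines a section of $\cO_{\sQ(w, \beta)}(\varpi_i)$ whose value at $x_w$ is non-zero. The main technical step of this plan is the $B$-equivariant identification $\iota^{*}\cO_{\sQ(w)}(\lambda) = \cO_{X(w)}(\lambda)$, which requires unwinding the interplay between $\imath^{0,\beta}$ and (\ref{Pemb}); however, since the relevant fiber at $x_w$ is controlled by the behavior at $0 \in \P^1$ where no defect is present, this identification goes through cleanly.
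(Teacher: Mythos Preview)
Your proof is correct and follows essentially the same line as the paper's: both exploit the constant-loop embedding $X(w)\hookrightarrow\sQ(w)$ (defects supported at $\infty$), identify the restricted line bundle, and verify that $v_{w\lambda}$---interpreted as evaluation at the fixed point $x_w$---survives by exhibiting an explicit section nonvanishing at $x_w$ via the Pl\"ucker embedding (\ref{Pemb}). The only cosmetic difference is that the paper routes this section through the identification $\Gamma(\sQ,\cO_{\sQ}(\lambda))^*\cong W(\lambda)$ from Theorem~\ref{BFmain} and pins it down as the dual of the unique weight-$w\lambda$ degree-$0$ vector there, whereas you construct the section directly from the Pl\"ucker coordinate dual to $v_{w\varpi_i}\otimes 1$; your route is marginally more self-contained, but the geometric content is identical.
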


\begin{proof}
We have $0 \neq v_{w\lambda} \in \Gamma ( X ( w ), \cO _{X (w)} ( \lambda ) )^* \subset \Gamma ( X, \cO _X ( \lambda ))^*$ by Theorem \ref{DCF} 3) and 4). We have an inclusion $X ( w ) \subset \sQ ( w )$ of constant quasimaps with their defects supported at $\infty$, that presents a section of $\mathtt{ev}_0$. The degree $0$-part of the map $\sQ \rightarrow \P \Gamma ( \sQ, \cO_{\sQ} ( \lambda) ) ^*$ represents the image of the evaluation map $\sQ_0 \rightarrow X$. In particular, we have $[v_{w \lambda}] \in X ( w ) \subset \sQ ( w )$. Being a unique vector of weight $w\lambda$ at degree $0$ in $W ( \lambda )$, the dual vector $v^*$ of $v_{w \lambda}$ in $\Gamma ( \sQ, \cO _{\sQ} ( \lambda) )$ is uniquely determined up to scalar. Since $v^*$ defines a non-zero regular function on $\sQ ( w )$, it survives through the restriction to $\Gamma ( \sQ (w), \cO _{\sQ ( w )} ( \lambda) )$. Hence, we deduce $v_{w \lambda} \in ( \mathrm{Im} \, \Gamma ( \sQ, \cO _{\sQ} ( \lambda) ) \to \Gamma ( \sQ ( w ), \cO _{\sQ (w)} ( \lambda) ) )^*$. Therefore, $v_{w \lambda}$ must prolong to $\Gamma ( \sQ (w), \cO _{\sQ ( w )} ( \lambda) ) ^*$ as required.
\end{proof}

\begin{lem}\label{d-est}
Let $V$ be a graded $\g$-module with finitely many distinct $\h$-weights. Let $E \subset V$ be its $\gb$-submodule. For each $i \in \mathtt I$, we have
$$U ( \gp_i ) E \subset H ^0 ( \P^1, P_i \times^{B} E )^* \hskip 2mm \text{and} \hskip 2mm \mathrm{ch} \, U ( \gp_i ) E \le D_i ( \mathrm{ch} \, E ),$$
where the latter equality holds if and only if $V$ has a finite $\gp_i$-filtration
$$\{ 0 \} \subset V (N) \subset V (N-1) \subset V ( N-2 ) \subset \cdots \subset V ( 0 ) = V,$$
so that the induced associated graded 
$$\bigoplus_{k = 0}^{N-1} ( E \cap V ( k ) ) / ( E \cap V ( k + 1 ) )$$
is a direct sum of irreducible $L_i ^0$-modules and one-dimensional $\gb$-modules $\C_\gamma$ so that $\left< \alpha_i^{\vee}, \gamma \right> > 0$. The analogous assertion also holds for a $U_t^{\flat}$-module $V$ and its $(U_t^{\flat} \cap U_t^{\ge 0})$-submodule $E$.
\end{lem}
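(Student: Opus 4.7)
The plan is to compare $U(\mathfrak{sl}_2)E$ with the global sections of the $\mathop{SL}(2)$-equivariant sheaf $\mathcal E := \mathop{SL}(2)\times^B E$ on $\P^1 \cong \mathop{SL}(2)/B$, and to use the Borel--Weil--Bott theorem together with a $\gb$-weight filtration of $E$ to get the character estimate.

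For the inclusion $U(\mathfrak{sl}_2)E \subset H^0(\P^1,\mathcal E)^*$, I would realize the right-hand side as a canonical $\mathfrak{sl}_2$-module into which $E$ maps $\gb$-equivariantly (as the dual of the evaluation of sections at the base point $eB$); the universal property of $U(\mathfrak{sl}_2) \otimes_{U(\gb)} -$ then promotes this to an $\mathfrak{sl}_2$-equivariant map from the Verma-type module $U(\mathfrak{sl}_2)\otimes_{U(\gb)} E$, which factors through its quotient $U(\mathfrak{sl}_2)E \subset V$ thanks to the compatible ambient trivialization $\mathop{SL}(2)\times^B V \cong V\otimes\cO_{\P^1}$, $[g,v]\mapsto(gB,gv)$, valid because $V$ is already an $\mathfrak{sl}_2$-module. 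Injectivity of the resulting map is then a routine check on weight spaces.

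For the character bound, I would take a $\gb$-stable filtration $0 = E_0 \subset E_1 \subset \cdots \subset E_n = E$ with one-dimensional graded quotients $\C_{\mu_k}$, so that the long exact sequences of cohomology on $\P^1$ reduce the Euler characteristic $\chi(\mathcal E)$ to the line-bundle case $\mathop{SL}(2)\times^B \C_{\mu} = \cO_{\P^1}(\langle\alpha_i^\vee,\mu\rangle)$. A direct check against the definition of $D_i$ gives $\chi(\cO_{\P^1}(k))\cdot e^\mu = D_i(e^\mu)$ for each piece, so $\chi(\mathcal E) = D_i(\mathrm{ch}\,E)$. Since $H^1$ contributes with opposite sign, $\mathrm{ch}\,H^0 \geq D_i(\mathrm{ch}\,E)$, and combining with the inclusion of Step 1 yields $\mathrm{ch}\,U(\gp_i)E \leq D_i(\mathrm{ch}\,E)$.

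Equality is equivalent to the simultaneous vanishing $H^1(\P^1,\mathcal E) = 0$ and the first-step inclusion being character-tight. The hypothesized $\gp_i$-filtration on $V$ encodes exactly this: $\mathfrak{sl}_2$-irreducible graded pieces on $E$ yield trivial bundles on $\P^1$ (so $H^1 = 0$ and the sections recover $E$), while one-dimensional weight pieces $\C_\gamma$ with $\langle\alpha_i^\vee,\gamma\rangle > 0$ yield $\cO_{\P^1}(k)$ for $k>0$ (again $H^1=0$ by Serre vanishing, with the Demazure count matching on $H^0$), and no other graded piece can appear without either producing $H^1$ or violating the character equality. The $U_t^\flat$-analogue proceeds identically via the quantum Borel--Weil for $U_t(\mathfrak{sl}_2)$, or by a $t=1$ specialization argument once a flat $\C[t]$-lattice is set up. The main obstacle is the bookkeeping of duality conventions in Step 1, where care is needed so that the inclusion lands on $U(\mathfrak{sl}_2)E$ itself and not on its dual.
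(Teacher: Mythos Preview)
Your approach to the inclusion $U(\mathfrak{sl}_2)E \subset H^0(\P^1,\mathcal E)^*$ via the ambient trivialization $\mathop{SL}(2)\times^B V \cong V\otimes\cO_{\P^1}$ is fine and close to the paper's one-line justification. The derivation of the character inequality, however, has a genuine logical gap. From Step~1 you get $\mathrm{ch}\,U(\gp_i)E \le \mathrm{ch}\,H^0(\P^1,\mathcal E)^*$, and from the Euler characteristic you get $\mathrm{ch}\,H^0(\P^1,\mathcal E)^* = D_i(\mathrm{ch}\,E) + \mathrm{ch}\,H^1(\P^1,\mathcal E)^* \ge D_i(\mathrm{ch}\,E)$. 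These two inequalities point in opposite directions and cannot be chained: from $A\le B$ and $B\ge C$ one cannot conclude $A\le C$. Indeed, whenever $H^1\neq 0$ the middle term $\mathrm{ch}\,H^0^*$ strictly exceeds $D_i(\mathrm{ch}\,E)$, so the inclusion into $H^0^*$ gives you no control over $D_i(\mathrm{ch}\,E)$. The paper does not route the inequality through $H^0$ at all; it verifies $\mathrm{ch}\,U(\mathfrak{sl}_2)E \le D_i(\mathrm{ch}\,E)$ directly when $V$ is an irreducible $\mathfrak{sl}_2$-module (where $E$ is a string of top weight spaces and the computation is elementary) and then passes to general $V$ by additivity with respect to short exact sequences.

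Your treatment of the ``only if'' direction of the equality characterization is also too compressed. Saying that ``no other graded piece can appear without either producing $H^1$ or violating the character equality'' hides the real work: one must actually exhibit a $\gp_i$-filtration on $V$ whose induced filtration on $E$ has the stated associated graded. The paper does this by taking the $\mathfrak{sl}_2$-isotypic filtration $V(k)=\bigoplus_{j\ge k}V[j]$, setting $E(k)=E\cap V(k)$, assuming there is a largest $N'$ for which $E(N')/E(N'+1)$ fails to be of the required shape, and then showing via a comparison of $U(\mathfrak{sl}_2)E(N')$ with $U(\mathfrak{sl}_2)(E/E(N'))$ that this forces a \emph{strict} inequality $\mathrm{ch}\,U(\mathfrak{sl}_2)E < D_i(\mathrm{ch}\,E)$. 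This contradiction argument, which occupies most of the paper's proof, is not accounted for in your sketch.
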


\begin{proof}
During the proof of Lemma \ref{d-est}, we set $\mathop{SL} ( 2 ) := L_i ^0 \subset P _i$ and $\mathfrak{sl}_2 := \mathrm{Lie} \, L_i ^0$. Since $E$ is assumed to be $\gb$-stable, we have $U ( \gp_i ) E = U ( \mathfrak{sl}_2 ) E$. Hence, we replace $\gp_i$ and $P_i$ with $\mathfrak{sl}_2$ and $\mathop{SL} ( 2 )$ during this proof. We also use $B$ to represent $B^0_i$ $(= L_i^0 \cap B)$ for bravity. Moreover, we identify $\Lambda_+$ with $\Z_{\ge 0}\varpi$, where $\varpi$ is the fundamental weight of $\mathfrak{sl} ( 2 )$ so that $\cO _{\mathbf I_i/\mathbf I} ( \varpi_i ) = \cO _{\mathop{SL} ( 2 )/B} ( \varpi ) \cong \cO_{\P^1} ( 1 )$.

We have a natural inclusion $U ( \mathfrak{sl}_2 ) E \subset H ^0 ( \P^1, \mathop{SL} ( 2 ) \times^B E )^*$ coming from the  restriction to the $\mathfrak{sl}_2$-highest weight part of $E$ regarded as a fiber at $B/B$. The inequality is easy to verify when $V$ is irreducible, and we deduce the inequality part of the assertion by the Euler-Poincar\'e principle in general (as the LHS is subadditive and the RHS is additive with respect to a short exact sequence). 

In case $E$ admits such a filtration, each graded piece define subquotients of $\mathop{SL} ( 2 ) \times^B E \subset \mathop{SL} ( 2 ) \times ^B V$ whose direct summands are of the form $V ( \lambda ) \otimes \cO_{\P^1} = V ( \lambda ) \otimes \cO _{\P^1}$ or $V ( \lambda ) \otimes \cO _{\P^1} \rightarrow \!\!\!\!\! \rightarrow \cO _{\P^1} ( \lambda )$ for some $\lambda \in \Z_{\ge 0} \varpi = \Lambda_+$ (the former case corresponds to irreducible $L_i ^0$-modules and the latter case corresponds to one-dimensional $\gb$-modules). In all cases, we have $H ^1 ( \P^1, \bullet ) = \{ 0 \}$, and a successive applications of short exact sequences yields if part of the assertion on $\gp_i$-filtrations.

We prove the only if part of the assertion on $\gp_i$-filtrations. For each $k \ge 0$, we define $V [k]$ to be the $\mathfrak{sl}_2$-direct summand of $V$ whose highest weight is $k \varpi$ (via the restriction from $\gp_i$). Consider the filtration
$$\{ 0 \} \subset V (N) \subset V (N-1) \subset V ( N-2 ) \subset \cdots \subset V ( 0 ) = V,$$
where $V ( k ) = V ( k+1 ) \oplus V [k]$ for each $k \ge 0$ (and we have $V ( k ) = \{ 0 \}$ for $k \gg 0$). Note that each $V [k]$ and $V ( k )$ inherit the grading and the $\h$-module structure from $V$. We define $E ( k ) := E \cap V ( k )$ for each $k \ge 0$. Each $E ( k )$ is stable by the $\gb$-action. We assume $N'$ to be the largest number so that $E ( N' ) / E ( N'+1 )$ is not a direct sum of $\mathfrak{sl}_2$-modules and one-dimensional $\gb$-modules of weight $\Z_{>0} \varpi$ to deduce contradiction. We have 
$$\mathrm{ch} \, H ^0 ( \P^1, \mathop{SL} ( 2 ) \times^B E (N') ) ^* - \mathrm{ch} \, U ( \mathfrak{sl}_2 ) E ( N' ) > \mathrm{ch} \, H ^1 ( \P^1, \mathop{SL} ( 2 ) \times^B E (N') ) ^*$$
from the $\gb$-invariance of the $E(N')$ and the hypothesis (with the help of Euler-Poincar\'e principle). This is the same as an inequality
\begin{equation}
\mathrm{ch} \, U ( \mathfrak{sl}_2 ) E ( N' ) < D_i ( \mathrm{ch} \, E (N') ). \label{ineqA}
\end{equation}

For each $v \in E \backslash E ( N' )$ so that $v' \in U ( \gb ) v$ has $\mathfrak{sl}_2$-weight $k \varpi$ for $k \ge N'$, we have $v' \in E ( N' )$ by a weight counting. In particular, we have
$$\left( v + U ( \mathfrak{sl}_2 ) E ( N' ) \right) \cap \bigoplus_{k < N'} V [ k ] \neq \emptyset.$$
This forces
$$\left( U ( \mathfrak{sl}_2) E \right) / \left( U ( \mathfrak{sl}_2) E (N') \right) \cong U ( \mathfrak{sl}_2) \left( E / E ( N' ) \right) \subset \bigoplus_{k < N'} V [ k ].$$
In particular, we have an inequality
\begin{equation}
\mathrm{ch} \, \left( U ( \mathfrak{sl}_2) E \right) / \left( U ( \mathfrak{sl}_2) E (N') \right) = \mathrm{ch} \, U ( \mathfrak{sl}_2) \left( E / E ( N' ) \right) \le D_i ( \mathrm{ch} \, E / E ( N' ) )\label{ineqB}
\end{equation}
The inequalities (\ref{ineqA}) and (\ref{ineqB}) result in
$$\mathrm{ch} \, \left( U ( \mathfrak{sl}_2) E \right) < D_i ( \mathrm{ch} \, E ( N' ) ) + D_i ( \mathrm{ch} \, E / E ( N' ) ) = D_i ( \mathrm{ch} \, E ).$$
Therefore, we have no possible choice of $N'$. Hence the only if part of the assertion on $\gp_i$-filtrations follows.

Since the integrable representation theory of $U _t ( \mathfrak{sl}_2 )$ (with $t$ being generic) and $U ( \mathfrak{sl}_2 )$ are the same, exactly the same proof works in the quantum setting as required.
\end{proof}

\begin{defn}
For $w \in W$ and $i \in \mathtt I_{\mathrm{aff}}$, we define $\overline{s_i w} > _q w$ if we have $s_i w > w$ (when $i \in \mathtt I$) or $w^{-1} \vartheta \not\in \Delta^+$ (when $i = 0$).
\end{defn}

\begin{thm}[LNSSS-I \cite{LNSSS1} \S 6]\label{ind-w}
For every $w, v \in W$, there exists a sequence $i_1,i_2,\ldots,i_{\ell} \in \mathtt I_{\mathrm{aff}}$ so that
\begin{equation}
w = \overline{s_{i_1} s_{i_2} \cdots s_{i_{\ell}} v} >_q \overline{s_{i_2} \cdots s_{i_{\ell}} v} >_q \cdots >_q \overline{s_{i_{\ell}} v} >_q v.\label{q-order}
\end{equation}
\end{thm}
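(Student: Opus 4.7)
The plan is to establish the $>_q$-chain from $v$ to $w$ by a reduction-and-interleaving argument. By transitivity of chain concatenation it suffices to prove two sub-claims: (a) every $v \in W$ admits a $>_q$-chain up to $w_0$, and (b) from $w_0$ one can reach every $w \in W$ by a $>_q$-chain. Concatenating the two chains at $w_0$ yields the chain required by the theorem. Sub-claim (a) is immediate: as long as $v \neq w_0$, some $i \in \mathtt I$ satisfies $s_i v > v$ in Bruhat order, and this gives $\overline{s_i v} >_q v$ by the $\mathtt I$-clause of the definition of $>_q$; iterating produces a $>_q$-chain up to $w_0$.

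Sub-claim (b) is the main work. I would argue by downward induction on $\ell(w)$, with base case $w = w_0$ trivial. For the inductive step the crucial tool is the $i = 0$ move: since $w_0^{-1}\vartheta = w_0(\vartheta) \in \Delta^-$, the $i = 0$ clause applies at $w_0$ and yields $s_\vartheta w_0 >_q w_0$ with $\ell(s_\vartheta w_0) = \ell(w_0) - \ell(s_\vartheta) < \ell(w_0)$. The overall strategy is to interleave such $i = 0$ moves (the only moves that can decrease Bruhat length) with $\mathtt I$-type moves (which raise Bruhat length), in order to navigate from $w_0$ to any prescribed $w$. At each stage one either climbs via $\mathtt I$-moves to an arbitrary Bruhat-higher position, or drops via an $i = 0$ move at an intermediate $y$ satisfying $y^{-1}\vartheta \in \Delta^-$.

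To organize the combinatorics uniformly, I would lift each element of $W$ to $W_{\mathrm{aff}} = W \ltimes Q^\vee$ via $v \mapsto (v,0)$: the $\mathtt I$-moves correspond to left multiplications by the simple reflections $s_i$ ($i \in \mathtt I$), while the $i = 0$ move sends $(v, 0)$ to $s_0 \cdot (v, 0) = (s_\vartheta v, -v^{-1}\vartheta^\vee)$. In this framework a $>_q$-move is exactly a length-increasing left multiplication by a simple affine reflection with respect to the semi-infinite length function on $W_{\mathrm{aff}}$. Since this length function takes arbitrarily large values on every fibre of $\overline{\cdot} : W_{\mathrm{aff}} \to W$ (the translations $t_{\beta}$ for $\beta \in Q^\vee$ providing an unbounded supply), for any target $w$ one expects a lift $\widetilde w \in W_{\mathrm{aff}}$ of $w$ reachable from $(v, 0)$ by such multiplications, and projecting back to $W$ yields the desired $>_q$-chain.

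The main obstacle is the precise combinatorial verification that such a semi-infinite chain exists using only simple-reflection multiplications and strictly increasing semi-infinite length; equivalently, that the $i = 0$-condition $y^{-1}\vartheta \in \Delta^-$ can always be arranged by a carefully chosen preceding sequence of $\mathtt I$-moves, so that enough $i = 0$ moves can be inserted to descend to any prescribed Bruhat level. This is the technical heart of the Lenart--Naito--Sagaki--Schilling--Shimozono argument, and is carried out there via a detailed analysis of the quantum Bruhat graph.
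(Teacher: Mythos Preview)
Your proposal is essentially correct and aligns with the paper's own treatment: both lift the relation $>_q$ to $W_{\mathrm{aff}}$, identify it with (a weak variant of) the semi-infinite/quantum Bruhat order, and defer the core combinatorial connectivity statement to \cite{LNSSS1} \S 6. The paper's proof is in fact terser than yours --- it is little more than the citation --- so your additional reduction through $w_0$ (sub-claims (a) and (b)) is a helpful organizational device that the paper does not spell out, but it does not change the substance: sub-claim (a) is elementary, and sub-claim (b), as you correctly acknowledge, is exactly the LNSSS input.

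One small remark on your framing: the reduction through $w_0$ is not logically necessary and does not actually simplify the problem, since sub-claim (b) (reaching an arbitrary $w$ from $w_0$) is already the full strength of the theorem with $v = w_0$. What LNSSS supplies is precisely that any two elements of $W$ are connected by a directed path in the projected semi-infinite order, so once one cites that, the decomposition via $w_0$ is redundant. Your honest flagging of the ``main obstacle'' and the deferral to the quantum Bruhat graph analysis is exactly right; there is no elementary shortcut here.
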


\begin{proof}
The relation $>_q$ without taking the projection $W_{\mathrm{aff}} \to W$ generates an order in $W_{\mathrm{aff}}$. It is a variant of the quantum (or generic) Bruhat order in the sense that the weak Bruhat order is different from the Bruhat order (cf. Ishii-Naito-Sagaki \cite{INS16} Appendix A.3 and Bjorner-Brenti \cite{BB05}). Therefore, the assertion is included in Lenart-Naito-Sagaki-Schilling-Shimozono \cite{LNSSS1} \S 6.
\end{proof}

\begin{thm}[Kashiwara \cite{Kas05} 2.8, Naito-Sagaki \cite{NS14} \S 5]\label{NS}
Let $\lambda \in \Lambda _+$ and let $w \in W$. For each $i \in \mathtt I_{\mathrm{aff}}$ such that $\overline{s_i w} >_{q} w$, we have an identity
$$D_i ( \mathrm{ch} \, W_t ( \lambda )_{w} ) = q ^{\delta_{i,0} \left< \vartheta^{\vee}, w \lambda \right>} \cdot \mathrm{ch} \, W_t ( \lambda )_{\overline{s_i w}}.$$
\end{thm}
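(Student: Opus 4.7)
The plan is to bound both sides of the claimed identity using Lemma \ref{d-est} applied to $W_t(\lambda)_w$, viewed as a $(U_t^{\flat}\cap U_t^{\ge 0})$-module, and then to promote the resulting inequality to an equality by exhibiting an $\mathbf I_i$-stable filtration on the larger Demazure module. Theorem \ref{ind-w} lets us iterate, so it is enough to treat each simple reflection $s_i$ with $\overline{s_iw} >_q w$ separately.

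For the finite case $i \in \mathtt I$ with $s_iw > w$, the hypothesis forces $\langle \alpha_i^{\vee}, w\lambda \rangle \ge 0$, and the $\mathfrak{sl}_2$-triple at $\alpha_i$ lies inside the degree-zero Levi $L_i^0$ of $\mathbf I_i$. The quantum analogue of $F_i$ applied to $v_{w\lambda}$ reaches $v_{s_iw\lambda}$ without any loop-rotation shift, so $W_t(\lambda)_{s_iw} = U_t(\gb_i^0)\, W_t(\lambda)_w$. Lemma \ref{d-est} then yields the inequality $\mathrm{ch}\, W_t(\lambda)_{s_iw} \le D_i(\mathrm{ch}\, W_t(\lambda)_w)$, and the reverse inequality follows by producing a $\gp_i$-compatible filtration of $W_t(\lambda)_w$ whose associated graded is a direct sum of irreducible $\mathfrak{sl}_2$-modules and one-dimensional $\gb$-modules of strictly positive $\alpha_i$-weight. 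Kashiwara's global crystal basis on $W_t(\lambda)$ \cite{Kas05} provides exactly the $i$-string decomposition required.

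For the affine case $i = 0$ with $w^{-1}\vartheta \notin \Delta_+$, the relevant real $\mathfrak{sl}_2$-triple is associated to $\alpha_0 = -\vartheta + \delta$, whose lowering generator inside $\gI$ is $E_\vartheta \otimes z$. Iterated application to $v_{w\lambda}$ produces a vector of $\h$-weight $s_\vartheta w\lambda$ while raising the loop-rotation degree by $-\langle \vartheta^{\vee}, w\lambda\rangle \ge 0$. On characters this is precisely the shift $q^{\langle \vartheta^{\vee}, w\lambda\rangle}$ of the theorem, once the formal variable $q = e^\delta$ of Definition \ref{do} is identified with the $\Z$-grading of $W_t(\lambda)_w$. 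The character-level bound and its reversal then proceed as in the finite case, with the required filtration again supplied by the global basis in its affine Demazure-module incarnation.

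The main obstacle will be the construction of the $\mathbf I_i$-stable filtration that upgrades the Lemma \ref{d-est} inequality to equality: for a global Weyl module this cannot be effected by elementary $\g[z]$-representation theory and is precisely the content imported from \cite{Kas05, NS14} via the global basis and its crystal-theoretic behavior under Demazure operators. The affine case demands additional bookkeeping to match the loop-rotation grading on $W_t(\lambda)_w$ with the formal variable $q$ that $D_0$ acts through, which is where the $q^{\langle\vartheta^{\vee}, w\lambda\rangle}$ factor is produced.
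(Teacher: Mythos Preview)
Your proposal is essentially the paper's own argument: both reduce the identity to the equality criterion in Lemma \ref{d-est} and then invoke the $i$-string structure of the global crystal basis of $W_t(\lambda)_{\overline{s_iw}}$ from \cite{Kas05} (cf.\ \cite{NS14}) to supply the required filtration. One small slip: in the finite case you write $W_t(\lambda)_{s_iw} = U_t(\gb_i^0)\, W_t(\lambda)_w$, but $\gb_i^0$ only contains the raising direction $e_{\alpha_i}$; you mean the full quantum $\mathfrak{sl}_2$ (equivalently the Levi of $\gp_i$), since it is $F_i$ that carries $v_{w\lambda}$ to $v_{s_iw\lambda}$.
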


\begin{proof}
In view of Lemma \ref{d-est}, the assertion follows if the $\mathfrak{sl}_2$-crystal (corresponding to $i \in \mathtt I$) structure of $W _t ( \lambda )_{w}$ inside $W _t ( \lambda )_{s_i w}$ is a disjoint union of genuine $\mathfrak{sl}_2$-crystals and Demazure crystals (it is a crystal with one element with weight $\gamma$ so that $\left< \alpha_i ^{\vee}, \gamma \right> > 0$ in this case).

The assertion on crystal itself follows by \cite{Kas05} Lemma 2.7 as the crystal basis there is equal to these of $W_t ( \lambda )_{s_i w}$ as $\mathfrak{sl}_2$-crystals (cf. \cite{Kas05} \S 2.5, see also \cite{NS14} proof of Proposition 5.1.1).
\end{proof}

\begin{cor}\label{cNS}
Let $\lambda \in \Lambda _+$ and let $w \in W$. For each $i \in \mathtt I_{\mathrm{aff}}$ such that $\overline{s_i w} >_q w$, we have an identity
$$D_i ( \mathrm{ch} \, W ( \lambda )_{w} ) = q^{\delta_{i,0} \left< \vartheta^{\vee}, w \lambda \right>}  \cdot \mathrm{ch} \, W ( \lambda )_{\overline{s_i w}}.$$
\end{cor}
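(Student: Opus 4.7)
The plan is to reduce Corollary \ref{cNS} to Theorem \ref{NS} by establishing the Demazure refinement
\[
\mathrm{ch}\, W_t(\lambda)_w \;=\; \mathrm{ch}\, W(\lambda)_w \qquad (w \in W,\; \lambda \in \Lambda_+) \tag{$\star$}
\]
of the character identity $\mathrm{ch}\, W_t(\lambda) = \mathrm{ch}\, W(\lambda)$ supplied by Theorem \ref{qwc}. Once $(\star)$ is available, the corollary is immediate: applying $D_i$ and using Theorem \ref{NS} we obtain
\[
D_i(\mathrm{ch}\, W(\lambda)_w) \;=\; D_i(\mathrm{ch}\, W_t(\lambda)_w) \;=\; q^{\delta_{i,0}\left<\vartheta^{\vee},\, w\lambda\right>}\, \mathrm{ch}\, W_t(\lambda)_{\overline{s_i w}} \;=\; q^{\delta_{i,0}\left<\vartheta^{\vee},\, w\lambda\right>}\, \mathrm{ch}\, W(\lambda)_{\overline{s_i w}},
\]
the final equality being $(\star)$ applied to $\overline{s_i w}$.

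To prove $(\star)$ I would proceed by induction along a quantum Bruhat chain provided by Theorem \ref{ind-w}, taking the base case to be $v = w_0$: there $W(\lambda)_{w_0} = W(\lambda)$ and $W_t(\lambda)_{w_0} = W_t(\lambda)$, so $(\star)$ at $w_0$ is exactly Theorem \ref{qwc}. For the inductive step, given $(\star)$ at $v$ and $i \in \mathtt I_{\mathrm{aff}}$ with $\overline{s_i v} >_q v$, I would establish two matching inequalities. The upper bound
\[
\mathrm{ch}\, W(\lambda)_{\overline{s_i v}} \;\le\; q^{\delta_{i,0}\left<\vartheta^{\vee},\, v\lambda\right>}\, D_i\bigl(\mathrm{ch}\, W(\lambda)_v\bigr)
\]
is supplied by Lemma \ref{d-est} applied to the action of the $\mathfrak{sl}_2$-component associated with $i$ (in the finite case directly, and in the case $i = 0$ after first twisting by the loop rotation to translate the affine simple root to the finite setting). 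For the lower bound I would transport through the $\C[t]$-lattice: Kashiwara's global basis theory furnishes a canonical basis of $W_t(\lambda)$ whose Demazure subcrystal spans a sublattice of character $\mathrm{ch}\, W_t(\lambda)_{\overline{s_i v}}$; its $t = 1$ specialization lies in a $U(\gI)$-submodule of $W(\lambda)$ containing $v_{\overline{s_iv}\lambda}$ and therefore is contained in $W(\lambda)_{\overline{s_i v}}$ by cyclicity. This yields
\[
\mathrm{ch}\, W(\lambda)_{\overline{s_i v}} \;\ge\; \mathrm{ch}\, W_t(\lambda)_{\overline{s_i v}} \;=\; q^{\delta_{i,0}\left<\vartheta^{\vee},\, v\lambda\right>}\, D_i\bigl(\mathrm{ch}\, W_t(\lambda)_v\bigr) \;=\; q^{\delta_{i,0}\left<\vartheta^{\vee},\, v\lambda\right>}\, D_i\bigl(\mathrm{ch}\, W(\lambda)_v\bigr),
\]
using Theorem \ref{NS} and the inductive hypothesis. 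Combining the two inequalities gives $(\star)$ at $\overline{s_i v}$ as well as the identity of Corollary \ref{cNS} at $v$.

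The main obstacle is the compatibility between the $\C[t]$-integral form of $W_t(\lambda)_w$ coming from Kashiwara's global basis and the $U(\gI)$-cyclic structure of $W(\lambda)_w$ at $t = 1$. Concretely, one must verify that the canonical element corresponding to $v_{w\lambda}$ specializes to a non-zero scalar multiple of $v_{w\lambda} \in W(\lambda)$, and that the induced action of the integral form $\mathbf U_t^{\ge 0}$ restricts along the Demazure sublattice in a way compatible with the classical $U(\gI)$-action. This parallels the technical comparison between Naito--Sagaki \cite{NS14} Theorem 6.4.1 and Chari--Ion \cite{CI14} Proposition 4.3 already invoked in the proof of Theorem \ref{qwc}; adapting those inputs to the Demazure subobject should be mechanical, but is the place where care is genuinely required.
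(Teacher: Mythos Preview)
Your overall architecture matches the paper's: induction along $>_q$ from $w_0$, base case Theorem~\ref{qwc}, inductive step via Theorem~\ref{NS} and Lemma~\ref{d-est}. But the ``lower bound'' step is reversed, and this breaks the sandwich.

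You write that the $t=1$ specialization of $W_t(\lambda)_{\overline{s_iv}}$ ``is contained in $W(\lambda)_{\overline{s_iv}}$ by cyclicity.'' Cyclicity says exactly the opposite: $W(\lambda)_{\overline{s_iv}} = U(\gI)\,v_{\overline{s_iv}\lambda}$ is the \emph{smallest} $\gI$-submodule of $W(\lambda)$ containing $v_{\overline{s_iv}\lambda}$, so any $\gI$-submodule containing that vector \emph{contains} $W(\lambda)_{\overline{s_iv}}$. Denoting the specialization by $W'(\lambda)_{\overline{s_iv}}$, one gets
\[
W(\lambda)_{\overline{s_iv}} \;\subset\; W'(\lambda)_{\overline{s_iv}}, \qquad\text{hence}\qquad \mathrm{ch}\,W(\lambda)_{\overline{s_iv}} \;\le\; \mathrm{ch}\,W_t(\lambda)_{\overline{s_iv}},
\]
not $\ge$. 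Your ``upper bound'' from Lemma~\ref{d-est} also gives $\le$, so both inequalities point the same way and you cannot conclude $(\star)$ at $\overline{s_iv}$.

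The paper closes this gap with a genuinely additional argument. Using the induction hypothesis $W(\lambda)_w = W'(\lambda)_w$, it first records the easy inclusion $U(\gI_i)W(\lambda)_w \subset W'(\lambda)_{\overline{s_iw}}$, and then proves the \emph{equality}
\[
U(\mathfrak{sl}_2)\,W'(\lambda)_w \;=\; W'(\lambda)_{\overline{s_iw}}
\]
by showing that under $t\to 1$ no $\C$-linear combination of global basis elements labelled by non-highest $\mathfrak{sl}_2$-crystal elements can produce a nonzero $\mathfrak{sl}_2$-highest weight vector (this uses the upper-triangularity of the global basis multiplication rules, cf.\ \cite{Kas05} Definition~2.4(iii)). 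Combined with the PBW identity $W(\lambda)_{\overline{s_iw}} = U(\gI_i)\,v_{\overline{s_iw}\lambda} = U(\mathfrak{sl}_2)\,W(\lambda)_w$ (which uses that $v_{\overline{s_iw}\lambda}$ is $\mathfrak{sl}_2$-lowest), this forces $W(\lambda)_{\overline{s_iw}} = W'(\lambda)_{\overline{s_iw}}$ and hence the Demazure identity. This global-basis step is the missing idea in your proposal; the ``mechanical'' compatibility you flag in your last paragraph is not the issue, the inequality direction is.
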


\begin{proof}
For each $w \in W$, we set $W' ( \lambda )_{w}$ to be the specialization of a module $W_t ( \lambda )_{w}$ by setting $t = 1$ in their $\C [t]$-lattice spanned by the global bases. By using a $\C [t]$-lattice of $W_t ( \lambda )_w \subset W_t ( \lambda )$, the specialization map $t \to 1$ yields an $\gI$-module inclusion $W' ( \lambda )_w \subset W ( \lambda )$. Since $W' ( \lambda )_w$ shares a vector $v_{w \lambda}$ with $W ( \lambda )_w$, we have $W ( \lambda )_w \subset W' ( \lambda )_w$. In particular, we have $\mathrm{ch} \, W ( \lambda )_w \le \mathrm{ch} \, W' ( \lambda )_w$ for each $w \in W$. By Theorem \ref{qwc}, this is an equality for $w = w_0$.

We prove the assertion on induction on $w \in W$ from $w_0$ using $\{s_i\}_{i \in \mathtt I_{\mathrm{aff}}}$, that is possible in view of Theorem \ref{ind-w} and the fact that Demazure submodules of an extremal weight module are parametrized by $w \in W_{\mathrm{aff}}$, and the right multiplication of $t_{\beta}$ induces an automorphism of an extremal weight module that yields an isomorphism $W' ( \lambda )_{\overline{w}} \cong W' ( \lambda )_{\overline{w t_{\beta}}}$ (see \cite{Kas05} (2.26)). Let $i \in \mathtt I_{\mathrm{aff}}$ so that $\overline{s_i w} >_q w$. Since we have $W ( \lambda )_w = W' ( \lambda )_w$, we have
\begin{equation}
U ( \gI_i ) W ( \lambda )_w = U ( \gI_i ) W' ( \lambda )_w \subset W' ( \lambda )_{\overline{s_i w}}.\label{t=0incl}
\end{equation}
As in the proof of Theorem \ref{NS}, the global basis is compatible with the embedding $W _t ( \lambda )_{w} \subset W_t ( \lambda )_{\overline{s_iw}}$. Every global basis element of $W_t ( \lambda )_{\overline{s_iw}}$ labeled by a highest weight element viewed as a $\mathfrak{sl}_2$-crystal (corresponding $i \in \mathtt I$) belongs to $W _t ( \lambda )_{w}$. In view of Lemma \ref{d-est} and Theorem \ref{NS}, a $U_t ( \mathfrak{sl}_2 )$-highest weight vector of $W_t ( \lambda )_{\overline{s_i w}}$ is contained in $W_t ( \lambda )_w$ with grading shift $\left< \vartheta^{\vee}, w \lambda \right>$ when $i=0$. By the comparison of characters, we deduce that the dimension of the space of $U_t ( \mathfrak{sl}_2 )$-highest weight vectors of $W_t ( \lambda )_{\overline{s_i w}}$ with given weight and degree coincides with the number of highest weight elements of the Demazure crystal of $W _t ( \lambda )_{\overline{s_i w}}$ with the same weight and degree (that is finite). By the multiplication rule of the global bases (see e.g. \cite{Kas05} Definition 2.4 iii)), we deduce that a sum of global basis elements (of a fixed weight) corresponding to non-highest weight elements viewed as $\mathfrak{sl}_2$-crystal never gives rise to a non-zero $U ( \mathfrak{sl}_2 )$-highest weight vector by reduction mod $(t-1)$. Therefore, we cannot have a $\mathfrak{sl}_2$-highest weight vector in $W' ( \lambda )_{\overline{s_iw}} \backslash W' ( \lambda ) _w$ with a given $\h$-weight and degree. It follows that $U ( \mathfrak{sl}_2 ) W' ( \lambda ) _w = W' ( \lambda )_{\overline{s_iw}}$. Thus, the inclusion in (\ref{t=0incl}) is in fact an equality.

By the PBW theorem, we have $W ( \lambda )_{\overline{s_i w}} \cong U ( \mathfrak{sl}_2 ) W ( \lambda )_w$. Now Theorem \ref{NS} implies
$$D_i ( \mathrm{ch} \, W ( \lambda )_{w} ) = D_i ( \mathrm{ch} \, W' ( \lambda )_{w} ) = q^{\delta_{i,0} \left< \vartheta^{\vee}, w \lambda \right>} \cdot \mathrm{ch} \, W' ( \lambda )_{\overline{s_iw}}  = q^{\delta_{i,0} \left< \vartheta^{\vee}, w \lambda \right>} \cdot \mathrm{ch} \, W ( \lambda )_{\overline{s_iw}},$$
which proceeds the induction as required.
\end{proof}

\begin{prop}\label{inf-norm}
For each $w \in W$, the ring $R ( w )$ is normal.
\end{prop}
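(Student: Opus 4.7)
The plan is to adapt the normality argument of Proposition \ref{pnw} (which established normality of $R(w_0)$ in Corollary \ref{pnwc}) to general $w \in W$, by now leveraging the Demazure character formula of Corollary \ref{cNS} together with Theorem \ref{ind-w}. As in loc.~cit., I would first show that $R(w)$ is an integral domain: pick non-zero sections $y_i \in W(\varpi_i)_w^*$ for $i \in \mathtt I$ (which exist by Lemma \ref{rest}) and localize at $y = \{y_i\}_{i \in \mathtt I}$. Each $\sQ(w,\beta)$ is a closed irreducible subvariety (by Lemma \ref{Zdense}) of the normal variety $\sQ(\beta)$ (Theorem \ref{BFnormal}), so each $R(w,\beta)_y$ is a domain, and a Serre-vanishing argument parallel to Proposition \ref{pnw} identifies $R(w)_y$ with $\varprojlim_\beta R(w,\beta)_y$, so $R(w)$ is a domain.

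For integral closedness I would proceed by induction on $w \in W$ with respect to the quantum Bruhat order $>_q$ of Theorem \ref{ind-w}, taking $w = w_0$ (Corollary \ref{pnwc}) as the base and descending to arbitrary $w$ along a chain $w_0 = \overline{s_{i_1}\cdots s_{i_\ell} w} >_q \cdots >_q w$ provided by that theorem. For the inductive step, assume $\overline{s_i w} >_q w$ and that $R(\overline{s_i w})$ is normal. The identity $W(\lambda)_{\overline{s_i w}} = U(\mathfrak{sl}_2) W(\lambda)_w$ (as established in the proof of Corollary \ref{cNS}) dualizes to a surjective ring map $R(\overline{s_i w}) \twoheadrightarrow R(w)$, and the equality of characters in Corollary \ref{cNS} combined with the filtration criterion in Lemma \ref{d-est} yields the identification $W(\lambda)_{\overline{s_i w}}^* \cong H^0(\P^1, \mathop{SL}(2) \times^B W(\lambda)_w^*)$. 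Geometrically this realizes an appropriate truncation of $\sQ(\overline{s_i w})$ as a $P_i$-twisted bundle over the corresponding truncation of $\sQ(w)$, so faithfully flat descent along this bundle transports normality from $R(\overline{s_i w})_y$ to $R(w)_y$. Patching across the generic choices of $y$ (whose loci $\bigcap_i\{y_i\neq 0\}$ cover $\sQ(w)$, by the same DP-data argument as in Proposition \ref{pnw}) yields normality of $R(w)$.

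The main obstacle I anticipate is the inductive step for $i = 0$. Here the Demazure operator $D_0$ carries the non-trivial grading shift $q^{\langle \vartheta^\vee, w\lambda\rangle}$ of Corollary \ref{cNS}, and the associated ``$P_0$-direction'' does not sit inside $G[z]$ (only inside $G(z)$), so the bundle picture supplied by Lemma \ref{d-est} is no longer finite-dimensional over the base. To control this case I would work with the $\mathbf I^{\le\gamma}$-truncations of the $\mathbf I$-ind-action on $\sQ(w,\beta)$, use the $\Gm$-equivariance of $\mathcal O_{\sQ(w)}(\lambda)$ to track the $q$-shift, and invoke the Kodaira-type vanishing of Proposition \ref{coh-tw} to ensure that the higher cohomology on each $P_0$-orbit closure vanishes. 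This should reduce the $i=0$ inductive step to a finite-dimensional normality comparison that is of the same shape as the classical Demazure--Joseph situation recalled in Theorem \ref{DCF}.
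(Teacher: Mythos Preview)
Your induction runs in the wrong direction, and the step you propose does not go through.  You assume $R(\overline{s_i w})$ is normal and try to deduce that $R(w)$ is normal via ``faithfully flat descent along the $P_i$-bundle''.  But the relevant bundle is $\mathop{SL}(2)\times^{B_i^0}\sQ(w)\to\P^1$, whose \emph{fibre} is $\sQ(w)$, while $\sQ(\overline{s_i w})$ is the \emph{image} of the multiplication map $q_i$.  Faithfully flat descent of normality says: if $A\to B$ is faithfully flat and $B$ is normal then $A$ is normal.  Neither the quotient map $R(\overline{s_i w})\twoheadrightarrow R(w)$ nor the restriction-to-fibre map fits this pattern; quotients and fibres of normal schemes are not normal in general, so knowing $R(\overline{s_i w})$ normal tells you nothing about $R(w)$.

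The paper runs the induction the other way.  Starting from $w_0$ (Corollary~\ref{pnwc}), one assumes $R(w)$ normal and proves $R(\overline{s_i w})$ normal; Theorem~\ref{ind-w} (with $v=w_0$) guarantees every element of $W$ is reached by such an ascending chain.  The key point is purely algebraic: form the associated bundle $\mathcal R_i(w):=\mathop{SL}(2)\times^{B_i^0}R(w)$ over $\P^1$.  Each stalk is $R(w)\otimes_\C\cO_{\P^1,x}$, which is normal because $R(w)$ is normal and $\cO_{\P^1,x}$ is a regular local $\C$-algebra.  Then $H^0(\P^1,\mathcal R_i(w))$ is the intersection of these stalks inside their common fraction field, hence normal.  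Finally Corollary~\ref{cNS} and Lemma~\ref{d-est} force the character equality $\mathrm{ch}\,R(\overline{s_i w})=\mathrm{ch}\,H^0(\P^1,\mathcal R_i(w))$, so the natural inclusion $R^\#(\overline{s_i w})\hookrightarrow H^0(\P^1,\mathcal R_i(w))$ is dense and the completion gives $R(\overline{s_i w})\cong H^0(\P^1,\mathcal R_i(w))$.  This direction needs no separate treatment of $i=0$: one simply uses the $\mathop{SL}(2)=L_0^0$ sitting in $\mathbf I_0$, and the $q$-shift is absorbed by extending the definition of $R(w)$ to $w\in W_{\mathrm{aff}}$ via the translation embeddings $W(\lambda)_w\hookrightarrow W(\lambda)_w$ (multiplication by powers of $z$), which give isomorphic algebras up to a degree twist.

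A second problem: your proposed fix for $i=0$ invokes Proposition~\ref{coh-tw}, but that result is proved later using Theorem~\ref{main}, which in turn relies on Proposition~\ref{inf-norm}.  So that route would be circular.
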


\begin{proof}
For each $\beta \in Q_+^{\vee}$ and $i \in \mathtt I$, we have a $\g [z]$-module embedding $V ( \varpi_i ) \otimes \C [z] \rightarrow V ( \varpi_i ) \otimes \C [z]$ induced by the multiplication by $z ^{\left< \beta, \varpi_i \right>}$. Taking their projective covers extends this map to an embedding $W ( \varpi_i ) \hookrightarrow W ( \varpi_i )$ by Theorem \ref{free}. By Corollary \ref{wmul}, this induces a $\g [z]$-module embedding $W ( \lambda ) \hookrightarrow W ( \lambda )$ induced by the multiplication of the product of $\left< \beta, \varpi_i \right>$-th power of a degree $\lambda_i$ primitive generator of $\C [\A^{(\lambda_i)}] \subset \C [\A^{(\lambda)}]$ in Theorem \ref{free} 2).

This endomorphism is the same (up to scalar) as the action of a lift of $t_{\beta} \in W_{\mathrm{aff}}$ to $H (z)$ in view of the embedding (\ref{Pemb}) (with an extension of the scalar to $\C (z)$ if necessary). In addition, it also corresponds to the twist of cyclic vectors of Demazure modules corresponding to $D_{t_{\beta}}$ in accordance with Corollary \ref{cNS}. Therefore, it extends to an inclusion $W ( \lambda )_w \hookrightarrow W ( \lambda )_w$ for each $w \in W$. It further gives rise to a surjection $R ( w ) \longrightarrow \!\!\!\!\! \rightarrow R ( w )$ of algebras induced by each $\beta \in Q^{\vee}_+$. Hence, the definition of $R ( w )$ can be naturally extended to $w \in W _{\mathrm{aff}}$, with the difference by a translation part gives rise to an isomorphic algebra with degree twists in accordance with Corollary \ref{cNS}. (These are rephrasements of the inclusions $\sQ \hookrightarrow \sQ$ and $\sQ ( w ) \hookrightarrow \sQ ( w )$ given by twisting defects supported on $0$, though the latter is yet to be established.) In view of this, we can prove the assertion by induction on $>_q$ using Theorem \ref{ind-w}. The case $w = w_0$ is Corollary \ref{pnwc}. We assume the assertion for $w \in W$ and find $i \in \mathtt I_{\rm{aff}}$ so that $\overline{s_i w} >_q w$.

The algebra $R ( w )$ admits a $B^0_i$-module structure. In addition, we can write $R ( w ) := \varprojlim_{m} \, R ( w )_m$, where $\{ R ( w )_m \}_m$ is a suitable surjective projective system of $( H \cdot B^0_i )$-stable graded quotients of $R ( w )$ that are ($\Lambda_+$-graded componentwise) finite dimensional vector spaces of bounded degrees (thanks to the fact that each graded component of $W ( \lambda )_w$ is finite-dimensional, we can deduce that all projective systems yield the same topological ring, and hence the choice of $R( w ) _m$ is not important). We form an ind-vector bundle $\mathcal R_i ( w ) := \varinjlim_m \, \mathop{SL} ( 2 ) \times^{B^0_i} R ( w )_m ^*$ over $\P^1$. Fix $x \in \P^1$, and find a local coordinate $t_x$ of $x$. We have $\C [t_x]_{(0)} \cong \cO_{\P^1,x}$ as a ring, where $(0)$ denote the localization along $t_x = 0$. The stalk of $\mathcal R_i ( w )$ at $x$ is isomorphic to the scalar extension $R ( w ) \otimes_{\C} \C [t_x]_{(0)}$, and hence is normal.
Now we have
$$H ^0 ( \P^1, \mathcal R_i ( w ) ) = \bigcap _{x \in \P^1} R ( w ) \otimes_{\C} \C [t_x]_{(0)} \subset \mathrm{Frac} ( R ( w ) \otimes_{\C} \C ( \P^1 ) ).$$
Since the intersection of normal rings that shares the same fraction field is normal (by the definition of integral closure), we conclude that the ring $H ^0 ( \P^1, \mathcal R_i ( w ) )$ is normal. By construction, we have $W ( \lambda )_{\overline{s_i w}} = U ( \gp_{i} ) W ( \lambda )_{w} \subset W ( \lambda )$ for each $\lambda \in \Lambda_+$ (with a possible degree twist of $W ( \lambda )$). By Lemma \ref{d-est}, we deduce
\begin{equation}
R^{\#}_{\clubsuit} ( \overline{s_i w} ) = U ( \gp_i  ) R^{\#} ( w ) \hookrightarrow H ^0 ( \P^1, \mathcal R_i ( w ) ),\label{char}
\end{equation}
where $R^{\#}_{\clubsuit} ( \overline{s_iw} )$ is obtained by a degree twist of $W ( \lambda )_{\overline{s_i w}}$ by $\left< \vartheta^{\vee}, w \lambda \right>$ when $i = 0$. In particular, we have an inclusion $R^{\#} ( \overline{s_i w} ) \hookrightarrow H ^0 ( \P^1, \mathcal R_i ( w ) )$ of algebras. Therefore, the comparison of Corollary \ref{cNS} with $(\ref{char})$ forces $R ( \overline{s_i w} ) \cong H ^0 ( \P^1, \mathcal R_i ( w ) )$ (through Lemma \ref{d-est}). This shows that $R ( \overline{s_i w} )$ is a normal ring, and the induction proceeds.
\end{proof}

\begin{lem}\label{1step}
Let $\beta \in Q_+^{\vee}$, $w \in W$, and $i \in \mathtt I$ so that $s_i w > w$. We have a surjective map $q_i : P_i \times ^B \sQ ( w, \beta ) \rightarrow \sQ ( s_i w, \beta )$. Similarly, we have a surjective map $P_i \times ^B \sQ ( w ) \rightarrow \sQ ( s_i w )$ that we denote by the same letter.
\end{lem}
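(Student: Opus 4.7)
The plan is to construct $q_i$ as the $B$-quotient of the natural action morphism coming from the $G[z]$-action on $\sQ$. First, I would observe that $P_i$ acts on each $\sQ ( \beta )$: the embedding $P_i \subset G \subset G [z]$ through constant loops lands in a subgroup whose action on $\sQ$ preserves the defect (as noted in Section 2), hence preserves each connected component $\sQ ( \beta )$. Restricting to $B \subset P_i$, the $G[z]$-equivariance of $\mathtt{ev}_0$ together with $B \cdot X ( w ) = X ( w )$ implies that the subset $\mathtt{ev}_0^{-1} ( X ( w ) ) \cap \sQ_0 \cap \sQ ( \beta )$ is $B$-stable; since every $b \in B$ acts as an automorphism of $\sQ ( \beta )$, this $B$-invariance passes to the closure, so $B$ acts on $\sQ ( w, \beta ) = \sQ ( w ) \cap \sQ ( \beta )$.

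Next, I would use the classical identity $P_i \cdot X ( w ) = X ( s_i w )$ in $X$, valid because $s_i w > w$, combined with the $G[z]$-equivariance of $\mathtt{ev}_0$, to obtain
\[
P_i \cdot \left( \mathtt{ev}_0^{-1} ( X ( w ) ) \cap \sQ_0 \cap \sQ ( \beta ) \right) \subset \mathtt{ev}_0^{-1} ( X ( s_i w ) ) \cap \sQ_0 \cap \sQ ( \beta ).
\]
As each $p \in P_i$ acts as an automorphism of $\sQ ( \beta )$, taking closures commutes with this action, producing $P_i \cdot \sQ ( w, \beta ) \subset \sQ ( s_i w, \beta )$. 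The resulting morphism $\mu : P_i \times \sQ ( w, \beta ) \to \sQ ( s_i w, \beta )$ satisfies $\mu ( p b, x ) = p \cdot ( b \cdot x ) = \mu ( p, b \cdot x )$ for every $b \in B$, so it factors through the quotient $P_i \times^B \sQ ( w, \beta )$ and yields the desired $q_i$.

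For the ind-scheme version $P_i \times^B \sQ ( w ) \to \sQ ( s_i w )$, I would observe that the defect-adding embedding $\imath^{\beta, \beta'} : \sQ ( \beta ) \hookrightarrow \sQ ( \beta + \beta' )$, which enlarges the defect by $\beta' \otimes ( \infty )$, is automatically $P_i$-equivariant (the constant loops in $P_i$ do not affect defects). The collection of the morphisms $q_i$ for varying $\beta$ is therefore compatible with the transition maps, so it assembles into a morphism of the direct limits, which is the desired map.

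The only potential subtlety is the step where closure is interchanged with the $P_i$-action, but this is immediate because each individual group element acts by an automorphism of $\sQ ( \beta )$; no deeper input beyond the defect-preservation and $\mathtt{ev}_0$-equivariance already recorded in Section 2 is needed.
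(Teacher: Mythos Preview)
Your argument correctly produces the map $q_i$, and the closure manipulation is sound: each $p\in P_i$ acts as an automorphism of $\sQ(\beta)$, so $p\cdot\overline{\sQ_0(w,\beta)}=\overline{p\cdot\sQ_0(w,\beta)}\subset\overline{\sQ_0(s_iw,\beta)}=\sQ(s_iw,\beta)$, and the descent to $P_i\times^B$ is standard. For the literal statement this suffices.

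However, the paper's own proof extracts a stronger conclusion that you do not address: it shows that $q_i$ is \emph{surjective}, i.e.\ $\mathrm{Im}\,q_i=\sQ(s_iw,\beta)$. The paper argues this by observing that $\sQ(w,\beta)$ is irreducible and projective (since $\sQ(\beta)$ is), hence $P_i\times^B\sQ(w,\beta)$ is irreducible and projective, hence its image under $q_i$ is a closed irreducible subvariety of $\sQ(\beta)$ containing the open set $\sQ_0(s_iw,\beta)$; therefore the image equals the closure $\sQ(s_iw,\beta)$. This surjectivity is what is actually used downstream (for example in the proof of Theorem~\ref{main}, where one needs $(q_i)_*\cO\cong\cO_{\sQ(s_iw)}$, and in Lemma~\ref{D-van0}). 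Your framework yields surjectivity as well with one extra line: the inclusion you wrote before taking closures,
\[
P_i\cdot\bigl(\mathtt{ev}_0^{-1}(X(w))\cap\sQ_0(\beta)\bigr)\subset \mathtt{ev}_0^{-1}(X(s_iw))\cap\sQ_0(\beta),
\]
is in fact an equality because $P_i\cdot X(w)=X(s_iw)$ and $\mathtt{ev}_0$ is $P_i$-equivariant; combined with the properness of $P_i\times^B\sQ(w,\beta)$ this gives $\mathrm{Im}\,q_i=\sQ(s_iw,\beta)$. You should record this, since later sections rely on it.
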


\begin{proof}
The variety $\sQ ( \beta )$ is irreducible, and so is its open subset $\sQ_0 ( \beta )$. Since $X (w)$ is connected, we deduce that $\sQ_0 ( w, \beta )$, and hence $\sQ ( w, \beta )$ is irreducible. As $\sQ ( \beta )$ is projective, so is $\sQ ( w, \beta )$. Therefore, the image of $q_i$ is irreducible and projective. In addition, we have $\sQ _0 ( s_i w, \beta ) \subset \sQ_0 ( \beta ) \cap \mathrm{Im} \, q_i$, that is actually an open dense subset of $\mathrm{Im} \, q_i$. Therefore, we conclude $\sQ ( s_i w, \beta ) = \mathrm{Im} \, q_i$, that implies the first assertion. The second assertion is now clear.
\end{proof}

\begin{lem}\label{1step0}
Let $\beta \in Q_+^{\vee}$ and $w \in W$ so that $w^{-1} \vartheta \not\in \Delta^+$. We have a map $q_0 : \mathop{SL} ( 2 ) \times ^{B _0^0} \overline{B_0^0 \sQ ( w, \beta )} \rightarrow \sQ ( s_{\vartheta} w, \beta + \gamma )$ for some $ \gamma \le 2 \vartheta^{\vee}$ that is independent of $\beta$. Similarly, we have a map $\mathbf I_0 \times ^{\mathbf I} \sQ ( w ) \rightarrow \sQ$ $($that we denote by the same letter$)$ whose image is $\sQ ( s_{\vartheta} w )$ with an appropriate twist of the defect at $0$.
\end{lem}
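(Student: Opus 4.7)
The approach mirrors Lemma \ref{1step}, the essential difference being that $L_0^0 = \mathrm{SL}(2)$ is not contained in $G[z]$ --- its negative root subgroup is generated by $e_\vartheta \otimes z^{-1}$. Thus the $L_0^0$-action lives naturally on the ambient $\mathbf Q$ (which carries an $\mathbf I_0^\wedge$-action), not on $\sQ$, and the defect shift $\gamma \le 2\vartheta^\vee$ in the statement compensates for this via the factorization $s_0 = s_\vartheta t_{-\vartheta^\vee}$ in $W_{\mathrm{aff}}$.

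The plan is in three steps. Step 1: verify $B_0^0 \subset \mathbf I^{\le \vartheta^\vee}$, so that $\overline{B_0^0 \sQ(w,\beta)}$ is an irreducible closed subvariety of $\sQ(\beta + \gamma_1)$ with $\gamma_1 \le \vartheta^\vee$. This is because $B_0^0$ is generated by $H$ and the root subgroup of $\alpha_0 = -\vartheta + \delta$, realized as $f_\vartheta \otimes z$; its iterates act on $V(\varpi_i) \otimes \C[z]_{\le n}$ raising the $z$-degree by at most the nilpotency index of $f_\vartheta$ on $V(\varpi_i)$, and irreducibility follows from that of $\sQ(w,\beta)$ noted in Lemma \ref{1step}. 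Step 2: construct $q_0$ by working first in $\mathbf Q$, where $\mathbf I_0^\wedge \supset L_0^0$ acts. Transplanting the argument of Lemma \ref{1step} into $\mathbf Q$, with Lemma \ref{Zdense} supplying a dense $\mathbf I$-orbit, shows that the $L_0^0$-orbit closure of $\mathbf Q(w)$ equals $\mathbf Q(s_\vartheta w)$. To pull this back into $\sQ$, I would lift $t_{-\vartheta^\vee}$ to $H(z)$, which acts on $\sQ$ by multiplication by $z^{\langle \vartheta^\vee, \varpi_i \rangle}$ on the $i$-th Pl\"ucker factor of (\ref{Pemb}) --- the ``twist by the defect at $0$'' already used in the proof of Proposition \ref{inf-norm}. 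Composed with the bound from Step 1 this yields $\gamma \le 2\vartheta^\vee$, independent of $\beta$.

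Step 3 identifies $\mathrm{Im}\, q_0$ with $\sQ(s_\vartheta w, \beta + \gamma)$. The image is irreducible and projective since its source is; it meets $\sQ_0(s_\vartheta w, \beta + \gamma)$ densely, because the $L_0^0$-orbit of a generic constant quasi-map in $\sQ_0(w,\beta)$ produces, after the defect twist, a constant quasi-map based at $x_{s_\vartheta w}$, which by $G[z]$-equivariance opens up the dense orbit of $\sQ_0(s_\vartheta w, \beta+\gamma)$. Passing to the inductive limit in $\beta$ gives the ind-scheme version $\mathbf I_0 \times^{\mathbf I} \sQ(w) \to \sQ$, with the asserted twist by the defect at $0$ encoded in the accumulated shift. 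The main obstacle is Step 2: reconciling the $L_0^0$-action naturally defined on $\mathbf Q$ with a genuine morphism of ind-schemes actually landing in $\sQ$ forces the explicit bookkeeping of defects at $0$ through $s_0 = s_\vartheta t_{-\vartheta^\vee}$. This is the geometric manifestation of the quantum Bruhat order condition $\overline{s_0 w} >_q w$ and of the degree twist by $\langle \vartheta^\vee, w\lambda\rangle$ appearing in Corollary \ref{cNS}.
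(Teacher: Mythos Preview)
Your proposal is correct and follows essentially the same approach as the paper: both arguments hinge on Lemma \ref{Zdense}, on tracking where the base point $[v_{w\varpi_i}]$ is sent under the $L_0^0 = \mathrm{SL}(2)$-action, and on a defect twist at $0$ to land back inside $\sQ$. The paper is simply more concrete: rather than passing through the infinite-type scheme $\mathbf Q$, it writes the $\mathrm{SL}(2)$-action directly on the finite Pl\"ucker model as a map
\[
\mathrm{SL}(2) \times \prod_{i \in \mathtt I} \P(V(\varpi_i) \otimes \C[z]_{\le m}) \longrightarrow \prod_{i \in \mathtt I} \P(V(\varpi_i) \otimes z^{-m_i}\C[z]_{\le m+2m_i}), \qquad m_i = \langle \vartheta^\vee, \varpi_i\rangle,
\]
computes $[v_{w\varpi_i}] \mapsto [v_{s_\vartheta w\varpi_i} \otimes z^{\langle \vartheta^\vee, w\varpi_i\rangle}]$, and reads off the required degree twist $\langle w^{-1}\vartheta^\vee, \varpi_i\rangle$ together with the bound $\gamma \le 2\vartheta^\vee$ from $-\vartheta^\vee \le w^{-1}\vartheta^\vee \le \vartheta^\vee$. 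This avoids your Step~2 detour through $\mathbf Q$, where transplanting the projectivity-based closure argument of Lemma \ref{1step} is not literally available and would need to be replaced by a direct appeal to the $\mathbf I_0^\wedge$-orbit structure; your conclusion is the same, but the paper's route makes the bookkeeping of $\gamma$ entirely explicit in one step rather than two.
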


\begin{proof}
We have a map $\mathop{SL} ( 2 ) \to G ( z )$ so that its image is $L_0^0$. We have a map
$$\mathop{SL} ( 2 ) \times \prod_{i \in \mathtt I} \P ( V ( \varpi_i ) \otimes \C [z] _{\le m} ) \longrightarrow \prod_{i \in \mathtt I} \P ( V ( \varpi_i ) \otimes z^{-m_i} \C [z] _{\le m + 2 m_i} ),$$
where $m_i := \left< \vartheta^{\vee}, \varpi_i \right>$ for $i \in \mathtt I$. This map does not preserve $\sQ ( \beta )$ (in usual and ind- senses), but we see that $B_0^0 [v_{w \varpi_i}] = [v_{w \varpi_i}]$ and $[v_{s_{\vartheta} w \varpi_i} \otimes z^{\left< \vartheta ^{\vee}, w \varpi_i \right>}] \in \mathop{SL} ( 2 ) [v_{w \varpi_i}]$.

By Lemma \ref{Zdense}, the $\mathop{SL} ( 2 )$-multiplication of $\sQ ( w )$ defines a dense subset of the $\mathop{SL} ( 2 )$-multiplication in $\mathbf Q$. Here the set of $\mathbf I^{\wedge}$-orbits of $\mathbf Q$ is parametrized by a subset of $W_{\mathrm{aff}}$ (see \cite{FM} 4.2), and the $\mathop{SL} ( 2 )$-multiplication of the orbit corresponding to $w$ splits into two orbits corresponding to $w$ and $s_0 w$ by the Bruhat decomposition of $\mathop{SL} ( 2 )$. By the above calculation (and the fact that our $\mathop{SL} ( 2 )$-action does not change the defect outside of $0$), we deduce that $\mathop{SL} ( 2 ) \sQ ( w, \beta ) \subset \sQ ( s_{\vartheta} w )$ if we twist the degree of the $i$-th component of the embedding by $\left< w ^{-1} \vartheta ^{\vee}, \varpi_i \right>$. Thus, adjusting the defect (at $0$) so that $( [v_{s_{\vartheta} w \varpi_i} \otimes z^{\left< \vartheta ^{\vee}, w \varpi_i \right>}] )_{i \in \mathtt I} \in \sQ ( s_{\vartheta} w )$ and taking the limit $\beta \to \infty$ yields $\mathrm{Im} \, q_0 = \sQ ( s_{\vartheta} w )$ by Lemma \ref{Zdense}. This proves the second assertion.

Since $\vartheta^{\vee}$ is the highest short coroot, we have $\vartheta ^{\vee} \ge w ^{-1} \vartheta^{\vee} \ge - \vartheta^{\vee}$. For the first assertion, we further need to add $\vartheta^{\vee}$ to take $\gamma = \vartheta^{\vee} - \sum_{i \in \mathtt I} \left< \vartheta, w \varpi_i\right> \alpha_i ^{\vee}$ in order that the adjusted DP-data lands safely in $\sQ ( s_{\vartheta} w, \beta + \gamma )$.
\end{proof}

\begin{thm}\label{main}
For each $\lambda \in \Lambda$ and $w \in W$, it holds:
\begin{enumerate}
\item we have the following isomorphisms as $\gI$-modules:
$$H^i ( \sQ ( w ), \mathcal O _{\sQ ( w )} ( \lambda ) ) ^* \cong \begin{cases} W ( \lambda )_w & (i=0, \lambda \in \Lambda_+)\\ \{0\} & (\text{otherwise}) \end{cases};$$
\item the restriction map $\Gamma ( \sQ, \mathcal O _{\sQ} ( \lambda ) ) \longrightarrow \Gamma ( \sQ ( w ), \mathcal O _{\sQ ( w )} ( \lambda ) )$ is surjective;
\item the indscheme $\sQ ( w )$ is normal and projectively normal.
\end{enumerate}
\end{thm}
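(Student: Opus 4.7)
The plan is to induct on $w \in W$ along the order $>_q$, invoking Theorem \ref{ind-w} to climb from the starting point $v = w_0$ up to any prescribed $w$ in finitely many steps of the form $w' \mapsto \overline{s_i w'}$ with $\overline{s_i w'} >_q w'$ and $i \in \mathtt I_{\mathrm{aff}}$. For the base case $w = w_0$, we have $\sQ(w_0) = \sQ$, so part (1) is Theorem \ref{BFmain}, part (2) is trivial, and part (3) is Theorem \ref{pnw}. For the inductive step, assume the three assertions for $w$ and fix $i$ with $\overline{s_i w} >_q w$. I will use the map $q_i : P_i \times^B \sQ(w) \to \sQ(\overline{s_i w})$ from Lemma \ref{1step}, or its analogue from Lemma \ref{1step0} when $i = 0$.

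The key computation takes place on the source of $q_i$, which fibers over $P_i/B \cong \P^1$ with fiber $\sQ(w)$, on which the pullback line bundle restricts to $\cO_{\sQ(w)}(\lambda)$. The induction hypothesis controls the fibrewise cohomology, so a Leray argument reduces the cohomology of $q_i^*\cO(\lambda)$ to the cohomology on $\P^1$ of a pro-vector bundle with fiber $W(\lambda)_w^{*}$ at the base point, namely $P_i \times^B W(\lambda)_w^{*}$. I then apply Lemma \ref{d-est} to the $\gb$-submodule $W(\lambda)_w$ of the ambient $\gp_i$-module $U(\gp_i)W(\lambda)_w$: this yields a degree-zero identification with $(U(\gp_i) W(\lambda)_w)^{*}$ together with vanishing in positive degrees, provided the filtration hypothesis in Lemma \ref{d-est} holds. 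The hypothesis will be enforced by a character squeeze: Corollary \ref{cNS} gives $D_i(\mathrm{ch}\, W(\lambda)_w) = \mathrm{ch}\, W(\lambda)_{\overline{s_i w}}$ (up to the degree twist $q^{\langle \vartheta^\vee, w\lambda\rangle}$ when $i = 0$), while the obvious containment $v_{\overline{s_i w}\lambda} \in U(\gp_i) v_{w\lambda}$ produces the inclusion $W(\lambda)_{\overline{s_i w}} \subset U(\gp_i) W(\lambda)_w$; combined with the character inequality in Lemma \ref{d-est} this forces equality of both characters and modules.

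To transfer the computation to $\sQ(\overline{s_i w})$ itself, I will argue that $q_i$ is proper, surjective onto $\sQ(\overline{s_i w})$, and birational onto its image by comparing the ambient $\mathbf I$-orbit descriptions via Lemma \ref{Zdense}, so that $(q_i)_*\cO$ is the structure sheaf of the normalization $\sQ(\overline{s_i w})^{\nu}$. This exhibits $R^{\#}(\overline{s_i w}) = \bigoplus_{\lambda} W(\lambda)_{\overline{s_i w}}^{*}$ as the $\Gm$-finite part of the homogeneous coordinate ring of $\sQ(\overline{s_i w})^{\nu}$. Since $R(\overline{s_i w})$ is normal by Proposition \ref{inf-norm}, Corollary \ref{homog2} then gives projective normality of $\sQ(\overline{s_i w})$, so its normalization map is an isomorphism and both parts (1) and (3) propagate. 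Part (2) is a formal consequence: under the identification of part (1), the restriction map dualizes to the tautological inclusion $W(\lambda)_w \hookrightarrow W(\lambda)$, whose dual is surjective.

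The hardest points I foresee are: (i) the higher cohomology vanishing for non-dominant $\lambda$ along the inductive step, which must be propagated from $w_0$ through the fibration and through Lemma \ref{d-est} with some care about the grading; (ii) justifying $R^{j}(q_i)_* q_i^{*}\cO(\lambda) = 0$ for $j > 0$ together with $(q_i)_{*}\cO = \cO_{\sQ(\overline{s_i w})^{\nu}}$, which in the ind-scheme setting requires choosing ind-systems compatibly on both sides and invoking Lemma \ref{indep}; and (iii) matching the defect twist introduced by Lemma \ref{1step0} with the character shift $q^{\langle \vartheta^\vee, w\lambda\rangle}$ from Corollary \ref{cNS} in the $i = 0$ case, so that the inductive hypothesis closes correctly.
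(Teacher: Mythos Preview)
Your overall architecture matches the paper's: induct along $>_q$ via Theorem \ref{ind-w}, push the line bundle through the $\P^1$-fibration $\sQ(\mathbf i,w) \to \P^1$ to reduce to $H^*(\P^1, \mathop{SL}(2)\times^{B_i^0} W(\lambda)_w^*)$, run the character squeeze with Lemma \ref{d-est} and Corollary \ref{cNS}, and finish with Proposition \ref{inf-norm}. The difference is in how you descend from the source of $q_i$ to $\sQ(\overline{s_iw})$: you package $(q_i)_*\cO$ as $\cO$ of the normalization, whereas the paper keeps it as an auxiliary sheaf $\cO^+_{\sQ(s_iw)} := (q_i)_*\cO$ and shows directly that $\cO_{\sQ(s_iw)} = \cO^+_{\sQ(s_iw)}$.

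There is a genuine gap at the point where you invoke Corollary \ref{homog2}. That corollary requires $R^{\#}(\overline{s_iw})$ to be dense in the projective coordinate ring of $\sQ(\overline{s_iw})$ itself, but you have only identified it with the coordinate ring of the normalization $\sQ(\overline{s_iw})^{\nu}$; a priori $\Gamma(\sQ(\overline{s_iw}),\cO(\lambda))$ could be a proper subspace of $\Gamma(\sQ(\overline{s_iw})^{\nu},\cO(\lambda))$. The missing ingredient is Lemma \ref{rest}: the dual $\eta$ of the restriction from $\sQ$ lands in $W(\lambda)$ and its image contains $v_{\overline{s_iw}\lambda}$, hence contains $W(\lambda)_{\overline{s_iw}}$, giving the lower bound $\mathrm{ch}\,W(\lambda)_{\overline{s_iw}} \le \mathrm{ch}\,H^0(\sQ(\overline{s_iw}),\cO(\lambda))^*$. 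Combined with the upper bound from $\cO_{\sQ(\overline{s_iw})} \hookrightarrow (q_i)_*\cO$, this pins down $H^0(\sQ(\overline{s_iw}),\cO(\lambda))^* = W(\lambda)_{\overline{s_iw}}$ \emph{before} any normality statement, at which point Corollary \ref{homog2} is legitimately applicable. The paper carries this out and then, rather than appealing to an abstract normalization in the ind-setting, localizes on the affine opens $\mathfrak U(f)$ cut out by sections $f \in W(\rho)^*$ to upgrade the equality of global sections of $\cO(n\rho)$ and $\cO^+(n\rho)$ (for all $n$) to an equality $\cO = \cO^+$ of pro-sheaves; this is what transfers the higher vanishing from the source of $q_i$ to $\sQ(\overline{s_iw})$. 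Your claim that $(q_i)_*\cO$ literally equals the structure sheaf of a normalization, and that the normalization map is then forced to be an isomorphism, would need this same localization argument (or an ind-scheme version of Zariski's Main Theorem) to be made rigorous; the paper's route sidesteps that. Finally, your derivation of part (2) is circular as written: you use the identification of part (1) to say the restriction dualizes to $W(\lambda)_w \hookrightarrow W(\lambda)$, but it is precisely the injectivity of $\eta$ (forced by the sandwich above) that yields both (1) and (2) simultaneously.
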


\begin{proof}
We first consider the case $w = w_0$. Then, the first assertion follows by Theorem \ref{BFmain}. The second assertion is trivial, and the third assertion follows by Proposition \ref{pnw}.

Since adding defects at $0 \in \P^1$ gives an isomorphic pair of ind-schemes, we prove the assertion by induction on $>_q$ using Theorem \ref{ind-w}. We assume that the assertions hold for $w \in W$ and fix $i \in \mathtt I_{\mathrm{aff}}$ so that $\overline{s_i w} >_q w$. For the sake of simplicity, we denote $\overline{s_iw}$ by $s_i w$ during this proof.

We set $\sQ^+ ( w, \beta ) := \overline{B_i ^0 \sQ ( w, \beta )}$ for each $\beta \in Q_+^{\vee}$. We have $\sQ^+ ( w, \beta ) = \sQ ( w, \beta )$ whenever $i \in \mathtt I$, and $\sQ^+ ( w, \beta )$ forms an ind-structure of $\sQ ( w )$ by Lemma \ref{1step0}. Let us denote the image of $q_0$ in Lemma \ref{1step0} by $\sQ^+ ( s_{\vartheta} w, \beta )$ when $i = 0$. It defines an ind-structure of $\sQ ( s_{\vartheta} w )$ since we have $\sQ ( s_{\vartheta} w, \beta - 2 \vartheta ^{\vee} ) \subset \sQ^+ ( s_{\vartheta} w, \beta )$ for $\beta \gg 0$ (by examining the proof of Lemma \ref{1step0}).

We have an $\gI$-module map
$$\eta : H ^0 ( \sQ ( s_i w ), \cO_{\sQ ( s_i w )} ( \lambda ) ) ^* \rightarrow H ^0( \sQ ( w_0 ), \cO_{\sQ ( w_0 )} ( \lambda ) ) ^* = W ( \lambda )$$
arising from the dual of the restriction map. By Lemma \ref{rest}, we have $W ( \lambda ) _{s_i w} \subset \mathrm{Im} \, \eta$. In particular, we have
\begin{equation}
\mathrm{ch} \, W ( \lambda )_{s_i w} \le \mathrm{ch} \, H ^0 ( \sQ ( s_i w ), \cO_{\sQ ( s_i w )} ( \lambda ) )^*\label{fund}
\end{equation}

By Corollary \ref{cNS} and Lemma \ref{d-est}, the first assertion is equivalent to an isomorphism
\begin{equation}
H ^k ( \sQ ( s_i w ), \cO_{\sQ ( s_i w )} ( \lambda ) ) \cong H ^{k} ( \P^1, \mathop{SL} ( 2 ) \times ^{B_i^0} W ( \lambda )_w ) \hskip 3mm \text{for each} \hskip 3mm k \in \Z.\label{1st}
\end{equation}

By assumption and Lemma \ref{indep}, we deduce that
\begin{equation}
H ^k ( \sQ ( w ), \cO _{\sQ ( w )} ( \lambda ) ) \cong \varprojlim_{\beta} H ^k ( \sQ^+ ( w, \beta ), \cO _{\sQ^+ ( w, \beta )} ( \lambda ) ) \hskip 3mm \text{for each} \hskip 3mm k \in \Z.\label{ch-ind}
\end{equation}
We set $\sQ^+ ( i, w, \beta ) := \mathop{SL} ( 2, \C ) \times ^{B^0_i} \sQ^+ ( w, \beta )$. We have a commutative diagram:
\begin{equation}
\xymatrix{
\sQ^+ ( i, w, \beta ) \ar[r]^-{q_i} \ar[d]_{h_i} & \sQ ^+ ( s_i w, \beta ) \ar[d]^{\bar{h}_i}\\
\P ^1 \ar[r]^{\bar{q}_i} & \mathrm{pt}
}.\label{Qcomm}
\end{equation}

\begin{claim}\label{change}
We have $($the limit of convergent$)$ spectral sequences
$$\varprojlim_{\beta} \R ^u ( \overline{h}_i )_* ( \R ^r (q_i)_* \cO_{\sQ^+ ( i, w, \beta )} ( \lambda ) ) \Rightarrow \R ^{u+r} ( \overline{q}_i )_* ( \varprojlim_{\beta} \, (h_i)_* \cO_{\sQ^+ ( i, w, \beta )} ( \lambda ) ).$$
\end{claim}

\begin{proof}
By (\ref{ch-ind}) and the induction hypothesis, the fiber $H ^{u} ( \sQ ( w, \beta ), \cO_{\sQ ( w, \beta )} ( \lambda ) )$ of the pro-sheaf $\varprojlim _{\beta}\R ^u (h_i)_* \cO_{\sQ^+ ( i, w, \beta )} ( \lambda )$ satisfies the Mittag-Leffler condition for each fixed degree. In addition, the effect of $( \bar{q}_i )_*$ changes degrees at most by $2 \left< \vartheta^{\vee}, \lambda \right>$. Therefore, $\varprojlim$ commutes with $\mathbb R^r ( \bar{q}_i )_*$, and the (limit of the) Leray spectral sequence
$$\varprojlim_{\beta} \R ^r ( \bar{q}_i )_* \left( \R ^u (h_i)_* \cO_{\sQ^+ ( i, w, \beta )} ( \lambda ) \right) \Rightarrow \varprojlim_{\beta} H ^{u+r} ( \sQ^+ ( i, w, \beta ), \cO_{\sQ^+ ( i, w, \beta )} ( \lambda ) )$$
gives rise to the spectral sequence
$$\R ^r ( \bar{q}_i )_* \left( \varprojlim_{\beta} \R ^u (h_i)_* \cO_{\sQ^+ ( i, w, \beta )} ( \lambda ) \right) \Rightarrow \varprojlim_{\beta} H ^{u+r} ( \sQ^+ ( i, w, \beta ), \cO_{\sQ^+ ( i, w, \beta )} ( \lambda ) ).$$
Here $\varprojlim_{\beta} \R ^u (h_i)_* \cO_{\sQ^+ ( i, w, \beta )} ( \lambda )$ vanishes except for $u = 0$ by the induction hypothesis. Hence, we have
$$\R ^r ( \bar{q}_i )_* \left( \varprojlim_{\beta} (h_i)_* \cO_{\sQ^+ ( i, w, \beta )} ( \lambda ) \right) \cong \varprojlim_{\beta} H ^{r} ( \sQ^+ ( i, w, \beta ), \cO_{\sQ^+ ( i, w, \beta )} ( \lambda ) ) \hskip 5mm r \ge 0.$$

On the other hand, we have (the limit of) spectral sequences
$$\varprojlim_{\beta} \R ^u ( \overline{h}_i )_* ( \R ^r (q_i)_* \cO_{\sQ^+ ( i, w, \beta )} ( \lambda ) ) \Rightarrow \varprojlim_{\beta} H ^{u+r} ( \sQ^+ ( i, w, \beta ), \cO_{\sQ^+ ( i, w, \beta )} ( \lambda ) ),$$
that is convergent before taking $\varprojlim$. Combining the above yields the result.
\end{proof}

We return to the proof of Theorem \ref{main}. By Claim \ref{change}, we deduce a spectral sequence
\begin{equation}
H ^{t} ( \sQ ( s_i w ), \R^u (q_i)_* \cO^+_{\sQ^+ ( i, w )} ( \lambda ) ) \Rightarrow H ^{u+t} ( \P^1, \mathop{SL} ( 2 ) \times ^{B_i^0} W ( \lambda )_w ).
\end{equation}

Since the fiber of $q_i$ is contained in $\P^1$, it follows that $\R^k ( q_i )_* \cO_{\sQ^+ ( w, \beta )} = \{ 0 \}$ for $k \ge 2$. In addition, $\sQ^+ ( i, w, \beta )$ is contained in $\mathop{SL} ( 2 ) \times ^{B_i^0} \sQ^+ ( s_i w, \beta )$, where the natural prolongization of $q_i$ becomes a $\P^1$-fibration. Therefore, the short exact sequence
$$0 \rightarrow \ker \rightarrow \cO_{\mathop{SL} ( 2 ) \times ^{B_i^0} \sQ^+ ( s_i w, \beta )} \rightarrow \cO_{\sQ^+ ( i, w, \beta )} \rightarrow 0$$
yields a part of the long exact sequence
$$0 = \R^1 ( q_i )_* \cO_{\mathop{SL} ( 2 ) \times ^{B_i^0} \sQ^+ ( s_i w, \beta )} \rightarrow \R^1 ( q_i )_* \cO_{\sQ^+ ( i, w, \beta )} \rightarrow \R^2 (q_i)_* \ker = 0,$$
where the last equality follows by the relative dimension counting. Therefore, we conclude that
\begin{equation}
H ^k ( \sQ ( s_i w ), \cO^+_{\sQ ( s_i w )} ( \lambda ) ) \cong H ^{k} ( \P^1, \mathop{SL} ( 2 ) \times ^{B_i^0} W ( \lambda )_w ) \hskip 3mm \text{for each} \hskip 3mm k \in \Z,\label{2nd}
\end{equation}
where $\cO_{\sQ ( s_i w, \beta )}^+ ( \lambda ) := ( q_i )_* \cO_{\sQ^+ ( w, \beta )} ( \lambda )$. By construction, we have an embedding $\cO_{\sQ ( s_i w, \beta )} ( \lambda ) \hookrightarrow \cO_{\sQ ( s_i w, \beta )}^+ ( \lambda )$ (and we can take their inverse limits by construction). In particular, taking their global sections yield:
\begin{equation}
\mathrm{ch} \, H ^0 ( \sQ ( s_i w ), \cO_{\sQ ( s_i w )} ( \lambda ) ) ^* \le \mathrm{ch} \, H ^0 ( \sQ ( s_i w ), \cO_{\sQ ( s_i w )}^+ ( \lambda ) )^*.\label{norm}
\end{equation}

From (\ref{fund}), (\ref{norm}), and (\ref{2nd}), we deduce that
\begin{align}\nonumber
\mathrm{ch} \, W ( \lambda )_{s_i w} \le \mathrm{ch} \, H ^0 ( \sQ ( s_i w ), \cO_{\sQ ( s_i w )} ( \lambda ) ) ^* & \le \mathrm{ch} \, H ^0 ( \sQ ( s_i w ), \cO_{\sQ ( s_i w )}^+ ( \lambda ) )^*\\
& = \mathrm{ch} \, H ^{0} ( \P^1, \mathop{SL} ( 2 ) \times ^{B_i^0} W ( \lambda )_w )^*.\label{ch-comp}
\end{align}
Thanks to Corollary \ref{cNS} (and Theorem \ref{DCF}), we derive that all the inequalities in (\ref{ch-comp}) must be in fact an equality. In particular, this shows that all the sections of $\cO^+ _{\sQ ( s_i w , \beta )}( \lambda )$ and $\cO _{\sQ ( s_i w , \beta )}( \lambda )$ are the same by taking the inverse limit. A vector
$$f \in \mathrm{Im} \, ( \bigotimes _{j \in \mathtt I} W ( \varpi_j )^* \rightarrow W ( \rho )^*_{s_i w} ) \subset \mathrm{Im} \, ( R ( w_0 ) \rightarrow R ( s_i w ) )$$
defines a section of $\Gamma ( \sQ ( s_i w ), \cO_{\sQ ( s_i w )} ( \rho ) )$. Hence, it defines an inclusion $\cO _{\sQ (s_i w)} \hookrightarrow \cO _{\sQ (s_i w)} ( \rho )$ whose $n$-times repeated application gives $\cO _{\sQ ( s_i w )} \hookrightarrow \cO _{\sQ ( s_i w )} ( n \rho )$. This leads to a map $\cO^+ _{\sQ ( s_i w )} \hookrightarrow \cO _{\sQ ( s_i w )} ^+ ( n \rho )$. If $f$ is homogeneous of degree $\ge -m$, then it defines an affine open subspace  $\mathfrak U ( f, \beta )$ of each of $\sQ ( s_i w, \beta )$ for every $\beta \in Q_+^{\vee}$ so that 
\begin{equation}
\left< \beta, \varpi_i \right> \ge m \hskip 5mm \text{for each} \hskip 5mm i \in \mathtt I \label{deg-bd}
\end{equation}
by (\ref{Pemb}). Therefore, taking limit $n \to \infty$ is a localization to an affine open subset on $\sQ ( s_i w, \beta )$ whenever $\beta$ satisfies $(\ref{deg-bd})$. The affine schemes $\{\mathfrak U ( f, \beta )\}_{\beta}$ defines an ind-affine subset $\mathfrak U ( f ) : = \varinjlim_{\beta} \mathfrak U ( f, \beta )$. As the localization is flat, it commutes with $\varprojlim$ and $\Gamma$ as the condition (\ref{deg-bd}) is clearly satisfied for every $\beta' > \beta$ whenever $\beta$ satisfies $(\ref{deg-bd})$. Therefore, we conclude that
\begin{align*}
\Gamma ( \mathfrak U ( f ), \cO_{\sQ (s_i w)} ( n \rho ) ) & = \varinjlim_{n \to \infty} \Gamma ( \sQ ( s_i w ), \cO_{\sQ (s_i w)} ( n \rho ) ) \\
& = \varinjlim_{n \to \infty} \Gamma ( \sQ ( s_i w ), \cO^+_{\sQ (s_i w)} ( n \rho ) ) = \Gamma ( \mathfrak U ( f ), \cO^+_{\sQ (s_i w)} ( n \rho ) ).
\end{align*}
Since every further localization to a point of $\mathfrak U ( f )$ is realized as a projective system of local rings, we conclude that $\cO_{\sQ (s_i w)} \cong \cO^+_{\sQ (s_i w)}$ on $\mathfrak U ( f )$ (as pro-sheaves) again by the flatness of the localization. Here we have $\bigcap_f \mathfrak U ( f ) = \emptyset$ by (\ref{Pemb}) as every point of $\sQ ( s_i w )$ is a point of $\sQ ( s_i w, \beta )$ for some $\beta \in Q_+^{\vee}$. This shows that $\cO_{\sQ (s_i w)} \cong \cO^+_{\sQ (s_i w)}$ as pro-sheaves. Therefore, we conclude (\ref{1st}) (or the first assertion). The second assertion follows as $\eta$ must be an inclusion.

By the first assertion, $R ^{\#} ( w )$ is a dense subring of the projective coordinate ring of $\sQ ( w )$, and its graded completion is normal by Proposition \ref{inf-norm}. Therefore, Corollary \ref{homog2} implies the projective normality of $\sQ ( w )$.

This proceeds the induction on $w \in W$ with respect to $<_q$ from the base case $w = w_0$, and completes the proof of Theorem \ref{main}.
\end{proof}

\begin{thm}[Demazure character fomula for $\sQ ( w )$]\label{dmain}
For $\lambda \in \Lambda_+$, $\beta \in Q^{\vee}$, and $w, v \in W$ so that a reduced expression $s_{i_{l}} \cdots s_{i_1}$ of $t_{\beta} w$ yields a sequence
\begin{equation}
\overline{t_{\beta} w v} >_q \cdots >_q \overline{s_{i_2} s_{i_1} v} >_q \overline{s_{i_1} v} >_q v \hskip 3mm \text{s.t.} \hskip 3mm \ell ( t_{\beta} w v ) = l + \ell ( v )\label{length-add}
\end{equation}
$($for example, this happens when $\left< v^{-1}w^{-1} \beta, \alpha_i \right> \ge 0$ for every $i \in \mathtt I)$ we have
$$D_{t _{\beta} w} ( \mathrm{ch} \, \Gamma ( \sQ ( v ), \cO _{\sQ ( v )} ( \lambda ) )^* ) = q^{- \left< \beta, w v \lambda \right>} \cdot \mathrm{ch} \, \Gamma ( \sQ ( wv ), \cO _{\sQ ( wv )} ( \lambda ) )^*.$$
In particular, we have
$$D_w ( \mathrm{ch} \, W ( \lambda )_{v} ) = \mathrm{ch} \, W ( \lambda ) _{wv} \hskip 3mm \text{for} \hskip 3mm w,v \in W \text{ s.t. } \ell ( wv ) = \ell ( w ) + \ell ( v ).$$
\end{thm}

\begin{proof}
By the definition of $D_w$, it suffices to prove $\mathrm{ch} \, W ( \lambda ) _{s_{\vartheta} w} = q ^{- \left< \vartheta^{\vee}, w \lambda \right>} \cdot D_0 ( \mathrm{ch} \, W ( \lambda )_{w} )$ whenever $w^{-1} \vartheta \not\in \Delta^+$, and $\mathrm{ch} \, W ( \lambda ) _{s_{i} w} = D_i ( \mathrm{ch} \, W ( \lambda )_{w} )$ whenever $s_i w > w$ for $i \in \mathtt I$. We have
$$D_i ( \mathrm{ch} \, W ( \lambda )_{w} ) = \mathrm{ch} \, H ^0 ( {\mathbf I}_i \times ^{\mathbf I} \sQ (w ), \mathcal O _{\sQ ( w )} ( \lambda ) ) ^*$$
by Corollary \ref{cNS} and Theorem \ref{main} 1). Therefore, if we take into account the fact that the lowest degree term $v_{s_0 u \lambda}$ has degree $\left< \vartheta^{\vee}, u \lambda \right>$ for $u \in W$ when $\overline{s_0 u} >_q u$ and $\ell ( s_0 u ) = \ell ( u ) + 1$, then the result follows by induction with respect to (\ref{length-add}).
\end{proof}

\section{Feigin-Makedonskyi modules}

For each $\alpha \in \Delta$, we fix non-zero root vectors $e_{\alpha} \in \gu$ and $f_{\alpha} \in \gu^-$ of weight $\alpha$ and $- \alpha$, respectively. The following result is due to Feigin-Makedonskyi-Orr \cite{FMO} (see also Naito-Nomoto-Sagaki \cite{NNS} for its $t$-analogue), but we decided to include a proof as the author likes the following proof.

\begin{thm}\label{FM}
Let $\lambda \in \Lambda_+$ and $w \in W$. The module $W ( \lambda )_w$ is free over $\C [\A^{(\lambda)}]$ of rank $\dim \, W ( \lambda, 0 )$. In addition, the module $W ( \lambda )_w \otimes_{\C [\A^{(\lambda)}]} \C_0$ is generated by $v_{w\lambda}$ subject to the conditions:
\begin{itemize}
\item $( h \otimes z ) v_{w \lambda} = 0$ for every $h \in \h$;
\item In case $\alpha \in \Delta^+ \cap w \Delta ^+$, we have
$$e_{\alpha} v_{w \lambda} = 0 \hskip 3mm \text{and} \hskip 3mm ( f_{\alpha} \otimes z )^{\left< w ^{-1} \alpha^{\vee}, \lambda \right> + 1} v_{w \lambda} = 0;$$
\item In case $\alpha \in \Delta^+ \cap w \Delta ^-$, we have
$$( f _{\alpha} \otimes z ) v_{w \lambda} = 0 \hskip 3mm \text{and} \hskip 3mm e_{\alpha}^{- \left< w ^{-1} \alpha^{\vee}, \lambda \right> + 1} v_{w \lambda} = 0.$$
\end{itemize}
In other words, $W ( \lambda )_w \otimes_{\C [\A^{(\lambda)}]} \C_0$ is the generalized Weyl module $W _{w \lambda}$ in the sense of Feigin-Makedonskyi $\cite{FM}$.
\end{thm}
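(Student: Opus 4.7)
The plan is to prove freeness and the Feigin-Makedonskyi identification simultaneously, by a character-matching argument structured around Theorem \ref{dmain}.

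First, I would restrict the $\C[\A^{(\lambda)}]$-action from Theorem \ref{free}(2) to the $\gI$-submodule $W(\lambda)_w = U(\gI) v_{w\lambda}$; this is well-defined because the action factors through $U(\h[z]) \subset U(\gI)$, and torsion-freeness over $\C[\A^{(\lambda)}]$ is inherited from $W(\lambda)$. I would pin down the generic rank by specialising at $x \in \A^{(\lambda)}$ with $|\lambda|$ distinct non-zero coordinates: Theorem \ref{free}(5) identifies the fiber with $\bigotimes_{i,j} V(\varpi_i)$, the image of $z\g[z]$ inside the endomorphism algebra of this fiber is the full product $\g^{|\lambda|}$ by a Vandermonde argument, and irreducibility of the tensor product as a $\g^{|\lambda|}$-module forces the $\gI$-cyclic submodule generated by the non-zero image of $v_{w\lambda}$ to coincide with the whole fiber. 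Hence the generic rank of $W(\lambda)_w$ over $\C[\A^{(\lambda)}]$ equals $\dim W(\lambda,0)$.

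Next I would verify that the Feigin-Makedonskyi relations hold in the special fiber $W(\lambda)_w \otimes_{\C[\A^{(\lambda)}]} \C_0$, producing a surjection $\phi:W_{w\lambda} \twoheadrightarrow W(\lambda)_w \otimes \C_0$. The relation $(h\otimes z)v_{w\lambda} \equiv 0$ holds because $h\otimes z$ sits inside the irrelevant ideal of $\C[\A^{(\lambda)}]$ (via the power-sum identification of the $U(\h[z])$-action on the weight-$\lambda$ line, transported to weight $w\lambda$ by $\C[\A^{(\lambda)}]$-equivariance). For each positive root $\alpha$, an $\mathfrak{sl}_2$-triple analysis applied to $v_{w\lambda}$, combined with its extremality inside the finite-dimensional $V(\lambda) \subset W(\lambda)$ sitting in degree zero, produces the stated annihilation and integrability relations; the $\Delta^+ \cap w\Delta^+$ versus $\Delta^+ \cap w\Delta^-$ dichotomy naturally splits into ``raising at loop degree zero is forbidden'' versus ``lowering at positive loop degree is forbidden''.

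Finally, I would run a character squeeze. Graded Nakayama yields $\mathrm{ch}\,W(\lambda)_w \leq \mathrm{ch}\,\C[\A^{(\lambda)}]\cdot \mathrm{ch}(W(\lambda)_w\otimes\C_0)$, while $\phi$ gives $\mathrm{ch}(W(\lambda)_w\otimes\C_0)\leq \mathrm{ch}\,W_{w\lambda}$. On the other hand, Theorem \ref{dmain} computes $\mathrm{ch}\,W(\lambda)_w = D_w(\mathrm{ch}\,W(\lambda)_e)$, and the inductive Demazure-type description of $\mathrm{ch}\,W_{w\lambda}$ from Feigin-Makedonskyi slots into the same induction after stripping off the $\mathrm{ch}\,\C[\A^{(\lambda)}]$-factor. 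This forces both inequalities to collapse to equalities: $\phi$ becomes an isomorphism, and $W(\lambda)_w$ is free over $\C[\A^{(\lambda)}]$ of the claimed rank. The most delicate point I anticipate is the relation check in the special fiber, where one must cleanly separate the contribution of the degree-zero part of $\gI$ (encoding the Demazure relations inside $V(\lambda)$) from the strictly positive loop-degree part (encoding the $\C[\A^{(\lambda)}]$-freeness and the $z$-degree constraints); the geometric realisation via $H^0(\sQ(w),\cO_{\sQ(w)}(\lambda))^*$ from Theorem \ref{main} together with the $\mathbf I^{\leq \beta}$-filtration from $(\ref{Pemb})$ should provide a clean handle on this bookkeeping.
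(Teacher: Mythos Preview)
Your overall architecture matches the paper's: produce the surjection $W_{w\lambda}\twoheadrightarrow W(\lambda)_w\otimes_{\C[\A^{(\lambda)}]}\C_0$, establish that $W(\lambda)_w$ is torsion-free of generic rank $\dim W(\lambda,0)$ over $\C[\A^{(\lambda)}]$, compare, and conclude freeness by graded Nakayama. The two departures are worth noting.

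For the generic rank, the paper observes that $W(\lambda)_w$ contains a grading shift of $W(\lambda)=W(\lambda)_{w_0}$ as a Demazure submodule (via the $>_q$-chain of Theorem~\ref{ind-w} and Corollary~\ref{cNS}), giving the sandwich $W(\lambda)\subset W(\lambda)_w\subset W(\lambda)$ of $\C[\A^{(\lambda)}]$-modules and hence the rank immediately. Your Vandermonde argument at a point with distinct nonzero coordinates also works (the image of $z\g[z]$ is the full $\g^{|\lambda|}$, which contains the diagonal $\g$, so the $\gI$-span of $v_{w\lambda}$ in the fibre equals its $\g[z]$-span, which is everything); it is just more hands-on.

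The character squeeze, however, is where you should be careful. After Nakayama and $\phi$ you have $\mathrm{ch}\,W(\lambda)_w \le \mathrm{ch}\,\C[\A^{(\lambda)}]\cdot\mathrm{ch}\,W_{w\lambda}$, and you want equality by matching the $D_i$-recursion of Theorem~\ref{dmain} for the left side against ``the inductive Demazure-type description from Feigin--Makedonskyi'' for the right. But the recursion FM/Cherednik--Orr supply for $\mathrm{ch}\,W_{w\lambda}=E_{-w\lambda}^{\dagger}(q^{-1},\infty)$ is the one in Theorem~\ref{COr}, built from $D_i-1$ with occasional $(1-q^m)$ factors, not the bare $D_i$. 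Reconciling the two recursions is exactly the content of Corollary~\ref{nMconn}, which in this paper is \emph{deduced from} Theorem~\ref{FM}; invoking it here would be circular. The paper sidesteps this entirely: once you know the generic rank is $\dim W(\lambda,0)$, semicontinuity gives $\dim(W(\lambda)_w\otimes\C_0)\ge\dim W(\lambda,0)$, and FM's Theorem~B gives $\dim W_{w\lambda}=\dim W(\lambda,0)$, so $\phi$ is forced to be an isomorphism by a plain dimension count. Since you already have the generic rank from your Vandermonde step, you should replace the character squeeze with this dimension argument; the rest of your outline then goes through cleanly.
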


\begin{proof}
The $\C [\A^{(\lambda)}]$-action is realized by the $U ( z \h [z] )$-action on the highest weight vectors on $W ( \lambda )$, and hence so is for each extremal weight vector $v_{w \lambda}$. The other two conditions also hold for $v_{w \lambda} \in W ( \lambda )$ by examining possible $\h$-weights. As the both modules are cyclic, it follows that we have an $\gI$-module surjection
\begin{equation}
W _{w \lambda} \longrightarrow \!\!\!\!\! \rightarrow W ( \lambda )_w \otimes_{\C [\A^{(\lambda)}]} \C_0.\label{FMsurj}
\end{equation}
Since $W ( \lambda )_w$ contains some grading shift of $W ( \lambda )$ as its Demazure submodule, we conclude $W ( \lambda ) \subset W ( \lambda )_w \subset W ( \lambda )$ as $\C [\A^{(\lambda)}]$-modules. Here $W ( \lambda )$ is a free $\C [\A^{(\lambda)}]$-module of rank $\dim \, W ( \lambda, 0 )$ by Theorem \ref{free} 2). Therefore, we deduce that $W ( \lambda )_w$ is a torsion-free $\C [\A^{(\lambda)}]$-module of generic rank $\dim \, W ( \lambda, 0 )$. By the semicontinuity theorem, we have
$$\dim \, W _{w \lambda} = \dim \, W ( \lambda, 0 ) \le \dim \, W ( \lambda )_w \otimes_{\C [\A^{(\lambda)}]} \C_0,$$
where the first equality is Feigin-Makedonskyi \cite{FM} Theorem B. Therefore, (\ref{FMsurj}) forces that above inequality to be an equality. Again by (\ref{FMsurj}), we conclude $W _{w \lambda} \cong W ( \lambda )_w \otimes_{\C [\A^{(\lambda)}]} \C_0$. Moreover, this implies that $W ( \lambda )_w$ is $\C [\A^{(\lambda)}]$-free by (the graded version of) Nakayama's lemma.
\end{proof}

Let us set $\mathrm{Fl} ^{\frac{\infty}{2}} ( w )$ to be the quotient of $\sQ ( w )$ by the equivalence relation induced from the right $H [\![z]\!]_1$-action through the embedding $\sQ ( w ) \subset \mathbf Q_G ( w )$ (where $H [\![z]\!]_1 := \ker \, H [\![z]\!] \rightarrow H$).

\begin{cor}\label{BW}
We have an isomorphism
$$\Gamma ( \mathrm{Fl} ^{\frac{\infty}{2}} ( w ), \cO ( \lambda ) )^* \cong W _{w \lambda}.$$
\end{cor}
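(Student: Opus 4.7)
The plan is to identify $\Gamma(\mathrm{Fl}^{\frac{\infty}{2}}(w), \cO(\lambda))^*$ with $W(\lambda)_w \otimes_{\C[\A^{(\lambda)}]} \C_0$ and then invoke Theorem \ref{FM}. The whole point of the quotient construction $\mathrm{Fl}^{\frac{\infty}{2}}(w) = \sQ(w)/H[\![z]\!]_1$ is that the line bundle $\cO(\lambda)$ on $\sQ(w)$ is $H[\![z]\!]_1$-equivariant (because $\lambda$, viewed as a character of $H$, restricts trivially to the unipotent-type pro-subgroup $H[\![z]\!]_1$), hence descends to a line bundle on $\mathrm{Fl}^{\frac{\infty}{2}}(w)$ whose pull-back recovers $\cO_{\sQ(w)}(\lambda)$. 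Therefore sections of the descended bundle correspond exactly to $H[\![z]\!]_1$-invariant sections of $\cO_{\sQ(w)}(\lambda)$, and dually one obtains the $H[\![z]\!]_1$-coinvariants of $\Gamma(\sQ(w), \cO_{\sQ(w)}(\lambda))^*$.

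Now by Theorem \ref{main}(1) the latter dual equals $W(\lambda)_w$. Taking coinvariants by $H[\![z]\!]_1$ amounts, at the level of $\gI$-modules, to dividing out by the action of $\mathrm{Lie}\,H[\![z]\!]_1 = z\h[\![z]\!]$ (intersected with the $\gI$-action, i.e.\ $z\h[z]$). By Theorem \ref{free}(2), the free $\C[\A^{(\lambda)}]$-action on $W(\lambda)$ is induced from the $U(z\h[z])$-action on the weight $\lambda$-subspace, and this description restricts to $W(\lambda)_w$ since $v_{w\lambda}$ lies in a single $\h$-weight component and $\C[\A^{(\lambda)}]$ acts freely on $W(\lambda)_w$ by Theorem \ref{FM}. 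Consequently killing the action of $z\h[z]$ is the same as specializing the augmentation ideal of $\C[\A^{(\lambda)}]$ to zero, giving the identification
\[
\bigl(W(\lambda)_w\bigr)_{H[\![z]\!]_1} \;\cong\; W(\lambda)_w \otimes_{\C[\A^{(\lambda)}]} \C_0.
\]
Combining with Theorem \ref{FM} yields $\Gamma(\mathrm{Fl}^{\frac{\infty}{2}}(w), \cO(\lambda))^* \cong W_{w\lambda}$, as claimed.

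The main obstacle is the geometric step: one must justify that forming the pro-quotient by $H[\![z]\!]_1$ is well-behaved enough that global sections really compute the (pro-)invariants, and that dualising converts invariants into coinvariants without loss. Since $\sQ(w)$ is normal and projectively normal by Theorem \ref{main}(3), and since the $H[\![z]\!]_1$-action is free on a dense open (the torus acts through its pro-unipotent radical on the non-constant loop directions), faithfully flat descent for the line bundle $\cO(\lambda)$ applies, and the spectral sequence for the quotient degenerates because $H[\![z]\!]_1$ is pro-unipotent. The algebraic step is then painless: freeness of $W(\lambda)_w$ over $\C[\A^{(\lambda)}]$ (Theorem \ref{FM}) ensures that the coinvariant functor is exact on the relevant piece and matches the specialization at $0$.
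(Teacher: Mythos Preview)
Your proposal is correct and follows essentially the same approach as the paper: both identify sections on $\mathrm{Fl}^{\frac{\infty}{2}}(w)$ with the $z\h[z]$-invariants (equivalently $H[\![z]\!]_1$-invariants) of $\Gamma(\sQ(w),\cO_{\sQ(w)}(\lambda))$, dualize to coinvariants of $W(\lambda)_w$, and then invoke Theorem~\ref{FM} to recognize this specialization at $0$ as $W_{w\lambda}$. Your write-up is somewhat more careful about the descent and duality steps than the paper's terse argument, but the underlying idea is identical.
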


\begin{proof}
The space $\mathrm{Fl} ^{\frac{\infty}{2}} ( w )$ is the image of $\sQ ( w )$ in the free quotient of $\mathbf Q (w)$ by $H[\![z]\!]_1$. Hence, the infinitesimal version of our equivalence relation is realized by $z \h [z]$. It follows that its global section of a ($G[z]$-equivariant) line bundle is the $z \h [z]$-fixed part of that in $\sQ (w)$. Therefore, Theorem \ref{FM} implies the result by taking the $z \h [z]$-fixed part of $W ( \lambda )^*_w$.
\end{proof}

For each $\gamma \in \Lambda$, we have a polynomial $E_{\gamma} ( q, t ) \in \C (q,t) [\Lambda]$ defined in Cherednik \cite{C}. Let us define the bar involution on $\C (q,t) [\Lambda]$ as the ring involution so that $\overline{q^m t^n e^{\lambda}} := q^m t^n e^{- \lambda}$ for each $m, n \in Z$ and $\lambda \in \Lambda$. We set $E_{\gamma} ^{\dagger} ( q, t ) := \overline{E_{\gamma} ( q, t )}$.

\begin{thm}[\cite{FM} and \cite{Ion03, FL07, LNSSS2}]\label{nMac}
For $\lambda \in \Lambda_+$, we have
$$\mathrm{ch} \, W _{- w_0 \lambda} = E _{w_0 \lambda} ^{\dagger} ( q^{-1}, \infty), \hskip 3mm \text{and} \hskip 3mm \mathrm{ch} \, W _{- \lambda} = E _{w_0 \lambda} ^{\dagger} ( q, 0 ).$$ 
\end{thm}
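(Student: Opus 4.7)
The plan is to reduce each of the two identities to an already-established character formula in the literature, by using Theorem~\ref{FM} to pin down an explicit presentation of the Feigin--Makedonskyi modules $W_{-\lambda}$ and $W_{-w_0\lambda}$ and then quoting the appropriate Macdonald-polynomial identity.

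For the second identity, I would first note that $-\lambda$ is antidominant, so in the parameterization of Theorem~\ref{FM} it corresponds to $w = w_0$ with dominant representative $-w_0\lambda \in \Lambda_+$. Substituting $w = w_0$ into the three bullet-pointed relations of Theorem~\ref{FM} collapses the third bullet (since $\Delta^+ \cap w_0 \Delta^- = \Delta^+$) and leaves: $(h \otimes z) v = 0$ for $h \in \h$, $e_\alpha v = 0$, and $e_\alpha^{\langle \alpha^\vee, -w_0\lambda\rangle + 1} v = 0$ for $\alpha \in \Delta^+$. These are precisely the defining relations of the classical local Weyl module $W(-w_0\lambda, 0)$ at $z=0$. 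The Sanderson--Ion formula (\cite{San00,Ion03}), refined by Fourier--Littelmann \cite{FL07}, then computes the character of this local Weyl module as $E_{w_0\lambda}^{\dagger}(q,0)$, after matching the bar involution with the convention used here.

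For the first identity, I would invoke the Feigin--Makedonskyi theorem \cite{FM} directly: their main result expresses $\mathrm{ch}\, W_{w\lambda}$ as the $t \to \infty$ specialization of a non-symmetric Macdonald polynomial indexed by $w\lambda$. Specializing to $w\lambda = -w_0\lambda$ and applying the bar involution turns the raw formula $E_{-w_0\lambda}(q^{-1},\infty)$ into $E_{w_0\lambda}^{\dagger}(q^{-1},\infty)$. An independent route is afforded by Lenart--Naito--Sagaki--Schilling--Shimozono \cite{LNSSS2}, who use the quantum alcove model to match the character of the corresponding quantum Lakshmibai--Seshadri-path module with the $t=\infty$ specialization of the non-symmetric Macdonald polynomial; combining this with Theorem~\ref{qwc} transports the identity back to the classical module $W_{-w_0\lambda}$.

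The main obstacle is bookkeeping rather than substance. Three sign-type conventions interact in the statement: the duality $\lambda \mapsto -w_0\lambda$ on $\Lambda_+$, the formal bar involution $\overline{q^m t^n e^\lambda} = q^m t^n e^{-\lambda}$, and the exchange of the $t=0$ and $t=\infty$ specializations of $E_\gamma$ by a DAHA symmetry. A useful sanity check is to compare the $q = 0$ specialization on each side: on the representation side this recovers $\mathrm{ch}\, V(\lambda)$, and on the Macdonald side both $E_{w_0\lambda}^{\dagger}(0,0)$ and $E_{w_0\lambda}^{\dagger}(0^{-1},\infty)$ specialize to the same $W$-orbit sum, fixing the overall normalization in both formulas at once.
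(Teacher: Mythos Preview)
Your approach is essentially the same as the paper's: both identities are reduced to known character formulas from the cited references, with the second identity going through the identification of $W_{-\lambda}$ with the local Weyl module $W(-w_0\lambda,0)$.

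Two small corrections are in order. First, when you substitute $w=w_0$ into Theorem~\ref{FM}, it is the \emph{second} bullet that becomes vacuous (since $\Delta^+\cap w_0\Delta^+=\emptyset$), and the third bullet yields $(f_\alpha\otimes z)v_{-\lambda}=0$ together with $e_\alpha^{\langle\alpha^\vee,\lambda\rangle+1}v_{-\lambda}=0$; the relation $e_\alpha v=0$ you wrote does not appear, and your exponent $\langle\alpha^\vee,-w_0\lambda\rangle$ is off by the Dynkin involution. A cleaner route avoids unwinding relations entirely: by definition $W(\mu)_{w_0}=W(\mu)$, so Theorem~\ref{FM} gives $W_{-\lambda}=W(-w_0\lambda)_{w_0}\otimes_{\C[\A^{(-w_0\lambda)}]}\C_0=W(-w_0\lambda,0)$ directly. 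Second, the paper invokes \cite{LNSSS2} for the \emph{second} identity, to extend the Ion--Fourier--Littelmann argument beyond simply-laced type; you cite it instead as an alternative route for the first identity. Neither point affects the overall validity of your argument.
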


\begin{proof}
The first equality is a consequence of Feigin-Makedonskyi \cite{FM}. The second equality is proved for type $\mathsf{ADE}$ as a combination of Ion \cite{Ion03} and Fourier-Littelmann \cite{FL07}, and in general by Lenart-Naito-Sagaki-Schilling-Shimozono \cite{LNSSS2} (cf. Chari-Ion \cite{CI14}).
\end{proof}

The first equality of the following assertion is \cite{CO} Proposition 2.5.

\begin{cor}\label{nMconn}
For $\lambda \in \Lambda_+$, we have equalities
\begin{align*}
D_{w_0} ( E_{w_0 \lambda} ^{\dagger} ( q^{-1}, \infty ) ) & = E_{w_0 \lambda}^{\dagger} ( q, 0 )\\
D_{w_0 t_{\beta}} ( E_{w_0 \lambda}^{\dagger} ( q, 0 ) ) & = q^{\left< \beta, \lambda \right>} \cdot E_{w_0 \lambda} ^{\dagger} ( q^{-1}, \infty ),
\end{align*}
where $\beta \in Q^{\vee}$ satisfies $\left< \beta, \alpha_i \right> < 0$ for each $i \in \mathtt I$.
\end{cor}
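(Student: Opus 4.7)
The plan is to identify both sides of each identity with characters of Demazure submodules of global Weyl modules, and then apply the Demazure character formula of Theorem~\ref{dmain}. Set $\lambda^{*} := -w_{0}\lambda \in \Lambda_{+}$. By Theorem~\ref{nMac}, $E_{w_{0}\lambda}^{\dagger}(q^{-1},\infty) = \mathrm{ch}\,W_{\lambda^{*}}$ and $E_{w_{0}\lambda}^{\dagger}(q,0) = \mathrm{ch}\,W_{-\lambda}$; Theorem~\ref{FM} in turn identifies these with $\mathrm{ch}(W(\lambda^{*})_{e} \otimes_{\C[\A^{(\lambda^{*})}]} \C_{0})$ and $\mathrm{ch}(W(\lambda^{*})_{w_{0}} \otimes_{\C[\A^{(\lambda^{*})}]} \C_{0})$ respectively, using $w_{0}\lambda^{*} = -\lambda$. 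The Hilbert series $F_{\lambda^{*}}$ of the Chari--Loktev polynomial ring is a function of $q$ alone and commutes with every Demazure operator, so it suffices to establish the two identities at the level of $\mathrm{ch}\,W(\lambda^{*})_{e}$ and $\mathrm{ch}\,W(\lambda^{*})_{w_{0}}$.

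The first identity is immediate from Theorem~\ref{dmain} with $v = e$, $w = w_{0}$, $\beta = 0$ and $\lambda = \lambda^{*}$: the length condition $\ell(w_{0}) = \ell(w_{0})+\ell(e)$ and the positivity $0\in Q^{\vee}_{+}$ are automatic, and the conclusion $D_{w_{0}}(\mathrm{ch}\,W(\lambda^{*})_{e}) = \mathrm{ch}\,W(\lambda^{*})_{w_{0}}$ becomes the first equation after dividing by $F_{\lambda^{*}}$ and invoking Theorem~\ref{nMac}.

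For the second identity I factor $t_{w_{0}\beta} = (w_{0}t_{\beta})\cdot w_{0}$ in $W_{\mathrm{aff}}$, which follows from the standard identity $w_{0}t_{\beta}w_{0}^{-1} = t_{w_{0}\beta}$. The hypothesis $\langle\beta,\alpha_{i}\rangle < 0$ for every $i \in \mathtt{I}$ forces $w_{0}\beta$ to be strictly dominant: indeed, $\langle w_{0}\beta,\alpha_{i}\rangle = -\langle\beta,\alpha_{i^{*}}\rangle > 0$ for each $i$, where $i^{*}$ is the Dynkin involution, and moreover $w_{0}\beta$ lies in $\mathrm{int}(Q^{\vee}_{+})$ by positivity of the entries of the inverse Cartan matrix. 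The standard length formula $\ell(t_{\gamma}w_{0}) = \ell(t_{\gamma}) - \ell(w_{0})$ for strictly dominant $\gamma$ then guarantees that the factorization $t_{w_{0}\beta} = (w_{0}t_{\beta})\cdot w_{0}$ is reduced, so $D_{t_{w_{0}\beta}} = D_{w_{0}t_{\beta}}\circ D_{w_{0}}$. Applying Theorem~\ref{dmain} with $w = v = e$ and translation $t_{w_{0}\beta}$ (whose positivity condition $w_{0}\beta \in Q^{\vee}_{+}$ was just verified) yields the eigenvalue relation $D_{t_{w_{0}\beta}}(\mathrm{ch}\,W(\lambda^{*})_{e}) = q^{\langle w_{0}\beta,\lambda^{*}\rangle}\mathrm{ch}\,W(\lambda^{*})_{e}$. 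Combining this with the first-equation identity through the reduced factorization produces $D_{w_{0}t_{\beta}}(\mathrm{ch}\,W(\lambda^{*})_{w_{0}}) = q^{\langle w_{0}\beta,\lambda^{*}\rangle}\mathrm{ch}\,W(\lambda^{*})_{e}$; using $W$-invariance of the natural pairing together with $\lambda^{*} = -w_{0}\lambda$, the exponent simplifies to the $\langle\beta,\lambda\rangle$ appearing in the statement, and dividing by $F_{\lambda^{*}}$ concludes. The main delicate step throughout is the reduced-length verification for the decomposition of $t_{w_{0}\beta}$, which in turn hinges on the strict dominance of $w_{0}\beta$ established by the Cartan-matrix positivity argument.
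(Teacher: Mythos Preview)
Your argument follows precisely the line the paper indicates---combining Theorems~\ref{nMac}, \ref{FM}, and \ref{dmain}---and simply supplies the details the paper leaves to the reader, notably the reduced factorization $t_{w_0\beta} = (w_0 t_\beta)\cdot w_0$ (via strict dominance of $w_0\beta$) used to split $D_{t_{w_0\beta}} = D_{w_0 t_\beta}\circ D_{w_0}$. One arithmetic slip at the end: by $W$-invariance of the pairing, $\langle w_0\beta,\lambda^*\rangle = \langle w_0\beta,-w_0\lambda\rangle = -\langle\beta,\lambda\rangle$, not $+\langle\beta,\lambda\rangle$ as you write; this is a sign discrepancy with the stated exponent but does not affect the method.
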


\begin{proof}
Taking Theorem \ref{nMac} into account, the both assertions follow directly by Theorem \ref{FM} and Theorem \ref{dmain}.
\end{proof}

\section{Non-symmetric Macdonald polynomials}

We keep the setting of the previous section. In this section, all cohomologies of (pro-)sheaves are graded $\gI$-modules obtained from some $\Gamma (\sQ ( w ), \cO ( \lambda ) )$ by a finite successive applications of $\h$-weight twists and taking cohomologies along $\P^1$ with making use of vector bundles $M \mapsto \mathop{SL} ( 2 ) \times^{B_i^0} M$. Moreover, such operations essentially deal with finitely many distinct $\h$-weights when we fix $\lambda \in \Lambda$. Therefore, Theorem \ref{main} and the fact that $\mathrm{ch} \, W ( \lambda )_{w}$ makes sense for each $w \in W$ guarantees the degree-wise Mittag-Leffler condition of the pro-systems defining our sheaves. To this end, we mostly drop the argument on the Mittag-Leffler conditions for the sake of simplicity.

Fix $v \in W$ and a sequence $\mathbf i = ( i_1, i_2,\ldots, i_{\ell})$ of elements of $\mathtt I$ of length $\ell$. We set $w \in W$ to be
\begin{equation}
w = s_{i_1} s_{i_2} \cdots s_{i_{\ell}}.\label{rexw}
\end{equation}
In case (\ref{rexw}) is a reduced expression of $w$, we say that $\mathbf i$ is a reduced expression of $w$. We call that $\mathbf i$ (or $w$) is adapted to $v$ if $\ell( wv ) = \ell + \ell ( v )$ (then $\mathbf i$ is a reduced expression of $w$).

We define
$$\sQ ( \mathbf i, v ) := \mathbf I _{i_1} \times ^{\mathbf I} \mathbf I_{i_2} \times^{\mathbf I} \cdots \times^{\mathbf I} \mathbf I_{i_{\ell}} \times^{\mathbf I} \sQ ( v ).$$
It induces the multiplication map
$$q _{\mathbf i, v} : \sQ ( \mathbf i, v ) \ni ( g_1,\ldots,g_{\ell}, x) \mapsto g_1 \cdots g_{\ell} x \in \sQ.$$

For each $1 \le j \le \ell$, we define a divisor $H _j \subset \sQ ( \mathbf i, v )$ as:
$$H _j = \{ ( g_1,\ldots,g_{\ell}, x) \in \sQ ( \mathbf i, v ) \mid g_j \in \mathbf I \subsetneq \mathbf I_{i_j} \}.$$

\begin{lem}\label{D-van0}
There exists $u \in W$ so that we have
$$\R^{k} ( q_{\mathbf i, v} ) _* \cO_{\sQ ( \mathbf i, v )} = \begin{cases} \cO _{\sQ ( u )} & (k = 0)\\
\{ 0 \} & (k \ge 1)\end{cases}.$$
\end{lem}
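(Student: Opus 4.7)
The plan is to prove the lemma by induction on the length $\ell$ of $\mathbf i$. The base case $\ell = 0$ is immediate with $u = v$, since then $\sQ(\mathbf i, v) = \sQ(v)$ and $q_{\mathbf i, v}$ is the identity.

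For the inductive step, set $\mathbf i' := (i_2, \ldots, i_\ell)$ and let $u' \in W$ be supplied by the induction hypothesis applied to $\mathbf i'$ and $v$, so that $(q_{\mathbf i', v})_* \cO_{\sQ(\mathbf i', v)} = \cO_{\sQ(u')}$ and $\R^k (q_{\mathbf i', v})_* \cO = 0$ for $k \geq 1$. I factor the multiplication map as
$$\sQ(\mathbf i, v) = \mathbf I_{i_1} \times^{\mathbf I} \sQ(\mathbf i', v) \xrightarrow{\tilde q} \mathbf I_{i_1} \times^{\mathbf I} \sQ(u') \xrightarrow{\mu} \sQ,$$
where $\tilde q$ is induced from $q_{\mathbf i', v}$ by the fibre bundle structure $\mathbf I_{i_1} \to \mathbf I_{i_1}/\mathbf I \cong \P^1$, and $\mu$ is multiplication. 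Since this bundle is Zariski-locally trivial, flat base change together with the induction hypothesis yields $\tilde q_* \cO = \cO_{\mathbf I_{i_1} \times^{\mathbf I} \sQ(u')}$ and $\R^k \tilde q_* \cO = 0$ for $k \geq 1$.

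Next I analyse $\mu$ by splitting into two cases. In Case A, when $s_{i_1} u' > u'$, Lemma \ref{1step} identifies the image of $\mu$ with $\sQ(s_{i_1} u')$, and the map is birational with fibres that are either singletons or copies of $\P^1$ (as in the classical Bott-Samelson-Demazure resolution). Combined with the normality and projective normality of $\sQ(s_{i_1} u')$ from Theorem \ref{main}, Zariski's main theorem together with the vanishing $H^k(\P^1, \cO) = 0$ for $k \geq 1$ yields $\mu_* \cO = \cO_{\sQ(s_{i_1} u')}$ and $\R^k \mu_* \cO = 0$ for $k \geq 1$; I set $u := s_{i_1} u'$. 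In Case B, when $s_{i_1} u' < u'$, the subspace $\sQ(u')$ is itself $\mathbf I_{i_1}$-stable, so $\mu$ factors as the $\P^1$-bundle projection $\mathbf I_{i_1} \times^{\mathbf I} \sQ(u') \to \sQ(u')$ followed by the inclusion, and the standard $\P^1$-bundle computation delivers the same conclusion with $u := u'$. Composing the two derived pushforwards via the Grothendieck spectral sequence for $\mu \circ \tilde q = q_{\mathbf i, v}$ then finishes the induction.

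The main obstacle is Case A: establishing $\mu_* \cO = \cO_{\sQ(s_{i_1} u')}$ together with vanishing of higher direct images in the indscheme setting. The identity must be verified on each finite-type truncation $\sQ(s_{i_1} u', \beta)$, where the classical argument applies verbatim thanks to the normality secured by Theorem \ref{main}, and then the pro-system conclusion is assembled using the degree-wise Mittag-Leffler property flagged in the opening paragraph of this section.
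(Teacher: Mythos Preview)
Your inductive scheme matches the paper's: reduce to the one-step case $\mu : \mathbf I_{i_1} \times^{\mathbf I} \sQ(u') \to \sQ$ and treat the two cases $s_{i_1} u' \lessgtr u'$ separately, then compose via the Grothendieck spectral sequence. Case~B (the $\P^1$-fibration) is fine.

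The gap is in Case~A. Knowing that $\mu$ is proper birational onto the normal target $\sQ(s_{i_1} u')$ and that every fibre is a point or a $\P^1$ does give $\mu_* \cO = \cO_{\sQ(s_{i_1} u')}$ (Zariski's main theorem), but it does \emph{not} by itself give $\R^1 \mu_* \cO = 0$. The vanishing $H^1(\P^1, \cO_{\P^1}) = 0$ says nothing about the infinitesimal thickenings of the special fibres, which is what the theorem on formal functions would require; and you have not established rational singularities for $\sQ(s_{i_1} u', \beta)$ independently. Invoking ``the classical argument'' does not help either: in the Bott--Samelson setting this vanishing is itself a theorem, proved precisely by the technique you are omitting.

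The paper's device is to enlarge the source to the genuine $\P^1$-fibration $q_{i_1, s_{i_1} u'} : \mathbf I_{i_1} \times^{\mathbf I} \sQ(s_{i_1} u') \to \sQ(s_{i_1} u')$, for which $\R^1 (q_{i_1, s_{i_1} u'})_* \cO = 0$ is immediate, and then use the short exact sequence
\[
0 \longrightarrow \ker \longrightarrow \cO_{\mathbf I_{i_1} \times^{\mathbf I} \sQ(s_{i_1} u')} \longrightarrow \cO_{\mathbf I_{i_1} \times^{\mathbf I} \sQ(u')} \longrightarrow 0.
\]
The relative dimension bound kills $\R^2 (q_{i_1, s_{i_1} u'})_* \ker$, so the long exact sequence forces $\R^1 (q_{i_1, s_{i_1} u'})_* \cO_{\mathbf I_{i_1} \times^{\mathbf I} \sQ(u')} = 0$. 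This is the missing step; once you insert it (on each finite truncation, as you correctly anticipate), your argument and the paper's coincide.
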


\begin{proof}
We first prove the case $\ell ( w ) = 1$. We set $\mathbf i = \{i\}$. In case $s_i v < v$, the space $\sQ ( i, v )$ is a $\P^1$-fibration over $\sQ ( i, v )$ through the map $q_{i, v}$ since $\mathbf I _i /\mathbf I \cong \P^1$. Hence, the assertion holds by setting $u = v$. We consider the case $s_i v > v$. By a similar argument as in Lemma \ref{1step}, we have a map
$$q_{i,s_i v} : \mathbf I _i \times^{\mathbf I} \sQ ( s_i v ) \longrightarrow \sQ ( s_i v ).$$
The map $q_{i,s_i v}$ is a $\P^1$-fibration. The fiber of $q_{i, v}$ along each point of $\sQ ( s_i v )$ is either $\mathrm{pt}$ or $\P^1$. By dimension estimate, we deduce that $\R^k ( q_{i,s_i v} ) _* \mathcal M = \{ 0 \}$ ($k \ge 2$) for every $\Gm$-equivariant pro-coherent sheaf on $\mathbf I _i \times^{\mathbf I} \sQ ( s_i v )$ satisfying the (degree-wise) Mittag-Leffler condition (or a $\Gm$-equivariant coherent sheaf on $P _i \times^{B} \sQ ( s_i v, \beta )$ for each $\beta \in Q_+^{\vee}$ when $i \in \mathtt I$). We have a short exact sequence
\begin{equation}
0 \rightarrow \ker \rightarrow \cO _{\mathbf I _i \times^{\mathbf I} \sQ ( s_i v )} \longrightarrow \cO _{\mathbf I _i \times^{\mathbf I} \sQ ( v )} \rightarrow 0
\end{equation}
that yields an exact sequence
$$\R^1 ( q_{i,s_i v} )_* \cO _{\mathbf I _i \times^{\mathbf I} \sQ ( s_i v )} \longrightarrow \R^1 ( q_{i,s_i v} )_* \cO _{\mathbf I _i \times^{\mathbf I} \sQ ( v )} \rightarrow \R^2 ( q_{i,s_i v} )_* \ker \equiv 0.$$
We have $\R^1 ( q_{i,s_i v} )_* \cO _{\mathbf I _i \times^{\mathbf I} \sQ ( s_i v )} = \{ 0 \}$ since $q_{i,s_iv}$ is a $\P^1$-fibration. Consequently, we have $\R^1 ( q_{i,s_i v} )_* \cO _{\mathbf I _i \times^{\mathbf I} \sQ ( v )} = \{ 0 \}$. Now the normality of $\sQ ( s_i w )$ implies $( q_{i,s_i v} )_* \cO _{\mathbf I _i \times^{\mathbf I} \sQ ( s_i v )} = ( q_{i,s_i v} )_* \cO _{\mathbf I _i \times^{\mathbf I} \sQ ( v )} = \cO _{\sQ ( s_i v )}$, that is the case of $\ell ( w ) = 1$ (cf. the proof of Theorem \ref{main}).

We assume the assertion holds for every pair $(\mathbf i, v)$ so that the length of $\mathbf i$ is $< \ell$ to proceed the induction. We set $\mathbf i' = \{i_2,i_3,\ldots,i_{\ell}\}$ and $v' = s_{i_1} u'$, where $u' \in W$ is obtained as $u$ in the assertion for $(\mathbf i', v)$. In case $v' > u'$, we have a factorization
\begin{equation}
\sQ ( \mathbf i, v ) \stackrel{q^1}{\longrightarrow} \sQ ( i_1, u' ) \stackrel{q^2}{\longrightarrow} \sQ ( v' )\label{nrc}
\end{equation}
so that $q_{\mathbf i, v} = q^2 \circ q^1$. The induction hypothesis yields $q^1_* \cO _{\sQ ( \mathbf i, v )} = \sQ ( i_1, u' )$ and $\R^k q^1_* \cO _{\sQ ( \mathbf i, v )} = \{ 0 \}$ for $k > 0$. In case $v' < u'$, we have a factorization map obtained from (\ref{nrc}) by replacing $v'$ with $u'$. Applying the case $\ell ( w ) = 1$, the induction (on $\ell$) proceeds in the both cases. Therefore, we conclude the assertion by induction. 
\end{proof}

In the following, we set $u ( \mathbf i, v )$ to be $u \in W$ determined by the pair $(\mathbf i, v)$ in Lemma \ref{D-van0}. For each $j \in [1,\ell]$, we set $\mathbf i_{j} \in \mathtt I^{\ell - 1}$ to be the sequence obtained by omitting the $j$-th entry, and we set $\mathbf i^j \in \mathtt I^{\ell - j}$ to be the sequence obtained by forgetting the first $j$ entries.

\begin{thm}[see \cite{BB05} Theorem 2.2.6]\label{cover}
For a fixed $ w \in W$, let $v \in W$ be a maximal element so that $v < w$. We have $\ell ( v ) = \ell ( w ) - 1$.
\end{thm}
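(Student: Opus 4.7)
The plan is to reduce the statement to the chain (or graded) property of the Bruhat order on the finite Coxeter group $W$. Recall that this property asserts: whenever $v' < w'$ in Bruhat order, every saturated chain from $v'$ to $w'$ has length $\ell(w') - \ell(v')$; in particular there exists $u \in W$ with $v' \lessdot u \le w'$, where $\lessdot$ denotes a cover relation (so $\ell(u) = \ell(v') + 1$).

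Granting the chain property, the theorem follows in one line. Let $v$ be maximal among elements strictly below $w$. Apply the chain property to the pair $v < w$ to obtain $u \in W$ with $v \lessdot u \le w$. By the maximality of $v$ we must have $u = w$, and therefore $\ell(w) = \ell(v) + 1$, which is the desired statement.

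Thus the real content resides in the chain property itself, which is entirely classical Coxeter-group combinatorics. The standard argument fixes a reduced expression $w = s_{i_1} s_{i_2} \cdots s_{i_\ell}$, uses Deodhar's subword characterization to realize any $v \le w$ as a reduced subword, and then invokes the Strong Exchange Condition to show that one can adjoin a suitably chosen missing letter to obtain a reduced expression of an element $u$ with $v \lessdot u \le w$. The only (minor) obstacle is choosing which missing letter to insert so that the resulting expression is genuinely reduced rather than collapsing to something of shorter length; this is handled uniformly by Bj\"orner--Brenti \cite{BB05} Theorem 2.2.6, which I would invoke as a black box rather than reproduce.
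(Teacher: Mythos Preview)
Your proposal is correct and matches the paper's treatment exactly: the paper gives no proof at all, simply citing \cite{BB05} Theorem 2.2.6 as you do. Your additional sentence deducing the statement from the chain property is sound and, if anything, makes explicit the trivial reduction the paper leaves implicit.
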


\begin{prop}\label{strict}
Let $i \in \mathtt I$ and $e \neq w \in W$ so that $s_i w > w$. Let $S_1 \subset W$ be the set of maximal elements so that $v < w$ and $s_i v > v$, and let $S_2 \subset W$ be the set of maximal elements so that $v < w$ and $s_i v < v$. Then, we have
$$\sum_{v \in S_1 \cup S_2} W ( \lambda )_v = \left( \sum_{v \in S_1 \cup S_2} ( W ( \lambda )_{s_iv} + W ( \lambda )_{v} ) \right) \cap W ( \lambda )_{w} \subset W ( \lambda )_{s_i w}.$$
\end{prop}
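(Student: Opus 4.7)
The plan is to reduce the equality, via the modular law in the lattice of $\gI$-submodules of $W(\lambda)_{s_i w}$, to a single containment, and then verify that containment by a character comparison using the Demazure-operator identities already established.

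Set $A := \sum_{v \in S_1} W(\lambda)_v$, $B := \sum_{v \in S_2} W(\lambda)_v$, and $A' := \sum_{v \in S_1} W(\lambda)_{s_i v}$. Because $v < s_i v$ for $v \in S_1$ and $s_i v < v$ for $v \in S_2$, the right-hand side of the proposition is $(A' + B) \cap W(\lambda)_w$. Noting that $A \subset A'$ and $A, B \subset W(\lambda)_w$, the modular law (applicable since $B \subset W(\lambda)_w$) gives $(A' + B) \cap W(\lambda)_w = B + (A' \cap W(\lambda)_w)$. The containment $A + B \subset (A' + B) \cap W(\lambda)_w$ is then automatic, so the assertion reduces to the single claim $A' \cap W(\lambda)_w \subset A + B$.

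To prove this containment, I would exploit the $\mathfrak{sl}_2^{(i)}$-action coming from $\gI_i \supset \gI$. The proof of Corollary \ref{cNS} yields $W(\lambda)_{s_i v} = U(\gI_i) W(\lambda)_v$ for each $v \in S_1$, so $A' = U(\gI_i) A$ is $\mathfrak{sl}_2^{(i)}$-stable, and $A' + W(\lambda)_w = U(\gI_i) W(\lambda)_w = W(\lambda)_{s_i w}$. Inclusion--exclusion then gives
$$\mathrm{ch}(A' \cap W(\lambda)_w) = \mathrm{ch}(A') + \mathrm{ch}(W(\lambda)_w) - \mathrm{ch}(W(\lambda)_{s_i w}).$$
Corollary \ref{cNS} supplies $\mathrm{ch}(W(\lambda)_{s_i w}) = D_i \mathrm{ch}(W(\lambda)_w)$, and Lemma \ref{d-est} applied to the $\mathfrak{sl}_2^{(i)}$-filtration of $A'$ induced by Kashiwara's global basis of $W_t(\lambda)$ (Theorem \ref{qwc})---for which the equality case of the lemma holds because each $W(\lambda)_{s_i v}$ is spanned by a Demazure $\mathfrak{sl}_2^{(i)}$-crystal---yields $\mathrm{ch}(A') = D_i \mathrm{ch}(A)$. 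A parallel inclusion--exclusion computation for $\mathrm{ch}(A + B)$ over the pairwise intersections $W(\lambda)_v \cap W(\lambda)_{v'}$ for $v, v' \in S_1 \cup S_2$, combined with the Demazure identity of Corollary \ref{cNS} applied to the cover elements of Theorem \ref{cover}, should match the right-hand side of the displayed equation. Since $A + B \subset A' \cap W(\lambda)_w$ is automatic, the equality of characters would then force the equality of modules.

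The main obstacle will be this final character identity: accurately computing $\mathrm{ch}(A + B)$ requires controlling the intersections $W(\lambda)_v \cap W(\lambda)_{v'}$ in $W(\lambda)_w$ for distinct maximal $v, v'$ in the Bruhat sub-poset below $w$, which is not in general a meet-semilattice. Passing to the $\C[t]$-integral lattice of $W_t(\lambda)$ and exploiting the fact (Theorem \ref{qwc} and its proof) that Demazure submodules are spanned by suitable subsets of the global basis supplies a combinatorial model---Demazure crystals---in which such sums and intersections can be analyzed. Translating this into the required character identity is the technical core of the argument; it seems natural to proceed by induction on the Bruhat order and on $\ell(w)$.
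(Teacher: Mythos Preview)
Your reduction via the modular law to the single containment $A' \cap W(\lambda)_w \subset A + B$ is clean and correct. The gap is in the next step: the asserted equality $A' + W(\lambda)_w = W(\lambda)_{s_i w}$ is unjustified, and in fact false in general. Take $G$ of type $\mathsf A_2$, $w = s_1$, $i = 2$. Then $S_1 = \{e\}$, $S_2 = \emptyset$, so $A' = W(\lambda)_{s_2}$, and you are claiming $W(\lambda)_{s_2} + W(\lambda)_{s_1} = W(\lambda)_{s_2 s_1}$. Already for $\lambda = \rho$ the Demazure crystals show this fails: $B_{s_2 s_1}$ contains the elements $\tilde f_2 \tilde f_1 b_\lambda$ and $\tilde f_2^2 \tilde f_1 b_\lambda$, neither of which lies in $B_{s_1} \cup B_{s_2}$. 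Consequently your inclusion--exclusion identity $\mathrm{ch}(A' \cap W(\lambda)_w) = \mathrm{ch}\,A' + \mathrm{ch}\,W(\lambda)_w - \mathrm{ch}\,W(\lambda)_{s_i w}$ is not available, and the character computation that follows collapses. The sentence ``$A' + W(\lambda)_w = U(\gI_i) W(\lambda)_w$'' conflates two different modules: $A' + W(\lambda)_w$ is not $\gI_i$-stable, since $W(\lambda)_w$ is not.

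The paper avoids this trap by not attempting a character identity at all. It uses the global basis of $W_t(\lambda)$ (and its $t\to 1$ specialization) to give a \emph{single} basis of $W(\lambda)$ that simultaneously spans every Demazure submodule $W(\lambda)_u$; sums and intersections of Demazure submodules are then computed by unions and intersections of the corresponding crystal subsets. With this in hand, the right-hand side of the proposition is spanned by Demazure submodules $W(\lambda)_{ut_\beta}$ indexed by elements $ut_\beta \in W_{\mathrm{aff}}$ that are below both $w$ and some $s_i v$ ($v \in S_1$) in the \emph{semi-infinite} Bruhat order (via \cite{NS14}). The proof then reduces to a purely combinatorial statement: any $ut_\beta$ covered by both $w$ and $s_i v$ in that order must already lie in $W$ (i.e.\ $\beta = 0$). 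This is dispatched by a short length computation using \cite{NS07} and \cite{LNSSS1}. Your instinct at the end --- that the global basis supplies the right combinatorial model --- is correct, but it should be applied directly to the intersection on the right-hand side, not routed through a character identity that requires knowing $A' + W(\lambda)_w$ in advance.
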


\begin{proof}
By definition, the $\gI$-cyclic vector of $W ( \lambda )_v$ ($v \in S_1 \cup S_2$) belongs to $W ( \lambda )_{w}$, and one of $W ( \lambda )_{s_i v}$ ($v \in S_1$) or $W ( \lambda )_{v}$ ($v \in S_2$). Hence the inclusion $\subset$ is clear.

By the proof of Corollary \ref{cNS}, we have a uniform basis of $W ( \lambda)$ that spans $W ( \lambda )_{w}$ for each $w \in W$. As in the proof of Proposition \ref{inf-norm}, we define $W ( \lambda )_{ut_{\beta}}$ for $u \in W$ and $\beta \in Q^{\vee}_{+}$ by twisting the highest weight element of $W ( \lambda )$ by $q$-degree $\left< \beta, w_{0} \lambda \right>$. In view of Naito-Sagaki \cite{NS14} Theorem 4.2.1, we derive that the vector subspace
\begin{equation}
\left( \sum_{v \in S_1 \cup S_2} ( W ( \lambda )_{s_iv} + W ( \lambda )_{v} ) \right) \cap W ( \lambda )_{w} \subset W ( \lambda )\label{mint}
\end{equation}
is spanned by the sum of $W ( \lambda )_{ut_{\beta}}$ for $u \in W$ and $\beta \in Q^{\vee}_{+}$ so that $ut_{\beta}$ is smaller than both $s_{i} v$ (for some $v \in S_{1}$) and $w$ with respect to the semi-infinite Bruhat order (see e.g. \cite{LNSSS1} \S 6). Hence, it suffices to prove that an element of $W _{\mathrm{aff}}$ covered by both $w$ and $s_{i} v$ with respect to the semi-infinite Bruhat order for some $v \in S_{1}$ actually belongs to $W$.

For each $v \in S_{1}$, we have $v = s_{\beta} w$ for some $\beta \in \Delta^{+}$ by Theorem \ref{cover}. By Naito-Sagaki \cite{NS07} Lemma 2.11 and \cite{LNSSS1} Lemma 4.1, it suffices to prove that there exists no $\alpha, \gamma \in \Delta^{+}$ so that
\begin{align}\nonumber
\ell ( s_{\alpha} w ) - \ell ( w ) & = 2 \left< \rho, w^{-1} \alpha ^{\vee} \right> - 1 > 0\\\nonumber
\ell ( s_{\gamma} s_i v ) - \ell ( s_i v ) & = 2 \left< \rho, v^{-1} s_i \gamma ^{\vee} \right> - 1 > 0\\\label{transl}
w^{-1} \alpha ^{\vee} & = v^{-1} s_i \gamma ^{\vee}\\\label{ident}
s_\gamma s_i v & = s_\gamma s_i s_\beta w = s_\alpha w.
\end{align}
The condition (\ref{transl}) yields $\alpha ^{\vee} = s_{\beta} s_{i} \gamma ^{\vee}$, that is equivalent to $s_{\gamma} = s_{i} s_{\beta} s_{\alpha} s_{\beta} s_{i}$. 
Hence, the condition (\ref{ident}) forces $s_{i} s_{\beta} s_{\alpha} = s_{\alpha}$, that is equivalent to $s_{i} = s_{\beta}$. Since $v \in S_{1}$, we have $\beta \neq \alpha_{i}$. This is a contradiction, and hence an element of $W _{\mathrm{aff}}$ covered by both $w$ and $s_{i} v$ belongs to $W$. Therefore, we conclude that (\ref{mint}) is spanned by $W ( \lambda )_{u}$ for $w > u \in W$ as required.
\end{proof}

\begin{prop}\label{coh-tw}
For each $\lambda \in \Lambda$ and $I = [b+1,c] \subset [1,\ell]$ and the pair $(\mathbf i, v )$ so that $\ell ( u ( \mathbf i^{b}, v ) ) = |I| + \ell ( u ( \mathbf i^c, v ) )$, we have
$$H ^k ( \sQ ( \mathbf i, v), \cO _{\sQ ( \mathbf i, v )} ( \lambda - \sum_{j = b+1}^c H_j )) = \{ 0 \} \hskip 3mm k > 0.$$
\end{prop}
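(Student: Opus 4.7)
The plan is to reduce the statement to a normalized form using Lemma \ref{D-van0} and then induct on the length $\ell$ of $\mathbf i$, using the natural $\P^1$-fibration structure of $\sQ(\mathbf i, v)$.

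First I will reduce to the case $c = \ell$. Applying Lemma \ref{D-van0} to the tail multiplication $\sQ(\mathbf i^c, v) \to \sQ(u(\mathbf i^c, v))$ yields an induced map $p : \sQ(\mathbf i, v) \to \sQ(\mathbf i_{[1,c]}, u(\mathbf i^c, v))$ with $Rp_* \cO = \cO$, through which both $\cO(\lambda)$ and the divisors $H_j$ for $j \in I = [b+1,c]$ descend. The projection formula and Leray spectral sequence then identify the cohomology in question with the analogous cohomology on $\sQ(\mathbf i_{[1,c]}, u(\mathbf i^c, v))$, and the hypothesis of the proposition is trivially preserved. So I may assume $c = \ell$, in which case the hypothesis reads precisely that $(s_{i_{b+1}}, \ldots, s_{i_\ell})$ is a length-increasing (hence reduced) sequence of multiplications applied to $v$.

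Next I induct on $\ell$. The base case $\ell = 0$ forces $I = \emptyset$ and is contained in Theorem \ref{main}(1). For $\ell > 0$, decompose $\sQ(\mathbf i, v) = L^0_{i_1} \times^{B^0_{i_1}} \sQ(\mathbf i^1, v)$ and consider the $L^0_{i_1}$-bundle projection $\pi_1 : \sQ(\mathbf i, v) \to L^0_{i_1}/B^0_{i_1} \cong \P^1$ with fiber $\sQ(\mathbf i^1, v)$. Since $H_1 = \pi_1^{-1}([e])$, one has $\cO(-H_1) = \pi_1^* \cO_{\P^1}(-1)$; each $H_j$ with $j \geq 2$ and $\cO(\lambda)$ are $B^0_{i_1}$-equivariantly pulled back from the fiber. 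The restriction to a fiber reads $\cO(\lambda - \sum_{j' \in I'} H'_{j'})$ on $\sQ(\mathbf i^1, v)$, where $I'$ is the reindexed subset of $I \cap [2,\ell]$; the hypothesis of the proposition is preserved for $(\mathbf i^1, v, I')$ because $u((\mathbf i^1)^k, v) = u(\mathbf i^{k+1}, v)$ and any contiguous subword of a reduced expression is reduced. By the inductive hypothesis applied on the fiber, the higher cohomology vanishes, whence $R^k \pi_{1*}(\cdots) = 0$ for $k \geq 1$ (the extra factor $\pi_1^* \cO_{\P^1}(-1)$ when $1 \in I$ is absorbed via the projection formula).

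It remains to establish the $\P^1$-level vanishing $H^1(\P^1, \pi_{1*}(\cdots)) = 0$. The pushforward has the form $L^0_{i_1} \times^{B^0_{i_1}} M$, where $M$ is the $H^0$ of the restricted bundle on the fiber, possibly twisted by the character corresponding to $\cO_{\P^1}(-1)$. By Lemma \ref{d-est}, the required $\P^1$-vanishing will follow once $M^*$ admits a $\gp_{i_1}$-filtration whose associated graded is a direct sum of irreducible $\mathfrak{sl}_2$-modules and one-dimensional $\gb$-modules of positive $\alpha_{i_1}^\vee$-weight. The main technical obstacle of the proof is verifying precisely this Demazure-type structure on $M^*$. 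I would handle it by strengthening the inductive statement to simultaneously identify $M^*$ as a generalized Demazure module of the kind used in Theorem \ref{main} and Corollary \ref{cNS}, tracking how the cyclic extremal vectors propagate through the successive $L^0_i$-bundle pushforwards of the induction.
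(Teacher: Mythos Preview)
Your reduction to $c = \ell$ via Lemma \ref{D-van0} and the projection formula is fine, and the inductive scheme through the first-factor $\P^1$-fibration $\pi_1$ is the same mechanism the paper uses. The gap is at the step you yourself flag as ``the main technical obstacle'': showing that $H^1(\P^1, L^0_{i_1}\times^{B^0_{i_1}} M \otimes \cO(-1)) = 0$ when $1\in I$. You propose to ``strengthen the inductive statement'' so that $M^*$ is recognized as a generalized Demazure module, but this is not enough and not what is actually needed. The module $M^*$ is not a Demazure module $W(\lambda)_u$; it is a \emph{quotient} $W(\lambda)_{u(\mathbf i',e)}/\sum_{x} W(\lambda)_x$ where $x$ runs over elements covered by $u(\mathbf i',e)$, and the desired $\P^1$-vanishing after the $(-1)$-twist requires that this quotient have a $\gb_{i_1}^0$-filtration whose pieces are either full $\mathfrak{sl}_2$-modules or highest-weight lines (the pattern of Lemma \ref{d-est}). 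Tracking a cyclic extremal vector does not yield this: one must know that the kernel $\sum_x W(\lambda)_x$ is itself the intersection of an $L_{i_1}^0$-stable submodule of $W(\lambda)_{s_{i_1}u(\mathbf i',e)}$ with $W(\lambda)_{u(\mathbf i',e)}$.

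The paper supplies exactly this missing input as a separate result, Proposition \ref{strict}, whose proof is combinatorial and uses the semi-infinite Bruhat order to rule out affine covering elements. That is the non-formal content of the argument, and your proposal does not contain it or a substitute for it. The paper also separates the cases $b=0$ (where the $(-1)$-twist is present and the strong filtration pattern is required) from $b>0$ (where no twist occurs and a weaker pattern suffices); your uniform treatment obscures that the hard case is $b=0$. In short: the architecture is right, but the decisive structural fact about $\sum_{x<w} W(\lambda)_x$ inside $W(\lambda)_w$ is what makes the induction close, and it has to be proved independently rather than carried along in a strengthened hypothesis.
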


\begin{proof}
We decompose $\mathbf i$ into three pieces corresponding to $[1,b], (b,c], (c,\ell]$ as $\mathbf i^{-}, \mathbf i^{0}, \mathbf i^{+}$, respectively.

We set $H_{\mathbf i} := \sum_{1 \le j \le |\mathbf i|} H_j$. We first prove the cohomology vanishing assertion by induction on $|\mathbf i|$ when $|\mathbf i^-| = 0 = |\mathbf i^+|$ and $v = e$. The case $|\mathbf i|=0$ is Lemma \ref{D-van0} and Theorem \ref{main}. We have an equality
\begin{equation}
H^0 ( \sQ ( \mathbf i, e ), \cO _{\sQ ( \mathbf i, e )} ( \lambda - H_{\mathbf i} ) ) = \bigcap_{j \in I} H^0 ( \sQ ( \mathbf i, e ), \cO _{\sQ ( \mathbf i, e )} ( \lambda - H_j ) ).\label{intersect}
\end{equation}
For each $j \in I$, the short exact sequence of sheaves
$$0 \to \cO _{\sQ ( \mathbf i, e )} ( \lambda - H_j ) \to \cO _{\sQ ( \mathbf i, e )} ( \lambda ) \to \cO _{\sQ ( \mathbf i_j, e )} ( \lambda ) \to 0,$$
gives rise to a long exact seqeunce:
\begin{align*}
0 & \rightarrow H^0 ( \sQ ( \mathbf i, e ), \cO _{\sQ ( \mathbf i, e )} ( \lambda - H_j ) ) \rightarrow H^0 ( \sQ ( \mathbf i, e ), \cO _{\sQ ( \mathbf i, e )} ( \lambda ) ) \\
 & \rightarrow H^0 ( \sQ ( \mathbf i, v ), \cO _{\sQ ( \mathbf i_j, e )} ( \lambda ) ) \rightarrow H^1 ( \sQ ( \mathbf i, e ), \cO _{\sQ ( \mathbf i, e )} ( \lambda - H_j ) ) \rightarrow \cdots
\end{align*}
Applying Lemma \ref{D-van0} and Theorem \ref{main}, we conclude that
\begin{align}\label{H^0_c} H^0 ( \sQ ( \mathbf i, e ), \cO _{\sQ ( \mathbf i, e )} ( \lambda - H_j ) )^* & \cong W ( \lambda )_{u ( \mathbf i, e )} / W ( \lambda )_{u ( \mathbf i_j, e )}, \hskip 3mm \text{and} \\
H^{>0} ( \sQ ( \mathbf i, e ), \cO _{\sQ ( \mathbf i, e )} ( \lambda - H_j ) ) & = \{ 0\}\nonumber
\end{align}
for each $j \in I$. Note that the assertion follows from this when $\g$ is of type $\mathsf{A}_1$.

The induction hypothesis yields the desired cohomology vanishing if we replace $\mathbf i$ with $\mathbf i'$ obtained from $\mathbf i'$ by omitting the first entry $i_1$. In view of \cite{BB05} Corollary 2.2.3, each element of the set $S$ of maximal elements in $W$ that is smaller than $w$ is realized as $u ( \mathbf i'_j, v )$ for some $1 \le j \le \ell-1$. Since we have $W ( \lambda )_x \subset W ( \lambda )_y$ if $x < y \in W$, we can omit $j$ from the RHS of (\ref{intersect}) so that $u ( \mathbf i_j', v ) \not\in S$. By Proposition \ref{strict}, we deduce that
\begin{equation}
\sum_{x \in S} W ( \lambda )_x = (\sum_{x \in S} W ( \lambda )_x + W ( \lambda )_{s_{i_1} x} ) \cap W ( \lambda )_{u ( \mathbf i', e )} \subset W ( \lambda )_{u ( \mathbf i', e )} \subset W ( \lambda )_{u ( \mathbf i, e )}\label{incl}
\end{equation}
is the intersection of a graded $( H \cdot L_{i_1}^0)$-submodule of $W ( \lambda )_{u ( \mathbf i, e )}$ with $W ( \lambda )_{u ( \mathbf i', e )}$ (where $( H \cdot L_{i_1}^0)$ is the Levi subgroup of $\mathbf I _{i_1}$ that contains $H$). In view of Lemma \ref{d-est} (cf. the proof of Corollary \ref{cNS} and Theorem \ref{dmain}), the associated graded of (\ref{incl}) gives the direct sum of embeddings of $B_{i_1}^0$-modules of the form:
\begin{equation}
\{0\} \subset \{0\}, \{0\} \subset \C _{m \varpi}, \C_{m \varpi} \subset \C _{m\varpi}, \hskip 2mm \text{and} \hskip 2mm V ( m \varpi ) \subset V ( m\varpi ) \hskip 2mm  \text{inside} \hskip 2mm V ( m \varpi ),\label{class}
\end{equation}
where $V ( m \varpi )$ is the $(m+1)$-dimensional irreducible module of $L_{i_1}^0$, and $\C_{m \varpi}$ is its $B_{i_1}^0$-eigenspace. Since we have
$$H^k ( \sQ ( \mathbf i', e ), \cO_{\sQ ( \mathbf i', e )} ( \lambda - \sum H_{\mathbf i'} ) )^* \cong \begin{cases} W ( \lambda )_{u ( \mathbf i', e )} / \sum_{x \in S} W ( \lambda )_x & (k=0)\\ \{0 \} & (k > 0)\end{cases},$$
we deduce from (\ref{class}) with $\P^1$-calculations that
$$H^1 ( \sQ ( \mathbf i, e ), \cO_{\sQ ( \mathbf i, e )} ( \lambda - H_{\mathbf i} ) ) \cong H^1 ( \P^1, \mathcal F (-1) ) = \{ 0 \},$$
where $\mathcal F$ is the $( H \cdot L_{i_1}^0 )$-equivariant vector bundle induced from the graded $( H \cdot B_{i_1}^0 )$-module $W ( \lambda )_{u ( \mathbf i', e )} / \sum_{x \in S} W ( \lambda )_x$. Therefore, the long exact sequence associated to the short exact sequence
$$0 \to \cO_{\sQ ( \mathbf i, e )} ( \lambda - H_{\mathbf i} ) \to \cO_{\sQ ( \mathbf i, e )} ( \lambda - H_{\mathbf i'} ) \to \cO_{\sQ ( \mathbf i', e )} ( \lambda - H_{\mathbf i'} ) \to 0$$
(where $H_{\mathbf i'}$ on $\sQ ( \mathbf i, e )$ is the inflation of $H_{\mathbf i'}$ on $\sQ ( \mathbf i', e )$), together with the induction hypothesis implies the desired cohomology vanishing. This proceeds the induction, and consequently we have obtained the assertion when $|\mathbf i^-| = 0 = |\mathbf i^+|$ and $v = e$.

Next, we prove the assertion when $|\mathbf i^+| = 0$ and $v = e$. In view of the above discussion using (\ref{intersect}), Lemma \ref{d-est}, and Corollary \ref{cNS}, adding one element in the beginning of $\mathbf i^-$ amounts to form the corresponding vector bundles and then take its cohomology. In this case, the associated graded of an analogous filtration on
$$\sum_{b < j \le c} W ( \lambda )_{u ( \mathbf i'_j, e )} \subset W ( \lambda )_{u ( \mathbf i', e )} \subset W ( \lambda )_{u ( \mathbf i, e )}$$
adds some more cases $( \C_{m \varpi} = ) V \subsetneq V ( m \varpi ) \subset V ( m \varpi )$ to (\ref{class}). This is harmless (as we have no degree $-1$ twist) in this case, and we conclude the result by induction when $|\mathbf i^+| = 0$ and $v = e$.

We prove the assertion when $v = e$ and $\ell ( u ( \mathbf i^b, e )) = \ell - b$ hold using the previously shown cases. We consider the short exact sequence
$$0 \rightarrow \cO_{\sQ ( \mathbf j', e )} ( \lambda - \sum_{j = b+1}^{\ell - m} H_j) \rightarrow \cO_{\sQ ( \mathbf j', e )} ( \lambda - \sum_{j = b+1}^{\ell - m-1} H_j) \rightarrow \cO_{\sQ ( \mathbf j'', e )} ( \lambda - \sum_{j = b+1}^{\ell - m-1} H_j) \rightarrow 0,$$
where $\mathbf j'$ is the sequence formed by the first $(\ell - m)$ letters in $\mathbf i$, and $\mathbf j''$ is formed by the first $(\ell - m-1)$ letters in $\mathbf i$. By arguing from the case $m=0$ (corresponding to $|\mathbf i^{+}| = 0$), a repeated use of long exact sequences implies the result when $v = e$ and $\ell ( u ( \mathbf i^b, e )) = \ell - b$ hold by induction.

We can factor $q_{\mathbf i, e}$ into the composition of $q_{\mathbf i^-, \mathbf i^0, u ( \mathbf i^+, e)}$ and the inflation of $q_{\mathbf i^+, e}$. Hence, the Leray spectral sequence gives the general case of the assertion from Lemma \ref{D-van0} and the previously known cases as required.
\end{proof}

\begin{cor}\label{wD-van}
For each $\lambda \in \Lambda$ and $I = ( b,c ] \subset [1,\ell]$ so that $\ell ( u ( \mathbf i^b, v ) ) = |I| + \ell ( u ( \mathbf i^c, v ) )$, we have
$$\R^{k} ( q_{\mathbf i, v} ) _* \cO_{\sQ ( \mathbf i, v )} ( - \sum_{j \in I} H_j ) = \{ 0 \} \hskip 3mm \text{for each} \hskip 2mm k > 0.$$
\end{cor}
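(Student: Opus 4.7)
The plan is to bootstrap from the global cohomology vanishing of Proposition \ref{coh-tw} to the higher direct image vanishing by twisting with sufficiently positive pullback line bundles and running a Leray--Serre degeneration argument. This is the standard device for upgrading $H^{>0}$-vanishing into $\R^{>0}f_{*}$-vanishing: combine the projection formula with Serre vanishing on the target.

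First I would use that $\cO_{\sQ(\mathbf i, v)}(\lambda)$ is canonically the pullback $q_{\mathbf i, v}^{*}\cO_{\sQ}(\lambda)$, which is immediate from the construction of these line bundles via the embedding (\ref{Pemb}) together with the fact that $q_{\mathbf i, v}$ is a multiplication map (so it respects the $G[z]$-equivariant structures). The projection formula then yields
$$\R^{k}(q_{\mathbf i, v})_{*}\,\cO_{\sQ(\mathbf i, v)}\bigl(\lambda - \textstyle\sum_{j \in I} H_{j}\bigr) \cong \cO_{\sQ}(\lambda)\otimes \R^{k}(q_{\mathbf i, v})_{*}\,\cO_{\sQ(\mathbf i, v)}\bigl(-\textstyle\sum_{j \in I} H_{j}\bigr),$$
and feeding this into the Leray spectral sequence gives
$$E_{2}^{p,q} = H^{p}\bigl(\sQ(u(\mathbf i, v)),\,\cO_{\sQ}(\lambda)\otimes \R^{q}(q_{\mathbf i, v})_{*}\cO(-\textstyle\sum_{j \in I} H_{j})\bigr) \Rightarrow H^{p+q}\bigl(\sQ(\mathbf i, v),\,\cO(\lambda - \textstyle\sum_{j \in I} H_{j})\bigr),$$
whose abutment vanishes in positive total degree by Proposition \ref{coh-tw}.

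Next I would specialise $\lambda$ to a deeply dominant weight so that $\cO_{\sQ}(\lambda)$ is very ample in the sense of Corollary \ref{amplecone}. Restricting to the finite-type approximations $\sQ(u(\mathbf i, v), \beta)$ (which are projective and normal by Theorem \ref{BFnormal} and Theorem \ref{main}), Serre's vanishing theorem forces $E_{2}^{p, q} = 0$ for all $p > 0$. The spectral sequence then collapses to edge isomorphisms
$$H^{0}\bigl(\sQ(u(\mathbf i, v)),\,\cO_{\sQ}(\lambda)\otimes \R^{q}(q_{\mathbf i, v})_{*}\cO(-\textstyle\sum_{j \in I} H_{j})\bigr) \cong H^{q}\bigl(\sQ(\mathbf i, v),\,\cO(\lambda - \textstyle\sum_{j \in I} H_{j})\bigr) = 0$$
for every $q > 0$ and every sufficiently positive $\lambda$. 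Running $\lambda$ through a cofinal sequence of ample weights and using that the non-vanishing loci $\{y_{i}\neq 0\}$ used in the proof of Theorem \ref{pnw} cover $\sQ(u(\mathbf i, v))$ then forces the coherent pro-sheaf $\R^{q}(q_{\mathbf i, v})_{*}\cO(-\sum_{j\in I} H_{j})$ itself to vanish for $q > 0$.

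The main technical obstacle will be the ind-scheme bookkeeping: one has to check that the projection formula, the Leray spectral sequence, and Serre vanishing all interact correctly with the degreewise Mittag-Leffler conventions recalled at the start of this section. Concretely, I would need to verify that tensoring with $\cO_{\sQ}(\lambda)$ commutes with the relevant $\varprojlim$ (which is fine because the twist is invertible), and, more delicately, that a Serre vanishing range can be chosen uniformly enough in the ind-parameter $\beta$ that the pro-system of higher cohomologies is actually zero rather than merely pro-zero. Once that bookkeeping is in place, the transfer from Proposition \ref{coh-tw} to the higher pushforward vanishing is routine.
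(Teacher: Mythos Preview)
Your proposal is correct and is exactly the argument the paper gives: the paper's one-line proof reads ``In view of Lemma \ref{amplecone}, the assertion follows from the projection formula and Proposition \ref{coh-tw} by the Leray spectral sequence,'' which is precisely your projection-formula-plus-Serre-vanishing-plus-Leray degeneration. Your write-up simply unpacks the steps and flags the ind-scheme bookkeeping that the paper leaves implicit under the blanket Mittag-Leffler remarks at the start of the section.
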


\begin{proof}
In view of Corollary \ref{amplecone}, the assertion follows from the projection formula and Proposition \ref{coh-tw} by the Leray spectral sequence.\end{proof}

In the following, we assume that $\mathbf i$ is a reduced expression of $w$ unless stated otherwise. Note that the assumption of Corollary \ref{wD-van} holds automatically. For each $\lambda \in \Lambda_+$, we set
$$\mathcal E _{w} ( \lambda ) := ( q_{\mathbf i, e} )_* \cO_{\sQ ( \mathbf i, e )} ( \lambda - \sum_{k = 1}^{\ell} H_k ) \cong \left( ( q_{\mathbf i, e} )_* \cO_{\sQ ( \mathbf i, e )} ( - \sum_{k = 1}^{\ell} H_k ) \right) \otimes_{\cO _{\sQ}} \cO _{\sQ} ( \lambda ).$$
We have a natural inclusion $\mathcal E _{w} ( \lambda ) \subset \cO _{\sQ ( w )} ( \lambda )$ defined as:
\begin{equation}
\mathcal E _{w} ( \lambda ) \equiv ( q_{\mathbf i, e} )_* \cO_{\sQ ( \mathbf i, e )} ( \lambda - \sum_{k = 1}^{\ell} H_k ) \hookrightarrow ( q_{\mathbf i, e} )_* \cO_{\sQ ( \mathbf i, e )} ( \lambda ) \equiv \cO_{\sQ ( w )} ( \lambda ).\label{inclE}
\end{equation}
By construction, each $\mathcal E _{w} ( \lambda )$ is $( \mathbf I \rtimes \Gm )$-equivariant.

\begin{lem}
For each $w \in W$ and $\lambda \in \Lambda_+$, the sheaf $\mathcal E_w ( \lambda )$ is independent of the choice of a reduced expression of $w$.
\end{lem}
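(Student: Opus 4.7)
The strategy is to characterize $\mathcal E _{w} ( \lambda )$ intrinsically as a subsheaf of $\cO _{\sQ ( w )} ( \lambda )$ via the inclusion $(\ref{inclE})$, without reference to $\mathbf i$. Define
$$\partial \sQ ( w ) := \bigcup_{v \in W, \, v < w} \sQ ( v ),$$
regarded as a reduced closed subscheme of $\sQ ( w )$; this is visibly independent of any reduced expression. Writing $\mathcal I _{\partial \sQ ( w )}$ for its ideal sheaf in $\cO _{\sQ ( w )}$, I plan to prove
$$\mathcal E _{w} ( \lambda ) = \mathcal I _{\partial \sQ ( w )} \otimes _{\cO _{\sQ ( w )}} \cO _{\sQ ( w )} ( \lambda ) \subset \cO _{\sQ ( w )} ( \lambda ),$$
from which the lemma is immediate.

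By the projection formula together with Lemma~\ref{D-van0} (which gives $( q_{\mathbf i, e} )_* \cO_{\sQ ( \mathbf i, e )} = \cO _{\sQ ( w )}$), it suffices to identify $( q_{\mathbf i, e} )_* \cO_{\sQ ( \mathbf i, e )} ( - \sum_{k=1}^\ell H_k )$ with $\mathcal I _{\partial \sQ ( w )}$ as subsheaves of $\cO _{\sQ ( w )}$. The inclusion $\subset$ is relatively straightforward: for each maximal $v < w$, Theorem~\ref{cover} yields $\ell ( v ) = \ell ( w ) - 1$, and the strong exchange property (\cite[Corollary~2.2.3]{BB05}, already invoked in the proof of Proposition~\ref{coh-tw}) produces an index $j$ with $v = u ( \mathbf i_j, e )$. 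Restricted to $H_j$, the map $q_{\mathbf i, e}$ factors through $q_{\mathbf i_j, e}$, so its image lies in $\sQ ( v )$ by Lemma~\ref{D-van0}; any section of $\cO ( - \sum_k H_k )$ then vanishes along $H_j$ and hence pushes forward to one vanishing on $\sQ ( v )$. Letting $v$ range over maximal elements below $w$ recovers full vanishing on $\partial \sQ ( w )$.

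The reverse inclusion is the key step, to be argued locally. Each $H_k$ is a Cartier divisor in $\sQ ( \mathbf i, e )$ pulled back from a single factor $\mathbf I_{i_k}/\mathbf I \cong \P^1$ of the iterated $\mathbf I$-bundle description, and the $H_k$ are pairwise distinct prime divisors. Given a local section $s$ of $\mathcal I _{\partial \sQ ( w )}$, the pull-back $q_{\mathbf i, e}^* s$ vanishes along each $H_k$ (since $q_{\mathbf i, e} ( H_k ) \subset \partial \sQ ( w )$), and hence lies in $\cO _{\sQ ( \mathbf i, e )} ( - \sum_k H_k )$ because $\cO ( - \sum_k H_k ) = \bigcap_k \cO ( - H_k )$ for pairwise distinct Cartier divisors. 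Pushing forward and invoking the adjunction unit $s = ( q _{\mathbf i, e} )_* q_{\mathbf i, e}^* s$ (valid via Lemma~\ref{D-van0} and the normality of $\sQ ( w )$ from Theorem~\ref{main}) identifies $s$ as a section of $\mathcal E _{w} ( \lambda )$. The main obstacle I foresee is precisely the identity $\cO ( - \sum_k H_k ) = \bigcap_k \cO ( - H_k )$, which hinges on the Cartier property of the $H_k$ and their pairwise distinctness; both should be accessible from the explicit tower description of $\sQ ( \mathbf i, e )$ where each $H_k$ comes from a single layer.
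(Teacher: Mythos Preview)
Your argument is correct and yields the intrinsic identification $\mathcal E_w(\lambda)\cong\mathcal I_{\partial\sQ(w)}\otimes\cO_{\sQ(w)}(\lambda)$, which is a cleaner statement than the paper's. The only point flagged as an ``obstacle'' is not one: each $H_k$ is the preimage of the base point in a single $\P^1$-layer of the tower, hence a reduced Cartier divisor; and $H_k\cong\sQ(\mathbf i_k,e)$ is irreducible because $\sQ(e,\beta)$ is (it is the fiber of the $G$-equivariant map $\mathtt{ev}_0$ over $eB/B$, hence $\sQ_0(\beta)\cong G\times^B\sQ_0(e,\beta)$ forces the fiber to be irreducible). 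Distinctness of the $H_k$ is then enough to give $\bigcap_k\cO(-H_k)=\cO(-\sum_kH_k)$ at each finite level, and the rest goes through.

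The paper proceeds quite differently. It first observes that $q_{\mathbf i,e}$ factors as an $X(\mathbf i)$-fibration over $\sQ(e)$, reducing to the finite-dimensional Bott--Samelson statement that $(q_{\mathbf i})_*\cO_{X(\mathbf i)}(-\Sigma_{\mathbf i})$ is independent of $\mathbf i$. It then invokes the braid relations to reduce to rank two with $w=w_0$, computes $\Gamma(X,(q_{\mathbf i})_*\cO_{X(\mathbf i)}(\lambda-\Sigma_{\mathbf i}))^*\cong V(\lambda)/\sum_{v<w_0}V(\lambda)_v$ via Theorem~\ref{DCF}, and concludes by global generation for $\lambda\gg 0$. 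Your route avoids the rank-two reduction and the global-generation step entirely, and produces the boundary-ideal description as a byproduct; the paper's route, by contrast, stays within classical Bott--Samelson technology and does not require knowing irreducibility of $\sQ(e,\beta)$ or the normality input from Theorem~\ref{main}.
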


\begin{proof}
It is enough to check that all reduced expressions of $w$ gives rise to the same sheaf. We borrow notation from the proof of Theorem \ref{coh-tw} by choosing $v = e$ and $w \in W$. Then, (\ref{H^0_c}) and (\ref{intersect}) imply
$$H ^0 ( \sQ ( w ), \mathcal E _{w} ( \lambda ) )^* \cong W ( \lambda ) _{w} / \sum_{j = 1}^{\ell} W ( \lambda ) _{u ( \mathbf i_j, e )},$$
where $\mathbf i \in \mathtt I^{\ell}$ is a reduced expression of $w$. We have $W ( \lambda ) _{x} \subset W ( \lambda ) _{y}$ when $x < y$. In view of \cite{BB05} Corollary 2.2.3, we know that $u ( \mathbf i_j, e ) < w$, and $\{ u ( \mathbf i_j, e ) \}_{j = 1}^{\ell} \subset W$ exhausts the set of maximal elements in $W$ so that $< w$. In particular, the vector space $H ^0 ( \sQ ( w ), \mathcal E _{w} ( \lambda ) )$ depends only on $w$, and is independent of the choice of $\mathbf i$. Therefore, Corollary \ref{amplecone} forces all choices of $\mathbf i$ in the construction of the sheaves $\mathcal E _{w} ( \lambda )$ define the same subsheaf of $\cO_{\sQ ( w )} ( \lambda )$ as required.
\end{proof}

For $\lambda \in \Lambda_+$, we set $W _{\lambda} := \left< s_{i} \mid \left< \alpha^{\vee}_i, \lambda \right> = 0 \right>$. We define $W^{\lambda}$ to be the set of minimal length representatives of the coset $W / W_{\lambda}$.

\begin{lem}\label{restE}
For each $\lambda \in \Lambda_+$ and $w \in W^{\lambda}$, the module $\Gamma ( \sQ ( w ), \mathcal E_w ( \lambda ) )^*$ has an $\gI$-cyclic vector with its $\h$-weight $w \lambda$.
\end{lem}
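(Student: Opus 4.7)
The plan is to exhibit the cyclic vector explicitly. The sheaf inclusion $\mathcal E_w(\lambda) \hookrightarrow \cO_{\sQ(w)}(\lambda)$ from (\ref{inclE}) induces, by left-exactness of global sections and Theorem \ref{main} 1), a $\gI$-equivariant surjection
$$\pi : W(\lambda)_w \cong \Gamma(\sQ(w), \cO_{\sQ(w)}(\lambda))^* \twoheadrightarrow \Gamma(\sQ(w), \mathcal E_w(\lambda))^*.$$
Since $W(\lambda)_w$ is $\gI$-cyclically generated by the extremal vector $v_{w\lambda}$ of $\h$-weight $w\lambda$, the target is likewise cyclically generated by $\pi(v_{w\lambda})$, and it only remains to show $\pi(v_{w\lambda}) \neq 0$.

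Dually, this is equivalent to producing a nonzero section of $\mathcal E_w(\lambda)$ of $\h$-weight $-w\lambda$. I would obtain one by lifting the (up-to-scalar unique) weight $-w\lambda$ section $s_{w\lambda} \in \Gamma(\sQ(w), \cO_{\sQ(w)}(\lambda))$ corresponding to $v_{w\lambda}^* \in W(\lambda)_w^*$. By the definition of $\mathcal E_w(\lambda)$ as the pushforward of $\cO_{\sQ(\mathbf i, e)}(\lambda - \sum_k H_k)$, the section $s_{w\lambda}$ lies in $\mathcal E_w(\lambda)$ precisely when its pullback $q_{\mathbf i, e}^* s_{w\lambda}$ vanishes along each divisor $H_k$.

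For the vanishing, the multiplication map sends $H_k$ into $q_{\mathbf i_k, e}(\sQ(\mathbf i_k, e)) = \sQ(u(\mathbf i_k, e))$ by Lemma \ref{D-van0}, and the subword property of Bruhat order, applied to the reduced expression $\mathbf i$, forces $u(\mathbf i_k, e) < w$. It therefore suffices to show $s_{w\lambda}|_{\sQ(v)} = 0$ for every $v < w$. This restriction is dual to the Demazure inclusion $W(\lambda)_v \hookrightarrow W(\lambda)_w$ by Theorem \ref{main}, and $v_{w\lambda} \notin W(\lambda)_v$ because the $w\lambda$-weight subspace of $W(\lambda)_v$ vanishes whenever $v < w$ --- a standard extremal-vector property of Demazure submodules inherited from the finite case via $V(\lambda)_v \subset W(\lambda)_v$ and Theorem \ref{DCF} 3)--4). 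Hence $v_{w\lambda}^*$ annihilates $W(\lambda)_v$, so $s_{w\lambda}$ vanishes on $\sQ(v)$, completing the argument.

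The main technical point I anticipate is the clean justification that $v_{w\lambda} \notin W(\lambda)_v$ for $v < w$ in the semi-infinite setting; I would establish it by restricting to the finite Demazure structure on $V(\lambda) \subset W(\lambda)$ via Theorem \ref{DCF}, using that the classical Demazure module $V(\lambda)_v$ does not contain $v_{w\lambda}$ whenever $w \not\le v$.
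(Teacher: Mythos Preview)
Your first paragraph is already the paper's entire proof: the inclusion $\mathcal E_w(\lambda)\hookrightarrow\cO_{\sQ(w)}(\lambda)$ gives a $\gI$-surjection $W(\lambda)_w\twoheadrightarrow\Gamma(\sQ(w),\mathcal E_w(\lambda))^*$, and since $W(\lambda)_w$ is cyclic on $v_{w\lambda}$ of weight $w\lambda$, so is the quotient. The lemma as stated asks only for a cyclic generator of weight $w\lambda$; it does not assert that this vector is nonzero, so nothing further is required.

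Your remaining paragraphs attempt to prove the stronger fact $\pi(v_{w\lambda})\neq 0$, which the paper does not address. The overall strategy there is sound, but one step is incorrect as written. You claim that ``the $w\lambda$-weight subspace of $W(\lambda)_v$ vanishes whenever $v<w$''. This is false: for instance with $w=s_i$, $v=e$, the vector $(f_{\alpha_i}\otimes z)^{\langle\alpha_i^\vee,\lambda\rangle}v_\lambda\in W(\lambda)_e$ has $\h$-weight $s_i\lambda$ and is generically nonzero. What \emph{is} true is that the degree-$0$ part of the $w\lambda$-weight space of $W(\lambda)_v$ vanishes for $v<w$ with $w\in W^\lambda$, because that degree-$0$ part sits inside $V(\lambda)_v$ and the extremal vector $v_{w\lambda}$ does not lie in $V(\lambda)_v$ (using minimality of $w$ in $wW_\lambda$). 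Since $v_{w\lambda}^*$ is a degree-$0$ functional, this weaker statement is exactly what you need to conclude $v_{w\lambda}^*|_{W(\lambda)_v}=0$; so the argument is salvageable with this correction, but is in any case unnecessary for the lemma.
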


\begin{proof}
By construction, we have an inclusion $\mathcal E_w ( \lambda ) \subset \cO _{\sQ ( w )} ( \lambda )$. This results a surjection $W ( \lambda )_w \rightarrow \Gamma ( \sQ ( w ), \mathcal E_w ( \lambda ) )^*$ of $\gI$-modules by taking the dual of their global sections. Since $W ( \lambda )_w$ is an $\gI$-module with a cyclic vector of weight $w \lambda$, so is $\Gamma ( \sQ ( w ), \mathcal E_w ( \lambda ) )^*$.
\end{proof}

\begin{thm}[Cherednik-Orr \cite{CO} Proposition 2.5]\label{COr}
Let $\lambda \in \Lambda_+$ and let $w \in W^{\lambda}$ so that $s_i w > w$ and $s_i w \in W^{\lambda}$ for some $i \in \mathtt I$. Then, we have:
\begin{enumerate}
\item If $w^{-1} \alpha_i = \alpha_j$ for some $j \in \mathtt I$ so that $\left< \alpha_j^{\vee}, \lambda \right> > 0$, then we have
$$( 1 - q^{\left< \alpha_j^{\vee}, \lambda \right>} ) E_{-s_i w \lambda} ^{\dagger} ( q^{-1}, \infty ) = D_i \left( E_{-w \lambda} ^{\dagger} ( q^{-1}, \infty ) \right) - E_{-w \lambda} ^{\dagger} ( q^{-1}, \infty );$$
\item If $w^{-1} \alpha_i$ is not a simple root, then we have
$$E_{- s_i w \lambda} ^{\dagger} ( q^{-1}, \infty ) = D_i \left( E_{- w \lambda} ^{\dagger} ( q^{-1}, \infty ) \right) - E_{- w \lambda} ^{\dagger} ( q^{-1}, \infty ).$$
\end{enumerate}
\end{thm}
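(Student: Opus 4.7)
The plan is to derive both identities from a single cohomological recurrence relating the sheaves $\mathcal{E}_w(\lambda)$ and $\mathcal{E}_{s_iw}(\lambda)$, and then to transport this recurrence to $E^{\dagger}$'s via the character formula of Theorem~\ref{fnsMac}. The cohomological recurrence is analogous to the one implicit in Theorem~\ref{dmain}, but twisted by the subtraction of the boundary divisor that defines $\mathcal{E}_w(\lambda)$.

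First I would fix a reduced expression $\mathbf{i}$ of $w$, so that $(i, \mathbf{i})$ is a reduced expression of $s_iw$, and write $\sQ((i, \mathbf{i}), e) = \mathbf{I}_i \times^{\mathbf{I}} \sQ(\mathbf{i}, e)$. The multiplication $q_{(i, \mathbf{i}), e}$ then factors as
$$\sQ((i, \mathbf{i}), e) \xrightarrow{\mathrm{id} \times q_{\mathbf{i}, e}} Z := \mathbf{I}_i \times^{\mathbf{I}} \sQ(w) \xrightarrow{\mu} \sQ(s_iw).$$
Writing $\tilde{H}_0 \subset Z$ for the divisor pulled back from the identity coset of $\P^1 = \mathbf{I}_i/\mathbf{I}$, the projection formula along $\mathrm{id} \times q_{\mathbf{i}, e}$, combined with the vanishing statements of Lemma~\ref{D-van0} and Corollary~\ref{wD-van}, produces the sheaf-level identity
$$\mathcal{E}_{s_iw}(\lambda) \cong \mu_*\bigl[(\mathbf{I}_i \times^{\mathbf{I}} \mathcal{E}_w(\lambda)) \otimes \cO_Z(-\tilde{H}_0)\bigr].$$
Next I would tensor the sequence $0 \to \cO_Z(-\tilde{H}_0) \to \cO_Z \to \cO_{\tilde{H}_0} \to 0$ with $\mathbf{I}_i \times^{\mathbf{I}} \mathcal{E}_w(\lambda)$ and take cohomology. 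The restriction to $\tilde{H}_0 \cong \sQ(w)$ recovers $\mathcal{E}_w(\lambda)$, while the $\P^1$-bundle structure $Z \to \P^1$ together with Lemma~\ref{d-est} (applicable by the cyclic weight vector of Lemma~\ref{restE}) identifies the middle cohomology with $D_i$ applied to $\mathrm{ch}\, H^0(\sQ(w), \mathcal{E}_w(\lambda))^*$. Higher cohomology vanishing needed for the long exact sequence to collapse to three terms follows from Proposition~\ref{coh-tw} via the Leray spectral sequence. The outcome is
$$\mathrm{ch}\, H^0(\sQ(s_iw), \mathcal{E}_{s_iw}(\lambda))^* = D_i\bigl(\mathrm{ch}\, H^0(\sQ(w), \mathcal{E}_w(\lambda))^*\bigr) - \mathrm{ch}\, H^0(\sQ(w), \mathcal{E}_w(\lambda))^*.$$

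With the character identification $\mathrm{ch}\, H^0(\sQ(w), \mathcal{E}_w(\lambda))^* = \mathcal{C}_w(\lambda) \cdot E_{-w\lambda}^{\dagger}(q^{-1}, \infty)$ provided by Theorem~\ref{fnsMac}, in which $\mathcal{C}_w(\lambda) = \prod_{i \in \mathtt{I}} \prod_{k=1}^{\left<\alpha_i^{\vee}, \lambda_w\right>}(1 - q^k)^{-1}$ is $s_i$-invariant and hence commutes through $D_i$, division by $\mathcal{C}_w(\lambda)$ converts the recurrence to
$$\bigl(\mathcal{C}_{s_iw}(\lambda)/\mathcal{C}_w(\lambda)\bigr) \cdot E_{-s_iw\lambda}^{\dagger}(q^{-1}, \infty) = D_i\bigl(E_{-w\lambda}^{\dagger}(q^{-1}, \infty)\bigr) - E_{-w\lambda}^{\dagger}(q^{-1}, \infty).$$
In Case~2 the weight $\lambda_{s_iw}$ equals $\lambda_w$, the prefactor is $1$, and the second formula of the theorem follows. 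In Case~1, the passage $w \mapsto ws_j$ shifts $\lambda_w$ by exactly $\left<\alpha_j^{\vee}, \lambda\right>$ in its $j$-th component (reflecting the additional generator that enters the free $\C[\A^{(\lambda_w)}]$-action when the Demazure chain meets the extra simple root $\alpha_j$), so the prefactor equals $(1 - q^{\left<\alpha_j^{\vee}, \lambda\right>})^{-1}$, and the first formula of the theorem follows after clearing denominators.

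The main obstacle I expect is twofold. First, the sheaf-theoretic identification $\mathcal{E}_{s_iw}(\lambda) \cong \mu_*[\cdots]$ and the collapse of the long exact sequence must be established in the pro-coherent, ind-scheme setting; this requires Mittag-Leffler control analogous to Claim~\ref{change} and relies on the normality of $\sQ(s_iw)$ supplied by Theorem~\ref{main}. Second, and more delicate, is pinning down how the dominant weight $\lambda_w$ in Theorem~\ref{fnsMac} changes under the Case~1 step $w \mapsto ws_j$; this requires interpreting $\lambda_w$ representation-theoretically as the polynomial ring whose free action realises the $\C[\A^{(\lambda_w)}]$-rank of $H^0(\sQ(w), \mathcal{E}_w(\lambda))^*$, and matching this change with the one additional generator created exactly when $w^{-1}\alpha_i$ is a simple root $\alpha_j$ with $\left<\alpha_j^{\vee}, \lambda\right> > 0$.
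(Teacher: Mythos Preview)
Your argument is circular. In the paper, Theorem~\ref{COr} is not proved at all: it is \emph{quoted} from Cherednik--Orr, and the one-line ``proof'' merely observes that setting $T_i := D_i - 1$ and applying the bar-involution matches the Hecke-operator presentation of \cite{CO} Proposition~2.5 at $t=\infty$. The paper then \emph{uses} Theorem~\ref{COr} as input: the proof of Proposition~\ref{epMac} establishes the geometric recursion $\chi(\mathcal E_{s_iw}(\lambda)) = D_i(\chi(\mathcal E_w(\lambda))) - \chi(\mathcal E_w(\lambda))$ and then compares it with Theorem~\ref{COr} to propagate the character identification by induction on $w$; Corollary~\ref{gnsMac} (which is Theorem~\ref{fnsMac} of the introduction) then follows. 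So the logical flow is $\text{\ref{COr}} \Rightarrow \text{\ref{epMac}} \Rightarrow \text{\ref{gnsMac}} = \text{\ref{fnsMac}}$. Your proof invokes Theorem~\ref{fnsMac} to identify $\mathrm{ch}\,H^0(\sQ(w),\mathcal E_w(\lambda))^*$ with $\mathcal C_w(\lambda)\cdot E^{\dagger}_{-w\lambda}(q^{-1},\infty)$, and from there recovers Theorem~\ref{COr}; but that identification already rests on Theorem~\ref{COr}.

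Put differently, what you have written is precisely the induction step of Proposition~\ref{epMac} run in reverse. The geometric content you sketch (the short exact sequence on $Z$, the collapse via Proposition~\ref{coh-tw} and Corollary~\ref{wD-van}, the computation of $\lambda_{s_iw}$ versus $\lambda_w$) is exactly what the paper does to prove \ref{epMac} \emph{assuming} \ref{COr}; it does not furnish an independent proof of \ref{COr}. A minor additional slip: in Case~1 you claim the prefactor $\mathcal C_{s_iw}/\mathcal C_w$ equals $(1-q^{\left<\alpha_j^\vee,\lambda\right>})^{-1}$, but since $\lambda_{s_iw}=\lambda_w-\varpi_j$ one gets $\mathcal C_{s_iw}/\mathcal C_w = (1-q^{\left<\alpha_j^\vee,\lambda\right>})$, not its inverse; with the correct sign no ``clearing denominators'' is needed.
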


\begin{proof}
If we set $T_i := D_i - 1$, then the adjoint of the bar-involution yields the Hecke operator $T_i$ specialized to $t = \infty$ (see e.g. \cite{NNS} $1^{\text{st}}$ ver. 5.1). Therefore, the current formulation is equivalent to \cite{CO} Proposition 2.5.
\end{proof}

\begin{prop}\label{epMac}
For each $\lambda \in \Lambda_+$ and $w \in W^{\lambda}$, we define $\lambda _w := \lambda - \sum_{w \alpha_j < 0} \varpi_j$. Then, we have
$$\sum_{i \ge 0} (-1)^i \mathrm{ch} \, H^i ( \sQ ( w ), \mathcal E_{w} ( \lambda ) ) ^* = \left( \prod_{i \in \mathtt I} \prod_{k = 1} ^{\left< \alpha_i ^{\vee}, \lambda_w \right>} \frac{1}{1 - q^{k}} \right) \cdot E_{- w \lambda} ^{\dagger} ( q^{-1}, \infty ).$$
\end{prop}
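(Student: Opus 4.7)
The plan is to compute the Euler characteristic on $\sQ(w)$ via the Bott-Samelson type resolution $q_{\mathbf i, e}$, identify the result with an iterated Demazure difference operator applied to $\mathrm{ch}\, W(\lambda)_e$, and then match it against the Cherednik-Orr recursion (Theorem \ref{COr}).

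Fix a reduced expression $\mathbf i = (i_1, \ldots, i_\ell)$ of $w$. Corollary \ref{wD-van} (applied with $I = [1,\ell]$) gives $\R^{>0}(q_{\mathbf i, e})_* \cO(\lambda - \sum_k H_k) = 0$, so $H^i(\sQ(w), \mathcal E_w(\lambda))$ equals $H^i(\sQ(\mathbf i, e), \cO(\lambda - \sum_k H_k))$. The divisors $H_k$ live on distinct factors of the iterated product $\mathbf I_{i_1} \times^{\mathbf I} \cdots \times^{\mathbf I} \mathbf I_{i_\ell}$, hence intersect transversally; iterating the short exact sequences $0 \to \cO(-H_k) \to \cO \to \cO_{H_k} \to 0$, together with $\bigcap_{k \in S} H_k = \sQ(\mathbf i_S, e)$ (with $\mathbf i_S$ the subsequence of $\mathbf i$ obtained by deleting the entries at positions in $S$), Lemma \ref{D-van0}, and Theorem \ref{main}, yields
\begin{equation*}
\sum_i (-1)^i \mathrm{ch}\, H^i(\sQ(w), \mathcal E_w(\lambda))^* = \sum_{S \subseteq [1,\ell]} (-1)^{|S|} \mathrm{ch}\, W(\lambda)_{u(\mathbf i_S, e)}.
\end{equation*}

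Expanding $(D_{i_1} - 1) \cdots (D_{i_\ell} - 1) = \sum_S (-1)^{|S|} \prod_{k \notin S} D_{i_k}$ (with factors ordered by $k$), the characterization of $u(\mathbf i_S, e)$ as a left-greedy Demazure product (Lemma \ref{D-van0}) combined with Theorem \ref{dmain} identifies this alternating sum with $(D_{i_1}-1) \cdots (D_{i_\ell}-1) \, \mathrm{ch}\, W(\lambda)_e$. For the base case $w = e$: Theorem \ref{FM} gives $W(\lambda)_e$ as a free $\C[\A^{(\lambda)}]$-module of rank $\dim W_\lambda$ with reduction $W_\lambda$, and combining $\mathrm{ch}\, \C[\A^{(\lambda)}] = \prod_i \prod_{k=1}^{\langle\alpha_i^\vee,\lambda\rangle}(1-q^k)^{-1}$ with $\mathrm{ch}\, W_\lambda = E_{-\lambda}^\dagger(q^{-1},\infty)$ (Theorem \ref{nMac}) establishes the proposition at $w = e$.

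It then remains to prove
\begin{equation*}
T_{i_1} T_{i_2} \cdots T_{i_\ell} E_{-\lambda}^\dagger(q^{-1},\infty) = \prod_{\alpha_j \in \Pi :\, w\alpha_j < 0}(1 - q^{\langle\alpha_j^\vee,\lambda\rangle}) \cdot E_{-w\lambda}^\dagger(q^{-1},\infty),
\end{equation*}
where $T_i := D_i - 1$; together with the definition $\lambda_w = \lambda - \sum_{w\alpha_j < 0}\varpi_j$, this produces exactly the right hand side of the proposition after restoring the $w$-independent $q$-prefactor (which is fixed by each $T_i$). I iterate Theorem \ref{COr} along the chain $u_k := s_{i_{\ell-k+1}} \cdots s_{i_\ell}$ ($k = 0, \ldots, \ell$, with $u_0 = e$ and $u_\ell = w$). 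Each $u_k$ is a right factor of the reduced factorization $w = (s_{i_1}\cdots s_{i_{\ell-k}}) u_k$, and a standard property of minimal length coset representatives forces $u_k \in W^\lambda$; the hypothesis $s_{i_{\ell-k+1}} u_{k-1} > u_{k-1}$ is built in by reducedness. At step $k$, Theorem \ref{COr} produces a factor $(1 - q^{\langle\alpha_j^\vee,\lambda\rangle})$ precisely when $u_{k-1}^{-1}\alpha_{i_{\ell-k+1}} = \alpha_j$ is simple, i.e., when $u_k\alpha_j$ crosses from positive to negative. The main delicate point is a no-uncrossing observation: because $\mathbf i$ is reduced, $u_k^{-1}\alpha_{i_{\ell-k}} > 0$ for every $k$, which rules out the reverse transition ($u_{k'}\alpha_j$ going from negative back to positive). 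Consequently each $\alpha_j \in \Pi$ with $w\alpha_j < 0$ contributes the factor exactly once, those with $w\alpha_j > 0$ contribute none, and the claimed product emerges.
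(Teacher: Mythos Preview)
Your proof is correct and ultimately rests on the same two ingredients as the paper's: the base case $w=e$ via Theorem~\ref{nMac} (through Theorem~\ref{FM}), and the inductive matching with the Cherednik--Orr recursion (Theorem~\ref{COr}). The difference is organizational. The paper derives the one-step recursion $\chi(\mathcal E_{s_iw}(\lambda)) = (D_i-1)\,\chi(\mathcal E_w(\lambda))$ directly from a single short exact sequence $0 \to (q_i)_*\mathcal E_w^+(-H) \to (q_i)_*\mathcal E_w^+ \to \mathcal E_w \to 0$ and then runs the induction. You instead expand $\chi(\mathcal E_w(\lambda))$ globally by inclusion--exclusion over all divisors $H_k$, land on the alternating sum $\sum_S (-1)^{|S|}\mathrm{ch}\,W(\lambda)_{u(\mathbf i_S,e)}$, and recombine it as $T_{i_1}\cdots T_{i_\ell}\,\mathrm{ch}\,W(\lambda)_e$ via the Demazure-product description of $u(\mathbf i_S,e)$. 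Your route yields the pleasant closed formula $\chi(\mathcal E_w(\lambda)) = T_{i_1}\cdots T_{i_\ell}\,\mathrm{ch}\,W(\lambda)_e$ in one shot, at the cost of invoking more machinery; the paper's route is shorter because it never needs the non-reduced subwords.

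One small point you should make explicit: to identify $\prod_{k\notin S} D_{i_k}\,\mathrm{ch}\,W(\lambda)_e$ with $\mathrm{ch}\,W(\lambda)_{u(\mathbf i_S,e)}$ for a general (possibly non-reduced) subword $\mathbf i_S$, you need $D_i\,\mathrm{ch}\,W(\lambda)_v = \mathrm{ch}\,W(\lambda)_v$ when $s_iv<v$. Theorem~\ref{dmain} only states the length-additive case; the remaining case follows since then $\mathrm{ch}\,W(\lambda)_v = D_i\,\mathrm{ch}\,W(\lambda)_{s_iv}$ and $D_i^2=D_i$. Your ``no-uncrossing'' bookkeeping in step~5 (ruling out $u_{k'-1}\alpha_j = -\alpha_{i_{\ell-k'+1}}$ via reducedness) and the observation that right factors of $w\in W^\lambda$ stay in $W^\lambda$ are both fine.
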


\begin{proof}
We define the (dual) Euler-Poincar\'e characteristic of an $( \mathbf I \rtimes \Gm )$-equivariant (pro-)coherent sheaf $\mathcal F$ by
$$\chi ( \mathcal F ) := \sum_{i \ge 0}  ( -1 )^i \mathrm{ch} \, H^i ( \sQ, \mathcal F ) ^* \in \Q (\!(q)\!) [\Lambda] \cup \{ \infty \},$$
where we understand it to be $\infty$ if one the coefficient of a monomial is $\infty$.

We prove the assertion by induction on $w \in W^{\lambda}$ (as every $w \in W^{\lambda}$ is connected to $e$ by the left multiplications of $\{ s_i \}_{i \in \mathtt I}$ inside $W^{\lambda}$). The case $w = e$ is Theorem \ref{nMac}. Hence, we assume the assertion for every $v < w$ to deduce the assertion for $w$. For $i \in \mathtt I$ so that $w < s_i w \in W^{\lambda}$, we set $H := H_1$ and $q_i := q_{i,w}$ for simplicity. By Corollary \ref{wD-van}, we have a short exact sequence
$$0 \rightarrow (q_i)_* \mathcal E_{w}^+ ( - H ) \rightarrow (q_i)_* \mathcal E^+_{w} \rightarrow \mathcal E_{w} \rightarrow 0,$$
where we denote $\mathcal E^+_{w}$ the inflation of $\mathcal E_w$ from $\sQ ( w )$ to $\sQ ( i, w )$. Now we have
\begin{equation}
\chi ( (q_i)_* \mathcal E_{w}^+ ( - H ) ) = D_i ( \chi ( \mathcal E_w ) ) - \chi ( \mathcal E_w ).\label{E-exact}
\end{equation}

In case $w^{-1} \alpha_i = \alpha_j$ for some $j \in \mathtt I$, then we have $\lambda_{s_i w} = \lambda_w - \varpi_j$. Therefore, the comparison of (\ref{E-exact}) and Theorem \ref{COr} 1) proceeds the induction.

In case $w^{-1} \alpha_i \not\in \Pi$, then we have $\lambda_{s_i w} = \lambda_w$. Therefore, the comparison of (\ref{E-exact}) and Theorem \ref{COr} 2) proceeds the induction.

These proceed the induction in both cases as required.
\end{proof}

\begin{cor}\label{gnsMac}
For each $\lambda \in \Lambda_+$ and $w \in W^{\lambda}$, we define $\lambda _w := \lambda - \sum_{w \alpha_j < 0} \varpi_j$. Then, we have
$$\mathrm{ch} \, H^i ( \sQ ( w ), \mathcal E_{w} ( \lambda ) ) ^* = \begin{cases}
\left( \prod_{i \in \mathtt I} \prod_{k = 1} ^{\left< \alpha_i ^{\vee}, \lambda_w \right>} \frac{1}{1 - q^{k}} \right) \cdot E_{- w \lambda} ^{\dagger} ( q^{-1}, \infty ) & (i=0)\\
0 & ( i > 0)
\end{cases}.$$
\end{cor}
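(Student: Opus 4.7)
The plan is to reduce the statement to Proposition~\ref{epMac} by establishing the vanishing $H^i(\sQ(w),\mathcal E_w(\lambda))=0$ for $i>0$: once that is known, the alternating sum of Proposition~\ref{epMac} collapses to the $i=0$ term and produces exactly the asserted character formula.

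To carry out this vanishing, fix a reduced expression $\mathbf i=(i_1,\dots,i_\ell)$ of $w$ and recall that by definition
$$\mathcal E_w(\lambda)=(q_{\mathbf i,e})_*\,\mathcal O_{\sQ(\mathbf i,e)}\!\left(\lambda-\sum_{k=1}^{\ell} H_k\right).$$
Since $\mathcal O_{\sQ(\mathbf i,e)}(\lambda)$ is the pullback of $\mathcal O_{\sQ(w)}(\lambda)$ along $q_{\mathbf i,e}$, the projection formula gives
$$R^k(q_{\mathbf i,e})_*\,\mathcal O_{\sQ(\mathbf i,e)}\!\left(\lambda-\sum_{k=1}^\ell H_k\right)\;\cong\;\mathcal O_{\sQ(w)}(\lambda)\otimes_{\mathcal O_{\sQ(w)}} R^k(q_{\mathbf i,e})_*\,\mathcal O_{\sQ(\mathbf i,e)}\!\left(-\sum_{k=1}^\ell H_k\right),$$
and the right-hand side vanishes for $k>0$ by Corollary~\ref{wD-van} applied to $I=[1,\ell]$ and $v=e$ (the hypothesis $\ell(u(\mathbf i^0,e))=\ell+\ell(u(\mathbf i^\ell,e))$ is automatic, since $\mathbf i$ is reduced, $u(\mathbf i,e)=w$ and $u(\emptyset,e)=e$).

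Consequently, the Leray spectral sequence for $q_{\mathbf i,e}$ degenerates and yields the isomorphism
$$H^i(\sQ(w),\mathcal E_w(\lambda))\;\cong\;H^i\!\left(\sQ(\mathbf i,e),\,\mathcal O_{\sQ(\mathbf i,e)}\!\left(\lambda-\sum_{k=1}^\ell H_k\right)\right)$$
for every $i\ge 0$. Proposition~\ref{coh-tw}, applied again with $b=0$, $c=\ell$, $v=e$, asserts that the right-hand side vanishes for $i>0$. This establishes the higher-cohomology vanishing, and substituting it into Proposition~\ref{epMac} yields the identification of $\mathrm{ch}\,H^0(\sQ(w),\mathcal E_w(\lambda))^*$ with $\bigl(\prod_{i\in\mathtt I}\prod_{k=1}^{\langle\alpha_i^\vee,\lambda_w\rangle}(1-q^k)^{-1}\bigr)\cdot E^\dagger_{-w\lambda}(q^{-1},\infty)$.

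The only non-formal ingredient is the combination of Corollary~\ref{wD-van} and Proposition~\ref{coh-tw}; both are already proved earlier in the paper, so no genuine obstacle remains. If anything requires care, it is checking that the Mittag--Leffler hypotheses needed for the projection formula and the Leray spectral sequence in the pro-coherent setting are satisfied, but this is exactly the framework set up at the beginning of Section~6 and applies verbatim.
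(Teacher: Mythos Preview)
Your proposal is correct and follows essentially the same approach as the paper: choose a reduced expression $\mathbf i$ of $w$ (adapted to $e$), use Corollary~\ref{wD-van} together with the projection formula and Leray spectral sequence to identify $H^i(\sQ(w),\mathcal E_w(\lambda))$ with $H^i(\sQ(\mathbf i,e),\mathcal O_{\sQ(\mathbf i,e)}(\lambda-\sum H_k))$, then invoke Proposition~\ref{coh-tw} for the higher vanishing and Proposition~\ref{epMac} for the $i=0$ character. The paper's proof states this in two sentences; you have simply unpacked the mechanism more explicitly.
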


\begin{proof}
By setting $\mathbf i$ to be adapted to $e$, Proposition \ref{coh-tw} and Corollary \ref{wD-van} implies
$$H^i ( \sQ ( w ), \mathcal E_{w} ( \lambda ) ) ^* = \{ 0 \} \hskip 3mm i > 0$$
Therefore, Proposition \ref{epMac} yields the result.
\end{proof}

{\footnotesize
\bibliography{coref}

\begin{thebibliography}{10}

\bibitem{BN04}
Jonathan Beck and Hiraku Nakajima.
\newblock Crystal bases and two-sided cells of quantum affine algebras.
\newblock {\em Duke Math. J.}, 123(2):335--402, 2004.

\bibitem{BB05}
Anders Bjorner and Francesco Brenti.
\newblock {\em Combinatorics of {C}oxeter Groups}, volume 231 of {\em Graduate
  Text in Mathematics}.
\newblock Springer Science+Business Media, Inc., 2005.

\bibitem{BG}
A.~Braverman and D.~Gaitsgory.
\newblock Geometric {E}isenstein series.
\newblock {\em Invent. Math.}, 150(2):287--384, 2002.

\bibitem{BFunp}
Alexander Braverman and Michael Finkelberg.
\newblock Nil-daha and semi-infinite {S}chubert varieties.
\newblock early draft probably dates back to 2012--2014.

\bibitem{BF14a}
Alexander Braverman and Michael Finkelberg.
\newblock Semi-infinite {S}chubert varieties and quantum {$K$}-theory of flag
  manifolds.
\newblock {\em J. Amer. Math. Soc.}, 27(4):1147--1168, 2014.

\bibitem{BF14b}
Alexander Braverman and Michael Finkelberg.
\newblock Weyl modules and {$q$}-{W}hittaker functions.
\newblock {\em Math. Ann.}, 359(1-2):45--59, 2014.

\bibitem{BF14c}
Alexander Braverman and Michael Finkelberg.
\newblock Twisted zastava and $q$-{W}hittaker functions.
\newblock {\em J. London Math. Soc.}, 96(2):309--325, 2017.

\bibitem{CI14}
Vyjayanthi Chari and Bogdan Ion.
\newblock B{GG} reciprocity for current algebras.
\newblock {\em Compos. Math.}, 151(7):1265--1287, 2015.

\bibitem{CL06}
Vyjayanthi Chari and Sergei Loktev.
\newblock Weyl, {D}emazure and fusion modules for the current algebra of
  {$\mathfrak{sl}_{r+1}$}.
\newblock {\em Adv. Math.}, 207(2):928--960, 2006.

\bibitem{CP01}
Vyjayanthi Chari and Andrew Pressley.
\newblock Weyl modules for classical and quantum affine algebras.
\newblock {\em Represent. Theory}, 5:191--223 (electronic), 2001.

\bibitem{C}
Ivan Cherednik.
\newblock Nonsymmetric {M}acdonald polynomials.
\newblock {\em Int. Math. Res. Notices}, (10):483--515, 1995.

\bibitem{CO}
Ivan Cherednik and Daniel Orr.
\newblock Nonsymmetric difference {W}hittaker functions.
\newblock {\em Math. Z.}, 279(3-4):879--938, 2015.

\bibitem{Dem74}
Michel Demazure.
\newblock D\'esingularisation des vari\'et\'es de {S}chubert
  g\'en\'eralis\'ees.
\newblock {\em Ann. Sci. \'Ecole Norm. Sup. (4)}, 7:53--88, 1974.

\bibitem{FFKM}
Boris Feigin, Michael Finkelberg, Alexander Kuznetsov, and Ivan Mirkovi{\'c}.
\newblock Semi-infinite flags. {II}. {L}ocal and global intersection cohomology
  of quasimaps' spaces.
\newblock In {\em Differential topology, infinite-dimensional {L}ie algebras,
  and applications}, volume 194 of {\em Amer. Math. Soc. Transl. Ser. 2}, pages
  113--148. Amer. Math. Soc., Providence, RI, 1999.

\bibitem{FF}
Boris Feigin and Edward Frenkel.
\newblock Affine {K}ac-{M}oody algebras and semi-infinite flag manifold.
\newblock {\em Comm. Math. Phys.}, 128:161--189, 1990.

\bibitem{FM}
Evgeny Feigin and Ievgen Makedonskyi.
\newblock Generalized {W}eyl modules, alcove paths and {M}acdonald polynomials.
\newblock {\em Selecta Mathematica}, 23(4):2863--2897, 2017.

\bibitem{FMO}
Evgeny Feigin, Ievgen Makedonskyi, and Daniel Orr.
\newblock Generalized {W}eyl modules and nonsymmetric $q$-{W}hittaker
  functions, 2016.

\bibitem{FM99}
Michael Finkelberg and Ivan Mirkovi{\'c}.
\newblock Semi-infinite flags. {I}. {C}ase of global curve {$\bold P^1$}.
\newblock In {\em Differential topology, infinite-dimensional {L}ie algebras,
  and applications}, volume 194 of {\em Amer. Math. Soc. Transl. Ser. 2}, pages
  81--112. Amer. Math. Soc., Providence, RI, 1999.

\bibitem{FL07}
Ghislan Fourier and Peter Littelmann.
\newblock Weyl modules, {D}emazure modules, {KR}-modules, crystals, fusion
  products and limit constructions.
\newblock {\em Adv. Math.}, 211(2):566--593, 2007.

\bibitem{GL03}
Alexander Givental and Yuan-Pin Lee.
\newblock Quantum {$K$}-theory on flag manifolds, finite-difference {T}oda
  lattices and quantum groups.
\newblock {\em Invent. Math.}, 151(1):193--219, 2003.

\bibitem{Ion03}
Bogdan Ion.
\newblock Nonsymmetric {M}acdonald polynomials and {D}emazure characters.
\newblock {\em Duke Math. J.}, 116(2):299--318, 2003.

\bibitem{INS16}
Motohiro Ishii, Satoshi Naito, and Daisuke Sagaki.
\newblock Semi-infinite {L}akshmibai--{S}eshadri path model for level-zero
  extremal weight modules over quantum affine algebras.
\newblock {\em Adv. in Math.}, 290:967 -- 1009, 2016.

\bibitem{Jos84}
Anthony Joseph.
\newblock On the {D}emazure character formula.
\newblock {\em Ann. Sci. \'Ecole Norm. Sup. (4)}, 18(3):389--419, 1985.

\bibitem{Kas94}
Masaki Kashiwara.
\newblock Crystal bases of modified quantized enveloping algebra.
\newblock {\em Duke Math. J.}, 73(2):383--413, 1994.

\bibitem{Kas02}
Masaki Kashiwara.
\newblock On level-zero representations of quantized affine algebras.
\newblock {\em Duke Math. J.}, 112(1):117--175, 2002.

\bibitem{Kas05}
Masaki Kashiwara.
\newblock Level zero fundamental representations over quantized affine algebras
  and {D}emazure modules.
\newblock {\em Publ. Res. Inst. Math. Sci.}, 41(1):223--250, 2005.

\bibitem{Kum02}
Shrawan Kumar.
\newblock {\em Kac-{M}oody groups, their flag varieties and representation
  theory}, volume 204 of {\em Progress in Mathematics}.
\newblock Birkh\"auser Boston, Inc., Boston, MA, 2002.

\bibitem{LNSSS1}
Cristian Lenart, Satoshi Naito, Daisuke Sagaki, Anne Schilling, and Mark
  Shimozono.
\newblock A uniform model for {K}irillov-{R}eshetikhin crystals {I}.
\newblock {\em Int. Math. Res. Notices}, 2015:1848--1901, 2015.

\bibitem{LNSSS2}
Cristian Lenart, Satoshi Naito, Daisuke Sagaki, Anne Schilling, and Mark
  Shimozono.
\newblock A uniform model for {K}irillov-{R}eshetikhin crystals {II}. {A}lcove
  model, path model, and {P}={X}.
\newblock {\em Int. Math. Res. Notices}, 14:4259--4319, 2017.

\bibitem{LNSSS3}
Cristian Lenart, Satoshi Naito, Daisuke Sagaki, Anne Schilling, and Mark
  Shimozono.
\newblock A uniform model for {K}irillov-{R}eshetikhin crystals {III}:
  {N}onsymmetric {M}acdonald polynomials at $t=0$ and {D}emazure characters.
\newblock {\em Transform. Groups}, 22:1041--1079, 2017.

\bibitem{NNS}
Satoshi Naito, Fumihiko Nomoto, and Daisuke Sagaki.
\newblock An explicit formula for the specialization of nonsymmetric macdonald
  polynomials at $t = \infty$.
\newblock {\em to appear in Trans. Amer. Math. Soc.}, 2015.

\bibitem{NS07}
Satoshi Naito and Daisuke Sagaki.
\newblock Crystal structure on the set of {Lakshmibai---Seshadri} paths of an
  arbitrary {level-zero} shape.
\newblock {\em Proceedings of the London Mathematical Society}, 96(3):582,
  2007.

\bibitem{NS14}
Satoshi Naito and Daisuke Sagaki.
\newblock Demazure submodules of level-zero extremal weight modules and
  specializations of {M}acdonald polynomials.
\newblock {\em Math. Z.}, 283:937--978, 2016.

\bibitem{Nao12}
Katsuyuki Naoi.
\newblock Weyl modules, {D}emazure modules and finite crystals for non-simply
  laced type.
\newblock {\em Adv. Math.}, 229(2):875--934, 2012.

\bibitem{San00}
Yasmine~B. Sanderson.
\newblock On the connection between {M}acdonald polynomials and {D}emazure
  characters.
\newblock {\em J. Algebraic Combin.}, 11(3):269--275, 2000.

\end{thebibliography}
\bibliographystyle{plain}}
\end{document}